\newtheorem{thm}{Theorem}[section]
\newtheorem{lem}[thm]{Lemma}
\newtheorem{pro}[thm]{Proposition}
\newtheorem{defi}[thm]{Definition}
\newtheorem{proA}{Proposition}
\newtheoremstyle{rem}{10pt}{10pt}{\rmfamily}{}{\bfseries}{.}{.5em}{} 
\theoremstyle{rem}
\newtheorem{rem}[thm]{Remark}     
\begin{document}
	\title{Almost sure scattering for the  nonlinear Klein-Gordon equations with  Sobolev critical power}
	\author{Jie Chen,\qquad Baoxiang Wang}
	\date{}
	\maketitle
	
	\begin{abstract}
		In this paper, we study the almost sure scattering for the Klein-Gordon equations with Sobolev critical power. We obtain the almost sure scattering  with random initial data in $H^s\times H^{s-1}, \frac{11}{12}<s<1$ for $d=4$, $\frac{15}{16}<s<1$ for $d=5$. We use the	induction on scales and bushes argument in \cite{Bringmann1} where the model equation is wave equation. For $d=5$, we use the mass term of the Klein-Gordon equation to obtain the control of the increment of energy in the process of induction on scales. 
	\end{abstract}
	\section{Introduction}
	We consider the Cauchy problem for the defocusing nonlinear Klein-Gordon equations with Sobolev critical power in space dimensions $d = 4,5$ with real intial data. $\mathcal{H}^s:= H^{s}\times H^{s-1}$.
	\begin{equation}\label{aim0}
		\left\{
		\begin{array}{l}
			u_{tt} - \Delta u + u + u^\frac{d+2}{d-2} = 0, \quad (t,x)\in \mathbb{R}\times \mathbb{R}^d;\\
			(u,u_t)\big|_{t=0} = (u_0,u_1)\in \mathcal{H}^s(\mathbb{R}^d).
		\end{array}
		\right.
	\end{equation}

	Sufficiently regular solutions of (\ref{aim0}) conserve the energy
	\begin{equation*}
		E(u(t), u_t(t)) := \int_{\mathbb{R}^d} \frac{1}{2}u_t(t)^2+\frac{1}{2} |\nabla u(t)|^2+\frac{1}{2}u(t)^2+\frac{d-2}{2d}u(t)^\frac{2d}{d-2}dx.
	\end{equation*}
	We call equation (\ref{aim0}) with Sobolev critical power since the nonlinear term $\int_{\mathbb{R}^d} u^{\frac{2d}{d-2}}dx$ can be controlled by  $\int_{\mathbb{R}^d}|\nabla u|^2 dx$ due to Sobolev embedding inequality.
	
	The Cauchy problem for equation (\ref{aim0}) with initial data in the energy space $\mathcal{H}^1$ is well-understood. The properties of equation (\ref{aim0}) resemble the energy critical wave equation. There exists a vast body of literatrue related to the energy critical wave equation. For the global existence of (\ref{aim0}), see \cite{Grillakis1992Regularity}, \cite{Kapitanski}, \cite{1994Well}.  By the Bourgain's energy induction argument \cite{Bourgaine}, in \cite{NakanishiKG}, Nakanishi established the relative complete results in energy space 
	considering the global time-space bounds and scattering.
	
	\begin{thm}[Nakanishi \cite{NakanishiKG}]\label{Nakanishi}
		For $d = 4, 5$, given $(u_0,u_1)\in \mathcal{H}^1$, 
		There exists a unique global solution $(u,u_t)\in C(\mathbb{R}, \mathcal{H}^1)$, $u\in L^\frac{d+2}{d-2}(\mathbb{R}, L^\frac{2(d+2)}{d-2})$ of equation (\ref{aim0}), and
		\begin{equation*}
			\|u\|_{L^\frac{d+2}{d-2}(\mathbb{R}, L^\frac{2(d+2)}{d-2})}\leq C(E(u_0,u_1)).
		\end{equation*}
		$u$ scatters to a solution of the linear Klein-Gordon equation. It means that there exists $(u_0^{\pm}, u_1^{\pm})\in \mathcal{H}^1$ such that
		\begin{equation*}
			\lim_{t\rightarrow \pm \infty}\|(u(t),u_t(t))-K(t)(u_0^\pm,u_1^\pm)\|_{\mathcal{H}^1}  = 0.
		\end{equation*}
		Here, $K(t)(u_0^\pm,u_1^\pm) = (\pi_1 K(t)(u_0^\pm,u_1^\pm), \partial_t(\pi_1 K(t)(u_0^\pm,u_1^\pm)))$, and
		\begin{align*}
			\pi_1 K(t)(u_0^\pm,u_1^\pm) = \cos(t\langle \nabla \rangle)u_0^\pm + \frac{\sin (t\langle \nabla \rangle)}{\langle \nabla \rangle}u_1^\pm.
		\end{align*}
	\end{thm}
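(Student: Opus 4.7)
I would reduce the theorem to the uniform space-time bound $\|u\|_{L^{(d+2)/(d-2)}_t L^{2(d+2)/(d-2)}_x(\mathbb{R})} \le C(E)$; global existence, uniqueness, and scattering all follow from standard Strichartz theory once this bound is in hand. First, I establish Strichartz estimates for the Klein-Gordon propagator $K(t)$ via dyadic decomposition and stationary phase, set up a contraction mapping in a Strichartz space to obtain local well-posedness together with small-data scattering, and use conservation of $E(u,u_t)$, derived by multiplying the equation by $u_t$ and integrating by parts, to control $\|(u,u_t)\|_{\mathcal{H}^1}$ on the maximal interval. Given the uniform space-time bound, $\mathbb{R}$ splits into finitely many pieces on which a long-time perturbation argument applies, and the Duhamel remainder $\int_t^\infty K(t-s)\,u(s)^{(d+2)/(d-2)}\,ds$ is Cauchy in $\mathcal{H}^1$ as $t\to\pm\infty$, producing $(u_0^\pm,u_1^\pm)$.

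\textbf{Induction on energy.} The uniform bound itself is the core of the argument, to be obtained by Bourgain's induction on energy. Let $M(E)$ denote the supremum of $\|u\|_{L^{(d+2)/(d-2)}_t L^{2(d+2)/(d-2)}_x}$ over solutions with energy at most $E$ and set $E^\ast := \sup\{E : M(E)<\infty\}$; small-data theory gives $E^\ast>0$, and the goal is $E^\ast = +\infty$. Argue by contradiction: if $E^\ast<\infty$, pick a sequence of solutions $u_n$ with energies $E_n\nearrow E^\ast$ and space-time norms $\to\infty$. A Morawetz identity for Klein-Gordon, obtained from a radially weighted multiplier of the form $\partial_r u+(d-1)u/(2r)$, provides an integrated decay estimate; combined with the $S$-norm blowup of $u_n$, this forces concentration of energy at small spatial scales $\lambda_n\to 0$ around spacetime points $(t_n,x_n)$.

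\textbf{Main obstacle.} The delicate step is exploiting this concentration. Rescaling $u_n$ by $\lambda_n$ makes the mass term $u$ lower-order relative to $\Delta u$ and $u_{tt}$, so the rescaled sequence converges (weakly or in a suitable profile sense) to a nontrivial finite-energy solution $v$ of the energy-critical defocusing wave equation $v_{tt}-\Delta v+v^{(d+2)/(d-2)}=0$ carrying a universal energy $\eta_0>0$. Invoking the critical wave scattering result of Grillakis/Shatah--Struwe gives a finite space-time norm for $v$; a profile-decomposition argument then subtracts this bubble from $u_n$, leaving a remainder of energy at most $E^\ast-\eta_0$ whose $S$-norm is bounded by the inductive hypothesis, and perturbation theory contradicts $\|u_n\|_S\to\infty$. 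The hard part is handling the mass through rescaling --- showing that the limiting profile really solves the massless wave equation and that a universal chunk of energy survives the extraction --- which is where most of the technical work of \cite{NakanishiKG} is concentrated.
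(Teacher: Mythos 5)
The paper does not prove Theorem~\ref{Nakanishi}; it is stated with the attribution [Nakanishi \cite{NakanishiKG}] and used as a black box later on (for instance inside the proofs of Propositions \ref{global existence} and \ref{boundenergyimplyscatter}), so there is no in-paper argument against which to check your sketch. That said, your outline is consistent with the one-line description the paper gives of Nakanishi's method (``By the Bourgain's energy induction argument \cite{Bourgaine}, in \cite{NakanishiKG}, Nakanishi established\ldots''): Strichartz-based local theory and small-data scattering, energy conservation to control the $\mathcal{H}^1$ norm, a Klein--Gordon Morawetz estimate forcing small-scale concentration, and a rescaling under which the mass term becomes lower order so the limiting profile obeys the massless energy-critical wave equation, whose known scattering theory lets one remove a bubble of definite energy. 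Two calibration remarks. First, Nakanishi's 1999 argument predates the Bahouri--G\'erard/Kenig--Merle profile-decomposition formalism and follows Bourgain's original ``isolate one bubble'' induction more directly; describing the extraction step as a profile decomposition is a slight anachronism in form, though not in substance. Second, the paper's own Remark after Theorem~\ref{Nakanishi} points out that for general $d\ge 3$ Nakanishi's global bound is stated in certain Besov spaces and that the clean $L^{(d+2)/(d-2)}_tL^{2(d+2)/(d-2)}_x$ bound used here is recovered for $d=3,4,5$ via Strichartz; your proposal takes the latter form for granted, which is fine for $d=4,5$ but worth flagging as a small extra step. Since the present paper only cites the result, there is no gap to report.
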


	\begin{rem}
		In \cite{NakanishiKG}, the space dimensions are $d\geq 3$. The global time-space bounds are related to some Besov spaces in Proposition 5.1 \cite{NakanishiKG}. For $d = 3, 4, 5$, as claimed in \cite{NakanishiKG}, we can obtain the global $L^\frac{d+2}{d-2}_t L^\frac{2(d+2)}{d-2}_x$ bounds by Strichartz estimates.
	\end{rem}

	\begin{rem}
		In \cite{Christ2003Ill}, Christ-Colliander-Tao showed that the energy critical wave equations exhibit norm inflation. As claimed in \cite{Christ2003Ill}, for the examples constructed for wave equation, the mass term plays no significant role after rescaling since the examples are ``high-frequency". Suppose $s<1$. 
		By Theorem 6, 8 in \cite{Christ2003Ill}, given $\varepsilon>0$, there exists Schwartz functions $u_0$, $\|u_0\|_{{H}^s}<\varepsilon$, for some $0<t<\varepsilon$, such that the solution of (\ref{aim0}) with intial data $(u_0,0)$ satisfies $\|(u(t),u_t(t))\|_{\mathcal{H}^s}>\frac{1}{\varepsilon}$ . By finite speed of propagation, there exists $u_0\in C^\infty\cap H^s$, such that the corresponding solution $u(t,x)\in C^\infty([0,1]\times \mathbb{R}^d)$ with initial data $(u_0,0)$ satisfies $$\|u(t)\|_{H^s} = \infty, ~\forall ~t\in(0,1].$$
	\end{rem}

	Although the nonlinear Klein-Gordon equation (\ref{aim0}) is ill-posed in $\mathcal{H}^s$ for $s<1$, it is sometimes possible to construct ``unique" solutions by randomizing the initial data. The study of dispersive partial differential equations via a probabilistic approach was initiated by Bourgain
	\cite{bourgain1994,bourgain1996invariant} for the nonlinear Schr\"{o}dinger equation on $\mathbb{T}$ in dimensions $1$ and $2$. Then, Burq-Tzvetkov \cite{burq2008random1,burq2008random2} explored such problems in the context of the cubic nonlinear wave equation on a $3D$ compact Riemannian manifold. There exists a  vast body of literature where probabilistic tools are used to study nonlinear dispersive equations in scaling super-critical regimes. See \cite{BOP1,BOP4,BOP2,BOP3,OP1,pocovnicu2017probabilistic,B_nyi_2019,chen2020random} and references therein. For the scattering results, see \cite{DLM,killip2019almost,dodson2019almost,Bringmann_2020}. For the long time behavior in the context of fucusing wave equation, see \cite{kenig2019focusing}. See also \cite{ZhangFang} for the similar problem in the context of the dissipative equation. 
	
	\subsection{Randomization procedure}  

	
	We use the randomization based on the the uniform decomposition of the phase space. The randomization is similar to the one used in Bringmann \cite{Bringmann1} which was referred to as microlocal randomization.
	
	Let $\varphi\in C^\infty_0(\mathbb{R}^d), ~0\leq \varphi \leq 1$ satisfying
	\begin{equation*}
		\varphi|_{[-\frac{1}{4},\frac{1}{4}]^d} \equiv 1 ,\quad \varphi|_{([-1,1]^d)^c} \equiv 0, \quad \varphi_k(\xi) := \varphi(\xi-k),\quad\sum\limits_{k\in \mathbb{Z}^d}\varphi_k = 1.
	\end{equation*}
	Then, define the Fourier multiplier operators $P_k = \mathscr{F}^{-1}\varphi_k \mathscr{F}, k \in \mathbb{Z}^d$, where we use
	$\mathscr{F} f(\xi):=\hat{f}(\xi) = \int_{\mathbb{R}^d} f(x) e^{-ix\cdot \xi} dx$.
	
	We use two sequences of independent, real valued, mean zero, uniformly sub-Gaussian random variables $\{X_k\}, \{Y_l\}$ to randomize the space and frequence separately. See subsection \ref{randommmmm} for more descriptions. Suppose $\{X_k\}, \{Y_l\}$ random variables on probability spaces $\Omega_1, \Omega_2$ satisfy the above assumptions.
	For $(f,g)\in H^s\times H^{s-1}$, 	$\omega = (\omega_1,\omega_2)\in \Omega_1\times \Omega_2:=\Omega$. Define
	\begin{equation}\label{initialdatacondition}
		f^\omega = \sum_{k,l\in \mathbb{Z}^d} X_{k}(\omega_1) P_k(Y_l(\omega_2)\varphi_l f),~g^\omega = \sum_{k,l\in \mathbb{Z}^d} X_{k}(\omega_1) P_k({Y}_l(\omega_2)\varphi_l g),
	\end{equation}


	\begin{rem}
		In \cite{BOP1,BOP4,BOP2,BOP3,OP1,pocovnicu2017probabilistic,DLM,killip2019almost,dodson2019almost}, the randomization considered there is based on the uniform decomposition of 
		the frequency space, 
		$$f^\omega = \sum_{k\in \mathbb{Z}^d}X_k(\omega)P_kf.$$
		This randomization was referred to as ``Wiener randomization" in \cite{BOP1, BOP4,OP1}. This terminology was 
		closely related to
		the modulation spaces introduced by H. Feichtinger \cite{Feitingermodulation}. 
		In \cite{WangZG}, Wang-Zhao-Guo first applied the frequency uniform decomposition
		operators $P_k$ to study nonlinear evolution equations.
		See \cite{WangHHG} for more explanations of the frequency uniform decomposition techniques. 
		Recently, in \cite{Chen_2020},
		Chen-Wang-Wang-Wong first applied the uniform decomposition of the phase space to study dissipative nonlinear evolution pseudo-differential equations. The decomposition for a function $f$ in \cite{Chen_2020} is
		$$f = \sum_{k,l\in\mathbb{Z}^d}\varphi_l P_k f.$$
		It is different from the decomposition in this paper where we decompose the physical  space $\mathbb{R}^d_x$ first. However, both decompositions are based on the uniform decomposition of the phase space.
	\end{rem}
	
	\begin{rem}
		The more general assumptions on $\{X_k\}, \{Y_l\}$ are mean zero, independent, symmetric, uniformly sub-Gaussian. In \cite{Bringmann1}, Bringmann randomizes phase-space blocks $\{P_k(\varphi_l f)\}$ by $X_{k,l}(\omega)$. The essential use of the randomization there is to consider the randomization of space and frequency separately. Thus, we can extend the results in this paper to the random variables considered in \cite{Bringmann1}  by the similar argument.
	\end{rem}
	
	\subsection{Main result}
	We use the ideas of Bringmann in \cite{Bringmann1} to obtain the corresponding results for the nonlinear Klein-Gordon equations with Sobolev critical power. For $(f^\omega,g^\omega)$ defined by (\ref{initialdatacondition}), consider the Cauchy problem
	\begin{equation}\label{aim}
		\left\{
		\begin{array}{l}
			u_{tt} - \Delta u + u + u^\frac{d+2}{d-2} = 0, \quad (t,x)\in \mathbb{R}\times \mathbb{R}^d;\\
			u(0) = f^\omega, \quad u_t(0) = g^\omega.
		\end{array}
		\right.
	\end{equation}

	Recall the definition of $K(t)$ in Theorem \ref{Nakanishi}. The main result in this paper is :	
	\begin{thm}\label{finallll}
	For $d=4,\frac{11}{12}<s<1$ or $d = 5, \frac{15}{16}<s<1$,
	$(f,g)\in \mathcal{H}^s$  real.  Then, for $\omega \in \Omega, ~ a.e$, there exists a unique global solution $u$ of $(\ref{aim})$ $s.t$. 
	\begin{equation*}
		(u(t),u_t(t))- K(t)(f^\omega,g^\omega)\in C(\mathbb{R},\mathcal{H}^1), \quad u\in L^\frac{d+2}{d-2}(\mathbb{R}, L^\frac{2(d+2)}{d-2}).
	\end{equation*}
	\begin{equation*}
		u(t) = \cos(t\langle \nabla \rangle)f^\omega+\frac{\sin t\langle \nabla \rangle}{\langle \nabla \rangle}g^\omega - \int_{0}^{t}\frac{\sin(t-s)\langle \nabla \rangle}{\langle \nabla \rangle}|u|^\frac{4}{d-2}u(s)ds
	\end{equation*}
	There exists $(u^\infty_0,u^\infty_1)\in \mathcal{H}^1$ such that
	\begin{equation*}
		\lim_{t\rightarrow \infty}\|(u(t),u_t(t))-K(t)(f^\omega,g^\omega)-K(t)(u_0^\infty,u_1^\infty)\|_{\mathcal{H}^1} = 0.
	\end{equation*}
	The similar statement also holds as $t\rightarrow -\infty$.
    \end{thm}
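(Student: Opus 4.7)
The plan is to use the Da Prato--Debussche style decomposition $u = F + v$, where $F(t) = \pi_1 K(t)(f^\omega,g^\omega)$ is the free Klein--Gordon evolution of the randomized data, and $v$ is the nonlinear remainder, which solves
\begin{equation*}
v_{tt}-\Delta v + v + (v+F)^{(d+2)/(d-2)} = 0,\qquad (v,v_t)\big|_{t=0}=(0,0).
\end{equation*}
By Nakanishi's Theorem~\ref{Nakanishi}, once one establishes an a priori bound $(v,v_t)\in C(\mathbb{R},\mathcal{H}^1)$ together with $u\in L^{(d+2)/(d-2)}_t L^{2(d+2)/(d-2)}_x$ almost surely, the deterministic theory in $\mathcal{H}^1$ immediately extends $v$ globally and provides scattering, and the conclusions of Theorem~\ref{finallll} follow directly from the Duhamel formulation. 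The entire task therefore reduces to that almost-sure a priori bound.

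The first ingredient is a family of probabilistic Strichartz estimates for $F$. Because the microlocal randomization~(\ref{initialdatacondition}) randomizes spatial and frequency blocks independently, a Khintchine-type large-deviation bound for sub-Gaussian sums, applied separately in $\omega_1$ and $\omega_2$ and combined with the usual Strichartz estimate on each phase-space atom $P_k(\varphi_l f)$, gives, off an exceptional set of exponentially small probability, space-time control of $F$ in a range of $L^q_t L^r_x$ norms at a level of regularity better than the deterministic $H^s$ theory would predict. In practice this buys $\varepsilon$-many extra derivatives on $F$ relative to the linear limit, letting the nonlinear analysis treat $F$ essentially as if it lived in $\mathcal{H}^1$ but with an additional smallness gain at high frequency.

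The core step is a quantitative energy estimate for $v$. Differentiating
\begin{equation*}
E_v(t)=\int_{\mathbb{R}^d}\tfrac12 v_t^2+\tfrac12|\nabla v|^2+\tfrac12 v^2+\tfrac{d-2}{2d}|v|^{2d/(d-2)}\,dx
\end{equation*}
and using the equation, $\dot E_v$ reduces to forcing integrals involving products of $v_t$, powers of $v$ and at least one factor of $F$, of total homogeneity $(d+2)/(d-2)$. The main obstacle is that in the energy-critical regime these terms are genuinely borderline: a single application of H\"older together with the probabilistic Strichartz bound on $F$ falls just short of closing a Gronwall-type inequality. To overcome this I would implement the induction-on-scales and bushes argument of~\cite{Bringmann1}. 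One dyadically localizes $F=\sum_N F_N$, then further decomposes each $F_N$ into unit-scale spatial wavepackets, and groups these packets into bushes whose propagation tubes focus near a common space-time point. Inside a bush the independence of the coefficients $X_k,Y_l$ yields near-orthogonality and a square-function gain; between bushes one exploits geometric transversality. Inductively summing from low to high $N$ converts $\int_0^T\dot E_v\,dt$ into a bound of the form $C(\omega)+\delta\,E_v(T)^{\theta}$ with $\theta<1$ and $\delta$ arbitrarily small, which closes by a continuity-in-$T$ bootstrap.

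The novel ingredient for Klein--Gordon, compared with the pure wave case treated in~\cite{Bringmann1}, is the mass. In $d=5$ the wave induction just misses closing in the desired range of $s$; the mass gap supplies the missing decay because the Duhamel kernel is $\sin((t-s)\langle\nabla\rangle)/\langle\nabla\rangle$ rather than $\sin((t-s)|\nabla|)/|\nabla|$, and behaves like $\min(|\nabla|^{-1},1)$, gaining an $\langle N\rangle^{-1}$ factor on low-frequency Littlewood--Paley pieces. Feeding this into the bush-level forcing estimates recovers the required $\varepsilon$-room and lets the induction close for $s>\tfrac{15}{16}$; for $d=4$ the cubic structure of the nonlinearity gives enough slack to close already for $s>\tfrac{11}{12}$. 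Once the a priori energy bound and the critical space-time bound on $u$ are in hand, scattering is routine: the Duhamel tail $\int_t^\infty \sin((t-s)\langle\nabla\rangle)/\langle\nabla\rangle\,|u|^{4/(d-2)}u\,ds$ converges in $\mathcal{H}^1$ as $t\to\pm\infty$ because $\|u\|_{L^{(d+2)/(d-2)}_t L^{2(d+2)/(d-2)}_x}$ is finite, which yields the asymptotic profile $(u_0^\infty,u_1^\infty)$ and completes the proof.
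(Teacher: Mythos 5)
Your proposal captures the right high-level architecture (Da Prato--Debussche decomposition, probabilistic Strichartz bounds, the wave-packet / bush induction of~\cite{Bringmann1}, and a special role for the mass in $d=5$), but there is a genuine gap in the way you set up the decomposition, and it breaks the argument. You take $F=\pi_1 K(t)(f^\omega,g^\omega)$ to be the \emph{full} free evolution and start $v$ from zero data. The induction-on-scales argument needs $F$ to be small in several norms --- in the paper these appear as the conditions (ii)--(iv) of Proposition~\ref{uniformenergybyinduction}, all with a small constant $\eta$ that must be chosen before the induction can close. The full free evolution is \emph{not} small: only the high-frequency tail $F^\omega_{\geq N}=\pi_1 K(t)(f^\omega_{\geq N},g^\omega_{\geq N})$ becomes small, with $N=N(\omega)$ chosen $\omega$-dependently after the probabilistic estimates. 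One therefore has to keep the low frequencies $f^\omega_{<N}\in\mathcal{H}^1$ as the initial data for $v$. Your claim that one works ``off an exceptional set of exponentially small probability'' does not substitute for this: a.e.\ $\omega$ will lie inside \emph{some} exceptional set for a fixed threshold, and the remedy is precisely the random cutoff $N(\omega)$, which you have not introduced.

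A second, related gap is that your closure ``$\int_0^T\dot E_v\,dt \leq C(\omega)+\delta E_v(T)^\theta$, $\theta<1$'' is not how the energy is controlled here, and a global inequality of that form is not available. The paper controls a \emph{local} energy $\tilde{\mathcal{E}}^N_{t_0,x_0}$ on truncated backward light cones $K^N_{t_0,x_0}$, together with a companion nonlinear-flux quantity $\tilde{\mathcal{F}}^{N,\delta}_{t_0,x_0}$ (cf.\ Lemma~\ref{estimateet0x0N}); the induction is over the cone scale $N$, and the global energy bound is extracted only at the end by letting $N\to\infty$. The $\tilde{\mathcal F}$ quantity is indispensable because condition (iv) on $F$ produces a term $\eta M^{-\alpha}(\tilde{\mathcal E}+\tilde{\mathcal F})$ that cannot be absorbed by $\tilde{\mathcal E}$ alone. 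Finally, the specific mechanism by which the mass term helps in $d=5$ is not primarily the $\langle\nabla\rangle^{-1}$ smoothing in Duhamel's formula, but the extra $\tfrac12\int u^2$ in the Klein--Gordon energy: it gives an $L^\infty_t L^2_x$ bound on $w$, which enters the estimate (iii) for small scales $M<N$ through the factor $\|w\|^{(d-4)/2}_{L^\infty_t L^2_x}$ (exponent $1/2$ when $d=5$, but $0$ when $d=4$), a bound that would simply not be available for the pure wave energy. Without that, the numerology $s>\tfrac{15}{16}$ does not close.
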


	By the norm inflation phenomenon for the initial data in $\mathcal{H}^s, ~s<1$, we can not solve the equation (\ref{aim0}) by iteration directly. The strategy to overcome this difficulty  is to use the  Da Prato-Debussche trick \cite{Dapratotrick}, which was used generally for this kind of problem. Let $F_{\geq N}^\omega(t) := \pi_1 K(t)(f_{\geq N}^\omega, g_{\geq N}^\omega)$. Then, for $v(t) = u(t) - F_{\geq N}^\omega(t)$, $v$ satisfies the equation
	\begin{equation}\label{equationsearch}
		\left\{
		\begin{array}{ll}
			v_{tt}-\Delta v + v + (v + F_{\geq N}^\omega)^\frac{d+2}{d-2} = 0, \quad (t,x)\in \mathbb{R}\times \mathbb{R}^d;\\
			v(0) = f^\omega_{<N}, \quad v_t(0) = g^\omega_{<N}.
		\end{array}
		\right.
	\end{equation}
	This equation is, essentially, a perturbation of the equation (\ref{aim0})
	 . The main term of this perturbation is $v^\frac{4}{d-2}F_{\geq N}^\omega$. Thus, the proof of Theorem \ref{finallll} is decomposed into two parts. 
	\begin{itemize}
		\item Suppose some conditions on $F (F_{\geq N}^\omega)$ such that we can obtain the local, global and scattered solution of the equation (\ref{equationsearch}).
		\item For some $s<1$, $N=N(\omega)<\infty, a.e.$,  we can show that $F_{\geq N}^\omega$ satisfies the conditions appeared in the first part.
	\end{itemize}

	For the first part, in subsection \ref{localexistenceeee}, \ref{globalexistenceeee}, we use perturbation argument to obtain the local and global existence by assuming some time-space integrability conditions on $F$. The argument is classical. To obtain the scattering result in subsection \ref{scatteringsection}, we assume some conditions originated in \cite{Bringmann1}. These conditions that appear unnaturally are intended for the induction on scales argument. 
	
	For the second part, we use probabilistic Strichartz estimates. The free evolution of the random data satisfies significantly improved time-space integrability properties. In subsection \ref{alomostsurelocalglobal}, we show the almost sure global existence for $s>0$. To obtain the scattering result in subsection \ref{Almostsurescattering}, we use the wave packet decomposition from \cite{Bringmann1}. We put the proof of the almost sure finiteness of the key wave packet estimates in Appendix.
	
	In subsection 2.1, we show some basic almost orthogonal estimates of phase space localization. Then, we recall the decay estimates and related Strichartz estimates related to the linear Klein-Gordon equation in subsection 2.2. In subsection 2.3, we recall a few basic estimates for sub-Gaussian random variables. In section 3, 4, we proceed as previously described.
	\vspace{5pt}
	
	\textbf{Some notations:} $\langle x \rangle:= (1+|x|^2)^\frac{1}{2}$, and similarly $\langle \nabla \rangle = \mathscr{F}^{-1}\langle \xi \rangle \mathscr{F}$.
	
	Let $\chi_0 \in C_0^\infty(\mathbb{R}^d)$ satisfying 
	$0\leq \chi_0\leq 1,~ \chi_0|_{B(0,1)}\equiv 1,~ \chi_0|_{B^c(0,2)}\equiv 0$.
	Then, define
	$\psi_1 = \chi_0, \quad \psi_N(\xi) = \chi_0(\xi/N)-\chi_0(2\xi/N),  ~\forall~N\in 2^{\mathbb{N}}$.
	Denote $P_N = \mathscr{F}^{-1}\psi_N \mathscr{F},~ N\in 2^{\mathbb{N}_0},~\mathbb{N}_0 = \mathbb{Z}^+\cup\{0\}$. 
	We also need the enlarged dyadic decomposition and uniform decomposition defined as 
	$$\tilde{P}_{1} := P_1+P_2, \quad \tilde{P}_N := P_{{N}/{2}}+P_N+P_{2N}, ~N\in 2^{\mathbb{N}},$$ 
	$$\tilde{P}_{k}:= \sum_{|\tilde{k}-k|_{\infty}\leq 1}P_{\tilde{k}}, ~k\in \mathbb{Z}^d.$$
	For $f\in \mathcal{S}'$, denote $f_{\geq N} = \sum_{M\geq N} P_Mf$, $f_{<N} = \sum_{M<N}P_M f$, and $f_N=P_N f$. We usually use $M, N$ to represent the dyadic numbers larger than or equal to $1$, and use $k,l$ to reprensent the elements in $\mathbb{Z}^d$.
	
	We denote 
	$$S(I):= L^\frac{d+2}{d-2}_t(I, L^\frac{2(d+2)}{d-2}_x).$$ 
	This norm is referred to as the Strichartz norm. $f\in L^q_{t,loc}(I,L_x^r)$ means $f\in L_t^q(J, L_x^r), ~\forall~J\subset\subset I$, similarly for $S_{loc}(I)$. For $u_0\in H^1, u_1\in L^2$, define
	\begin{align*}
		E(u_0,u_1) = \int_{\mathbb{R}^d} \frac{1}{2}u_1^2+\frac{1}{2}|\nabla u_0|^2+\frac{1}{2} u_0^2+\frac{d-2}{2d}u_0^\frac{2d}{d-2}dx.
	\end{align*}
	 	 
	 \section{Linear estimates and probabilistic inequalities}
	 \subsection{Almost orthogonal estimates}
	   	 \begin{lem}[Lemma 2.8 in \cite{Bringmann1}, Lemma 2.1 in \cite{Chen_2020}]\label{mismatch}
	 	Let $1\leq p,q\leq \infty$. Given $\phi, h\in \mathcal{S}$, denote $\phi_k(x) = \phi(x-k)$, $h_l(x) = h(x-l)$. For $k,k',l,l'\in \mathbb{Z}^d$, define
	 	\begin{align*}
	 		T_{k,l,l'} f &= h_l\mathscr{F}^{-1}(\phi_k\mathscr{F}(h_{l'}f)),& f\in \mathcal{S}',\\
	 		\tilde{T}_{k,k',l} f &= \mathscr{F}^{-1}(\phi_k\mathscr{F}(h_{l}\mathscr{F}^{-1} (\phi_{k'}\hat{f}))),&f\in \mathcal{S}'.
	 	\end{align*}
	 	Then, for any $L\in \mathbb{N}$, we have
	 	\begin{align}
	 		\|T_{k,l,l'}f\|_{L^q}&\lesssim_{\phi,h,L}\langle l-l'\rangle^{-L}\|f\|_{L^p}, \label{cutfirst}\\
	 		\|T_{k,k',l}f\|_{L^q}&\lesssim_{\phi,h,L}\langle k-k'\rangle^{-L}\|f\|_{L^p}. \label{fouriercutfirst}
	 	\end{align}
	 \end{lem}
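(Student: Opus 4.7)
The plan is to prove both estimates by a direct integral-kernel computation. Three ingredients suffice: the explicit identity $\mathscr{F}^{-1}\phi_k(w)=e^{ik\cdot w}\check\phi(w)$ with $\check\phi:=\mathscr{F}^{-1}\phi\in\mathcal{S}$ (so that $|\mathscr{F}^{-1}\phi_k|$ is a Schwartz function bounded uniformly in $k$); the elementary inequality $\langle l-l'\rangle\lesssim\langle x-l\rangle\langle x-y\rangle\langle y-l'\rangle$ coming from $|l-l'|\le|l-x|+|x-y|+|y-l'|$; and H\"older's inequality to pass from pointwise kernel bounds to $L^p\to L^q$ bounds via translated Schwartz tails.

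For \eqref{cutfirst}, the kernel of $T_{k,l,l'}$ is $M(x,y)=h(x-l)(\mathscr{F}^{-1}\phi_k)(x-y)h(y-l')$, which satisfies $|M(x,y)|\lesssim_N\langle x-l\rangle^{-N}\langle x-y\rangle^{-N}\langle y-l'\rangle^{-N}$ for every $N$. Taking $N$ large relative to $L$ and distributing the decay via the triangle-inequality bound above, I get
\[
|M(x,y)|\lesssim\langle l-l'\rangle^{-L}\langle x-l\rangle^{-N_0}\langle y-l'\rangle^{-N_0}
\]
with $N_0>d$ at my disposal. Applying H\"older in $y$ against $|f|$ and then taking the $L^q_x$ norm of the remaining Schwartz tail in $x-l$ yields \eqref{cutfirst} for every $1\le p,q\le\infty$, with the implicit constant depending only on $\phi$, $h$, and $L$.

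For \eqref{fouriercutfirst}, the kernel is
\[
\tilde M(x,y)=\int\check\phi_k(x-z)\,h(z-l)\,\check\phi_{k'}(z-y)\,dz=e^{ik\cdot x-ik'\cdot y}\int e^{i(k'-k)\cdot z}\,\check\phi(x-z)h(z-l)\check\phi(z-y)\,dz,
\]
after collecting the modulations from $\check\phi_k$ and $\check\phi_{k'}$. The crucial structural point is that the entire $k-k'$ dependence is now concentrated in a single oscillation $e^{i(k'-k)\cdot z}$ against an amplitude Schwartz in $z$ (uniformly in the other parameters). Integration by parts $L$ times via $(1-\Delta_z)^L e^{i(k'-k)\cdot z}=\langle k-k'\rangle^{2L}e^{i(k'-k)\cdot z}$ pulls out $\langle k-k'\rangle^{-L}$, while the surviving trilinear Schwartz integral
\[
\int\langle x-z\rangle^{-N}\langle z-l\rangle^{-N}\langle z-y\rangle^{-N}\,dz
\]
is dominated (by splitting into regions where one of $|x-z|,|z-l|,|z-y|$ dominates, or equivalently by iterated convolution of Schwartz tails) by $\langle x-l\rangle^{-N'}\langle y-l\rangle^{-N'}\langle x-y\rangle^{-N'}$ for any $N'\le cN$. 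The H\"older argument of the previous paragraph then closes \eqref{fouriercutfirst}.

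The technical heart is the kernel computation for $\tilde T_{k,k',l}$: one has to recognize that the two modulated Schwartz kernels combine into a single phase $e^{i(k'-k)\cdot z}$ whose frequency is exactly $k-k'$, which is precisely what lets integration by parts in $z$ produce the gain $\langle k-k'\rangle^{-L}$; one then has to book-keep the trilinear Schwartz estimate carefully so that enough decay in $x-l$ and $y-l$ remains to run H\"older on all Lebesgue pairs. For $T_{k,l,l'}$ no oscillatory analysis is needed, since the frequency cutoff sits between two spatial bumps and the requisite decay is already built into Schwartz tails of the kernel.
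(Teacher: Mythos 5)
Your argument is correct and is essentially the paper's own proof: you bound the integral kernel of each operator by translated Schwartz tails, peel off the gain in $\langle l-l'\rangle$ (respectively $\langle k-k'\rangle$, via integration by parts against the single collapsed phase $e^{i(k'-k)\cdot z}$), and close with H\"older/Young-type estimates. The only cosmetic difference is that the paper first normalizes $k=0,l'=0$ (respectively $l=0,k'=0$) by translation and modulation before computing, whereas you keep all indices general and observe the same uniformity directly through the identity $\check\phi_k = e^{ik\cdot(\cdot)}\check\phi$; the substance, including the integration-by-parts step in the intermediate variable, is identical.
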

	 \begin{proof}[\textbf{Proof}]
	 	$f(x)\rightsquigarrow f_{l'}(x)e^{ix\cdot k}$, we reduce the inequality (\ref{cutfirst}) to the case $k = 0$, $l' = 0$. 
	 	\begin{align*}
	 		T_{0,l,0}f(x) = h(x-l)\int_{\mathbb{R}^d}\check{\phi}(x-y)h(y)f(y)dy.
	 	\end{align*}
	 	By $h,\phi\in \mathcal{S}$, we have
	 	\begin{align*}
	 		|T_{0,l,0}f(x)| &\lesssim_M \int_{\mathbb{R}^d}\langle x-l\rangle^{-M}\langle x-y\rangle^{-M}\langle y \rangle^{-M}|f(y)|dy\\
	 		&\lesssim_M \langle l\rangle^{-L}\int_{\mathbb{R}^d}\langle x-l\rangle^{-M+L}\langle x-y\rangle^{-M+L}\langle y \rangle^{-M+L}|f(y)|dy.
	 	\end{align*}
	 	Choosing $M > L+d$, by Minkowski inequality and H\"{o}lder inequality, we obtain
	 	\begin{align*}
	 		\|T_{0,l,0}f\|_{L^q}\lesssim_M \langle l \rangle^{-L}\int_{\mathbb{R}^d}\langle y\rangle^{-M+L}|f(y)|dy\lesssim_{M} \langle l \rangle^{-L}\|f\|_{L^p}.
	 	\end{align*}
 		We obtain (\ref{cutfirst}).
 		
	 	Similarly, $f(x)\rightsquigarrow f_{l}(x)e^{ix\cdot k'}$, we reduce the inequality (\ref{fouriercutfirst}) to the case $l = 0$, $k' = 0$.
	 	\begin{align*}
	 		\tilde{T}_{k,0,0}f(x) = e^{ix\cdot k}\int_{\mathbb{R}^d}e^{-iy\cdot k}\check{\phi}(x-y)h(y)\int_{\mathbb{R}^d}\check{\phi}(y-z)f(z)dzdy.
	 	\end{align*}
	 	$(1-\Delta_y)^M e^{-iy\cdot k} = (1+|k|^2)^Me^{-iy\cdot k}$, integrating by parts, we obtain
	 	\begin{align*}
	 		|\tilde{T}_{k,0,0}f(x)|&\lesssim \langle k \rangle^{-2M}\int_{\mathbb{R}^d}|f(z)|dz\int_{\mathbb{R}^d}|(1-\Delta_y)^M (\check{\phi}(x-y)h(y)\check{\phi}(y-z))|dy\\
	 		&\lesssim_L \langle k\rangle^{-2M}\int_{\mathbb{R}^d}|f(z)|dz\int_{\mathbb{R}^d}\langle x-y\rangle^{-M}\langle y \rangle^{-M}\langle y-z\rangle^{-M}dy.
	 	\end{align*}
	 	Choosing $M$ such that $2M>L, M>d$, by Minkowski inequality, H\"{o}lder inequality, we obtain
	 	\begin{align*}
	 		\|\tilde{T}_{k,0,0}f\|_{L^q}&\lesssim \langle k \rangle^{-L} \int_{\mathbb{R}^d}\langle y \rangle^{-M} dy\int_{\mathbb{R}^d}\langle y-z\rangle^{-M}|f(z)|dz\\
	 		&\lesssim \langle k \rangle^{-L}\|f\|_{L^p}.
	 	\end{align*}
	 	Then, by translation and modulation, we have (\ref{fouriercutfirst}).
	 \end{proof}
	 \vspace{5pt}
	 By Lemma \ref{mismatch}, we can show the equivalent norm property of modulation spaces. See also Lemma 2.9 in \cite{Bringmann1}, Proposition 2.2 in \cite{Chen_2020} for similar results.
	 \begin{lem}\label{equivalentnorm}
	 	For $s\in \mathbb{R}$, $1\leq p,q,r\leq \infty$, we have
	 	\begin{equation}\label{equivalentnorkkk}
	 		\|\langle k \rangle^s\|P_k(\varphi_l f)\|_{L^r}\|_{l^q_k l^p_l}\sim \|\langle k\rangle^s \|P_k f\|_{L^p}\|_{l^q_k}.
	 	\end{equation}
	 	Specially, for $p = q = 2$, we have
	 	\begin{equation}\label{mainuseequivalentnorm}
	 		\|f\|_{H^s}\sim \|\langle k \rangle^s\|P_k f\|_{L^2}\|_{l^2}\sim \|\langle k \rangle^s\|P_k(\varphi_l f)\|_{L^r}\|_{l^2_k l^2_l}\sim \|\|\varphi_l f\|_{H^s}\|_{l^2_l} .
	 	\end{equation}
	 \end{lem}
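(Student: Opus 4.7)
The plan is to exploit the fact that $P_k(\varphi_l f)$ is a phase-space atom—localized in frequency near $k$ by the operator $P_k$ and (approximately) localized in space near $l$ by the cutoff $\varphi_l$—and combine this with the almost orthogonality already established in Lemma \ref{mismatch}. Because such atoms are essentially of unit volume in phase space, their $L^r$ and $L^p$ norms are comparable up to constants depending only on $r,p,d$ (via Bernstein applied to the compact Fourier support, after freezing the spatial localization from $\varphi_l$), so one may reduce to the case $r=p$ in spirit; the $r$ on the left-hand side is therefore cosmetic.

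For the direction $\|\langle k\rangle^s\|P_k(\varphi_l f)\|_{L^r}\|_{l^q_k l^p_l}\lesssim \|\langle k\rangle^s\|P_k f\|_{L^p}\|_{l^q_k}$, I would insert the partition of unity $\sum_{k'}\varphi_{k'}(\xi)=1$ on the Fourier side, writing
\begin{equation*}
P_k(\varphi_l f) = \sum_{k'\in \mathbb{Z}^d} P_k(\varphi_l P_{k'} f).
\end{equation*}
Applying Lemma \ref{mismatch}, in the form (\ref{fouriercutfirst}) with the input $P_{k'}f$ and the enlarged multiplier $\tilde P_{k'}$ (which acts as the identity on the Fourier support of $P_{k'}f$), yields
\begin{equation*}
\|P_k(\varphi_l P_{k'} f)\|_{L^r} \lesssim_L \langle k-k'\rangle^{-L}\|\varphi_l P_{k'}f\|_{L^p}.
\end{equation*}
Taking $\ell^p_l$ and using $\sum_l \|\varphi_l g\|_{L^p}^p \sim \|g\|_{L^p}^p$ (which follows from the bounded overlap of $\{\varphi_l\}$) gives $\|\,\|P_k(\varphi_l\cdot)\|_{L^r}\|_{l^p_l}\lesssim\sum_{k'}\langle k-k'\rangle^{-L}\|P_{k'}f\|_{L^p}$. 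Then, because $\langle k\rangle^s\langle k-k'\rangle^{-L}\lesssim \langle k'\rangle^s\langle k-k'\rangle^{-L+|s|}$ and $L$ can be taken arbitrarily large, Young's inequality on $\ell^q_k$ delivers the claimed bound.

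For the reverse direction, use $P_k f = \sum_l P_k(\varphi_l f)$, the Schwartz decay of the kernel of $P_k$, and the triangle/Minkowski inequality to absorb the tails in $l$. Combined again with the $L^p\sim L^r$ equivalence for phase-space atoms and with $\sum_l \|\varphi_l g\|_{L^p}^p\sim \|g\|_{L^p}^p$, this yields $\|P_k f\|_{L^p}\lesssim (\sum_l\|P_k(\varphi_l f)\|_{L^r}^p)^{1/p}$, after which one takes weighted $\ell^q_k$.

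Once (\ref{equivalentnorkkk}) is in hand, specializing to $p=q=2$ gives the middle equivalences in (\ref{mainuseequivalentnorm}); the outer equivalence $\|f\|_{H^s}\sim \|\langle k\rangle^s \|P_k f\|_{L^2}\|_{l^2}$ follows from Plancherel together with the fact that $\{\varphi_k\}$ has bounded overlap on the Fourier side, and $\|\,\|\varphi_l f\|_{H^s}\|_{l^2_l}$ is then identified via the same localization argument applied to $\varphi_l f$. The main technical nuisance, and the one place where care is required, is the comparison of $L^r$ with $L^p$; this is precisely where one needs the joint space-frequency localization of $P_k(\varphi_l f)$ rather than either piece alone.
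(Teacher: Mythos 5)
Your high-level strategy (insert a partition of unity, use the almost-orthogonality of Lemma \ref{mismatch}, close by Young) is the right one, and the specializations to $p=q=2$ at the end are fine. But there is a concrete gap in the main step of the direction $\lesssim$: the estimate you invoke,
\begin{equation*}
\|P_k(\varphi_l P_{k'} f)\|_{L^r}\ \lesssim_L\ \langle k-k'\rangle^{-L}\,\|\varphi_l P_{k'} f\|_{L^p},
\end{equation*}
is \emph{not} what $(\ref{fouriercutfirst})$ gives. Applied with $\phi=h=\varphi$, $(\ref{fouriercutfirst})$ reads $\|P_k(\varphi_l P_{k'}g)\|_{L^r}\lesssim_L\langle k-k'\rangle^{-L}\|g\|_{L^p}$: the cutoff $h_l=\varphi_l$ appears only on the left and the right-hand side is a \emph{global} $L^p$ norm of the input. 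Writing the input as $P_{k'}f$ (or inserting $\tilde P_{k'}$) still only yields $\langle k-k'\rangle^{-L}\|P_{k'}f\|_{L^p}$; it does not produce the spatially localized quantity $\|\varphi_l P_{k'}f\|_{L^p}$. If you then sum in $l$, the right-hand side does not collapse to $\|P_{k'}f\|_{L^p}$ via bounded overlap, so the argument does not close as written.

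The way to recover the missing spatial localization is to insert \emph{both} decompositions, frequency and spatial, writing $f=\sum_{k',l'}P_{k'}(\varphi_{l'}\tilde\varphi_{l'}\tilde P_{k'}f)$, and then using $(\ref{cutfirst})$ together with $(\ref{fouriercutfirst})$ to obtain the joint decay $\langle k-k'\rangle^{-L}\langle l-l'\rangle^{-L}\|\tilde\varphi_{l'}\tilde P_{k'}f\|_{L^p}$; Young's inequality in $(k,l)$ then gives $(\ref{equivalentnorkkk})$. This is exactly what the paper's proof does. Your reverse direction has the analogous issue: after $P_kf=\sum_l P_k(\varphi_l f)$, a bare triangle inequality yields $\ell^1_l$, not $\ell^p_l$, and ``absorb the tails using Schwartz decay of $P_k$'' is a statement of intent rather than an argument. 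Here too, the $\langle l-l'\rangle^{-L}$ decay from Lemma \ref{mismatch} plus the bound $\|\varphi_l\|_{\ell^p_l}\lesssim 1$ is what makes the sum close. Finally, the ``$L^r\sim L^p$ for phase-space atoms'' heuristic you use as a bridge is not needed once both parts of Lemma \ref{mismatch} are used: the freedom to change exponents ($L^r$ to $L^p$) is already built into the lemma because of the post-composed cutoff $h_l$, so no separate Bernstein-type comparison is required.
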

	 \begin{rem}
	 	$\|\langle k\rangle^s \|P_k f\|_{L^p}\|_{l^q_k}$ is the equivalent norm of modulation space $M^s_{p,q}$ (see Feichtinger \cite{Feitingermodulation}). The proof of Lemma \ref{equivalentnorm} is essentially the same as Proposition 2.2 in \cite{Chen_2020} with some minor modifications.
	 \end{rem}
	 \begin{proof}[\textbf{Proof}] $\tilde{\varphi}_l := \sum_{|\tilde{l}-l|_\infty\leq 1}\varphi_{\tilde{l}}$. By $\varphi_l\tilde{\varphi}_l = \varphi_l, \sum_{l}\varphi_l = 1$, we have
	 	$$P_k(\varphi_l f) = \sum_{k',l'}P_k(\varphi_l {P}_{k'}({\varphi}_{l'}\tilde{\varphi}_{l'}\tilde{P}_{k'}f)).$$
	 	By Lemma \ref{mismatch}, we obtain
	 	\begin{align*}
	 		\langle k \rangle^s \|P_k(\varphi_l f)\|_{L^r}&\lesssim_L \sum_{k',l'}\langle k-k'\rangle^{-L}\langle k\rangle^s \langle l-l' \rangle^{-L}\|\tilde{\varphi}_{l'}\tilde{P}_{k'}f\|_{L^p}\\
	 		&\lesssim_L \sum_{k',l'}\langle k-k'\rangle^{-L+|s|} \langle l-l' \rangle^{-L}\langle k'\rangle^s\|\tilde{\varphi}_{l'}\tilde{P}_{k'}f\|_{L^p}.
	 	\end{align*}
	 	Choosing $L>|s|+d$, by Young inequality, we obtain
	 	\begin{align*}
	 		\|\langle k \rangle^s \|P_k(\varphi_l f)\|_{L^r}\|_{l^q_k l^p_l}\lesssim \|\langle k' \rangle^s\|\tilde{\varphi}_{l'}\tilde{P}_{k'}f\|_{L^p}\|_{l^q_{k'} l^p_{l'}}\lesssim \|\langle k \rangle^s \|P_k f\|_{L^p}\|_{l^q_k}.
	 	\end{align*}
	 	
	 	For another part, $\varphi_l P_k f = \sum_{k',l'} \varphi_l P_k({\varphi}_{l'}{P}_{k'}\tilde{P}_{k'}(\tilde{\varphi}_{l'}f))$, thus, we have
	 	\begin{align*}
	 		\|\varphi_l P_k f\|_{L^p}\lesssim \sum_{k',l'} \langle l-l'\rangle^{-L}\langle k-k'\rangle^{-L} \|\tilde{P}_{k'}\tilde{\varphi}_{l'}f\|_{L^r}.
	 	\end{align*}
	 	By Young inequality, $\|\varphi_l\|_{l^p_l}\lesssim 1$, for $L>|s|+d$,
	 	\begin{align*}
	 		\langle k \rangle^s\|P_k f\|_{L^p}&\lesssim \langle k\rangle^s\|\|\varphi_lP_k f\|_{L^p}\|_{l^p_l}\lesssim \sum_{k'}\langle k-k'\rangle^{-L}\langle k\rangle^s\|\|\tilde{P}_{k'}\tilde{\varphi}_{l'}f\|_{L^r}\|_{l^p_{l'}}\\
	 		&\lesssim\sum_{k'}\langle k-k'\rangle^{-L+|s|}\langle k'\rangle^s\|\|\tilde{P}_{k'}\tilde{\varphi}_{l'}f\|_{L^r}\|_{l^p_{l'}}.
	 	\end{align*}
	 	Then,
	 	\begin{align*}
	 		\|\langle k \rangle^s \|P_k f\|_{L^p}\|_{l^q_k}\lesssim\| \langle k'\rangle^s\|\|\tilde{P}_{k'}\tilde{\varphi}_{l'}f\|_{L^r}\|_{l^p_{l'}}\|_{l^q_{k'}}\lesssim \|\langle k\rangle^s\|P_k(\varphi_l f)\|_{L^r}\|_{l^q_k l^p_l}.
	 	\end{align*}
	 	
	 	For $p = q = 2$, by $M_{2,2}^s = H^s$, we have $\|f\|_{H^s}\sim \|f\|_{M_{2,2}^s}$. Then, for $r = 2$, we have $\|f\|_{H^s}\sim \|\langle k \rangle^s\|P_k(\varphi_l f)\|_{L^2}\|_{l^2_k l^2_l}\sim \|\|\varphi_l f\|_{H^s}\|_{l^2_l}$.
	 \end{proof}
	 \subsection{Linear estimates related to the linear Klein-Gordon equation}
	 Recall the decay estimates and
	  Strichartz estimates for the linear Klein-Gordon equation (see, e.g., \cite{Brenner}, \cite{GVdecay}).
	 \begin{lem}
	 		 $N\in 2^{\mathbb{N}_0}$, $f_N = P_N f, ~\forall~f\in \mathcal{S}'$, 
	 	\begin{equation*}
	 		\|e^{\pm it\langle \nabla \rangle}f_N\|_{L^\infty} \lesssim \min\{N^d,N^\frac{d+1}{2}|t|^{-\frac{d-1}{2}},N^\frac{d+2}{2}|t|^{-\frac{d}{2}} \}\|f_N\|_{L^1}.
	 	\end{equation*}
	 	The related Strichartz estimates, for $\frac{2}{q} + \frac{d}{r} \leq \frac{d}{2},~ q,r \geq 2, (q,r,d) \neq (2,\infty,2)$, $(q,r,d)\neq (2,\infty,3)$,
	 	\begin{equation*}
	 		\|e^{\pm it\langle \nabla \rangle}f_N\|_{L_t^qL_x^r}\lesssim \max\{N^{\frac{d}{2}-\frac{1}{q}-\frac{d}{r}},N^{\frac{1}{2}+\frac{1}{q}-\frac{1}{r}} \}\|f_N\|_{L^2}.
	 	\end{equation*}
 		For $(q,r,d) = (2,\infty,3)$, we have
 			$\|e^{\pm it\langle \nabla \rangle}f_N\|_{L_t^2L_x^\infty}\lesssim N\log^\frac{1}{2}(2N)\|f_N\|_{L^2}$.
	 \end{lem}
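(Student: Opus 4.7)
The plan is to pass to Fourier side and write
\[e^{\pm it\langle\nabla\rangle}f_N = K_N^\pm(t,\cdot)*f_N,\qquad K_N^\pm(t,x):=\int_{\mathbb{R}^d} e^{i(x\cdot\xi \pm t\langle\xi\rangle)}\psi_N(\xi)\,d\xi,\]
so that the $L^1\to L^\infty$ decay reduces by Young's inequality to three pointwise bounds on the oscillatory kernel $K_N^\pm$. The trivial bound $\|K_N^\pm\|_{L^\infty}\lesssim N^d$ is immediate from $|\operatorname{supp}\psi_N|\lesssim N^d$; the other two come from stationary phase applied to the phase $\Phi(\xi)=x\cdot\xi\pm t\langle\xi\rangle$.

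The key computation is
\[\nabla^2_\xi\langle\xi\rangle \;=\; \langle\xi\rangle^{-1}I \;-\; \langle\xi\rangle^{-3}\,\xi\otimes\xi,\]
whose eigenvalues are $\langle\xi\rangle^{-3}$ in the radial direction (multiplicity $1$) and $\langle\xi\rangle^{-1}$ in each of the $d-1$ angular directions, so that $|\det\nabla^2\Phi|\sim |t|^d\langle\xi\rangle^{-(d+2)}$ on $\operatorname{supp}\psi_N$. Full $d$-dimensional stationary phase therefore yields the Schr\"odinger-type bound $|t|^{-d/2}N^{(d+2)/2}$. Applying stationary phase only in the $d-1$ nondegenerate angular directions, and estimating the flat radial direction trivially by the length $\sim N$ of the radial annulus, yields the wave-type bound $|t|^{-(d-1)/2}N^{(d+1)/2}$. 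Taking the minimum of the three upper bounds gives the dispersive estimate.

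For the Strichartz estimates I would feed both dispersive rates into the Keel--Tao abstract $TT^*$ machinery together with the unitarity $\|e^{\pm it\langle\nabla\rangle}f\|_{L^2_x}=\|f\|_{L^2_x}$. The wave decay $|t|^{-(d-1)/2}N^{(d+1)/2}$ yields the exponent $N^{d/2-1/q-d/r}$ on the wave-admissible subrange $\tfrac{2}{q}+\tfrac{d-1}{r}\le\tfrac{d-1}{2}$; the Schr\"odinger decay $|t|^{-d/2}N^{(d+2)/2}$ yields the exponent $N^{1/2+1/q-1/r}$ on the full Schr\"odinger-admissible range $\tfrac{2}{q}+\tfrac{d}{r}\le\tfrac{d}{2}$. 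Taking the max of the two contributions recovers the stated uniform bound on every Schr\"odinger-admissible pair with $q,r\ge 2$; the exponents agree at the common sharp line, so this is genuinely nothing more than interpolation between the two decay regimes.

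The only genuinely delicate step, and the main obstacle, is the excluded double-endpoint $(q,r,d)=(2,\infty,3)$, where Keel--Tao fails. The logarithmically weakened bound $\|e^{\pm it\langle\nabla\rangle}f_N\|_{L^2_tL^\infty_x}\lesssim N\log^{1/2}(2N)\|f_N\|_{L^2}$ is classical: one can either perform a further dyadic angular decomposition of the wave cone and square-sum the resulting orthogonal pieces, or run the non-endpoint wave Strichartz at $(2,r)$ for finite $r$ together with a Besov-to-$L^\infty$ embedding and optimize the dyadic summation in $r$; either way the gap between the $\ell^2$ and $\ell^1$ Besov summations produces the $\log^{1/2}(2N)$ factor.
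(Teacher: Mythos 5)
The paper gives no proof of this lemma at all --- it simply recalls the statement and refers the reader to the references (Brenner, Ginibre--Velo) --- so there is no in-paper argument to compare yours against. Nevertheless your proof is correct and is methodologically identical to what the paper does elsewhere: the Hessian calculation (eigenvalue $\langle\xi\rangle^{-3}$ along $\xi$, eigenvalue $\langle\xi\rangle^{-1}$ on the $(d-1)$-dimensional orthogonal complement, determinant $\langle\xi\rangle^{-(d+2)}$) and the resulting Schr\"odinger/wave dichotomy are precisely the computation the paper carries out in its own proof of the unit-cube refinement, Lemma~\ref{refineddecayestimates}.

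Two points in the Strichartz half should be tightened. First, feeding the $|t|^{-d/2}$ decay alone into Keel--Tao reproduces the exponent $N^{1/2+1/q-1/r}$ only on the \emph{sharp} Schr\"odinger line $\tfrac{2}{q}+\tfrac{d}{r}=\tfrac{d}{2}$. For $(q,r)$ lying strictly between the wave and Schr\"odinger sharp lines you cannot just invoke the Schr\"odinger bound and Bernstein down from the sharp $r_0$, because that overpays by $N^{(d-1)(1/r_0-1/r)}$. What is needed is to take the geometric mean of the two dispersive bounds, giving a rate $N^{1+\sigma}|t|^{-\sigma}$ for the $\sigma\in\bigl(\tfrac{d-1}{2},\tfrac{d}{2}\bigr)$ that makes $(q,r)$ sharp $\sigma$-admissible, and then apply Keel--Tao at that intermediate rate; this produces $N^{\frac{1+\sigma}{2}(1-2/r)}=N^{1/2+1/q-1/r}$. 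Your closing phrase ``nothing more than interpolation between the two decay regimes'' is the right instinct, but the sentence ``the Schr\"odinger decay yields the exponent on the full Schr\"odinger-admissible range'' is not correct as written. Second, for the excluded endpoint $(q,r,d)=(2,\infty,3)$, your second proposed route (non-endpoint $L^2_tL^r_x$ plus Bernstein, optimize in $r$) gives $N^{1+2/r}$, which never reaches $N\log^{1/2}(2N)$; the angular decomposition with $\ell^2$ square-summation of almost-orthogonal pieces that you mention first is the argument that is genuinely needed. Since the paper only uses $d\in\{4,5\}$ this endpoint never enters, so the vagueness there is harmless in context.
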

  
  	 
  	 \vspace{5pt}
  	 Similar to the wave equation case (Lemma 2.12 in \cite{Bringmann1}), we have better decay estimates when the frequence is restricted on a unit cube.
  	 \begin{lem}[Refined decay estimate]\label{refineddecayestimates} For $2\leq r\leq \infty$,
  	 	\begin{equation}\label{refineddecayestimate} 
  	 		\|e^{\pm it \langle \nabla \rangle} P_k f\|_{L^r}\lesssim \min\{1, \langle k\rangle^{\frac{d-1}{2}}|t|^{-\frac{d-1}{2}}, \langle k \rangle^{\frac{d+2}{2}}|t|^{-\frac{d}{2}}\}^{1-\frac{2}{r}}\|f\|_{L^{r'}}.
  	 	\end{equation}
  	 \end{lem}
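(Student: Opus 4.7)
The plan is to reduce matters to an $L^1\to L^\infty$ dispersive bound for the kernel of $e^{\pm it\langle\nabla\rangle}P_k$, and then recover every $r\in[2,\infty]$ by Riesz--Thorin interpolation against Plancherel's trivial $L^2\to L^2$ bound. Writing $(e^{\pm it\langle\nabla\rangle}P_k f)(x) = (K^k_\pm(t,\cdot)*f)(x)$ with kernel
\[
K^k_\pm(t,x) = (2\pi)^{-d}\int_{\mathbb{R}^d} e^{\pm it\langle\xi\rangle + ix\cdot\xi}\varphi_k(\xi)\,d\xi,
\]
the statement reduces to proving
\[
\|K^k_\pm(t,\cdot)\|_{L^\infty_x}\lesssim \min\bigl\{1,\,\langle k\rangle^{(d-1)/2}|t|^{-(d-1)/2},\,\langle k\rangle^{(d+2)/2}|t|^{-d/2}\bigr\}.
\]
The first bound is immediate from $\|\varphi_k\|_{L^1}\lesssim 1$. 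The other two are oscillatory-integral bounds.

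To attack the oscillatory estimates, I would shift $\eta := \xi - k$, so the integration runs over $\operatorname{supp}\varphi$, and use the Taylor expansion
\[
\langle k+\eta\rangle = \langle k\rangle + \frac{k\cdot\eta}{\langle k\rangle} + \frac{1}{2\langle k\rangle}\Bigl(|\eta|^2 - \frac{(k\cdot\eta)^2}{\langle k\rangle^2}\Bigr) + O(|\eta|^3).
\]
The Hessian of $\langle\cdot\rangle$ at $k$ has eigenvalue $\langle k\rangle^{-1}$ of multiplicity $d-1$ on the tangential subspace $k^{\perp}$, and eigenvalue $\langle k\rangle^{-3}$ in the radial direction along $k$ (all comparable to $1$ when $|k|\lesssim 1$). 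When $|t|\gtrsim\langle k\rangle$, performing stationary phase in the $d-1$ tangential directions, while bounding the radial direction trivially, yields the wave-type bound $\langle k\rangle^{(d-1)/2}|t|^{-(d-1)/2}$. When furthermore $|t|\gtrsim\langle k\rangle^3$, including stationary phase in the remaining radial direction gains the factor $(\langle k\rangle^3/|t|)^{1/2}$, producing the Schr\"odinger-type bound $\langle k\rangle^{(d+2)/2}|t|^{-d/2}$. Regions where the critical point lies outside the support of $\varphi_k$ are handled by non-stationary integration by parts and only improve the bound.

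Taking the minimum of the three gives the desired $L^1\to L^\infty$ estimate. Combined with $\|e^{\pm it\langle\nabla\rangle}P_k f\|_{L^2}\leq \|f\|_{L^2}$ from Plancherel, Riesz--Thorin interpolation with $\theta = 2/r$ gives an $L^{r'}\to L^r$ bound whose constant is the $L^1\to L^\infty$ constant raised to the power $1-2/r$, which is precisely (\ref{refineddecayestimate}).

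The main obstacle is the stationary phase step: one must track the two disparate Hessian scales $\langle k\rangle^{-1}$ and $\langle k\rangle^{-3}$ uniformly in $k\in\mathbb{Z}^d$, localize the $\eta$-integration to extract each scale without losses at the boundary of $\operatorname{supp}\varphi$, and ensure the estimates join smoothly across the transition between small-$|k|$ (where the two scales coincide and the full $d$-dimensional Schr\"odinger bound applies directly) and large-$|k|$ (where the intermediate wave-type bound is active in the window $\langle k\rangle\lesssim |t|\lesssim\langle k\rangle^3$).
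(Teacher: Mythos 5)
Your proposal is correct and takes essentially the same route as the paper: reduce to $r=\infty$ via Plancherel and interpolation, convert to an $L^\infty_x$ bound on the kernel $\mathscr{F}^{-1}(e^{\pm it\langle\cdot\rangle}\varphi_k)$ via Young's inequality, and then apply stationary phase to the oscillatory integral while tracking the two Hessian scales ($\langle k\rangle^{-1}$ in the $d-1$ tangential directions, $\langle k\rangle^{-3}$ radially). The paper implements this by computing $\det(\nabla_\xi^2\Phi) = (\pm t)^d\langle\xi+k\rangle^{-(d+2)}$ directly (and, for the wave-type bound, the reduced Hessian determinant after fixing the coordinate direction with $|k_d|\gtrsim|k|$) and invoking a uniform lower bound on the determinant over the compact support of $\varphi$, whereas you expand $\langle k+\eta\rangle$ and read off the eigenvalue scales — the same information packaged slightly differently.
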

   	 \begin{proof}[\textbf{Proof}] By interpolation with $r = 2$ which is easy to obtain by Plancherel indentity, we only need to show the case $r = \infty$. $e^{\pm it\langle\nabla\rangle}P_k f  = \mathscr{F}^{-1}(e^{\pm it\langle \cdot \rangle}\varphi(\cdot-k))*f$, by Young inequality, we have 
   	 	$$	\|e^{\pm it \langle \nabla \rangle} P_k f\|_{\infty}\lesssim \|\mathscr{F}^{-1}(e^{\pm it\langle \cdot \rangle}\varphi(\cdot-k))\|_{L^\infty}\|f\|_{L^1}.$$
   	 	Then,
   	 	\begin{align*}
   	 		\mathscr{F}^{-1}(e^{\pm it\langle \cdot \rangle}\varphi(\cdot-k))(x) &= \frac{1}{(2\pi)^d}\int_{\mathbb{R}^d}e^{\pm it \langle \xi\rangle}\varphi(\xi-k)e^{ix\cdot \xi}d\xi\\
   	 		& = \frac{e^{ix\cdot k}}{(2\pi)^d}\int_{\mathbb{R}^d}\varphi(\xi)e^{i(x\cdot \xi\pm t\langle\xi+k\rangle)}d\xi.
   	 	\end{align*}
    	$\Phi(\xi; t,k):= x\cdot \xi\pm t\langle \xi+k\rangle$, $\det(\nabla_\xi^2 \Phi(\xi;t,k)) = (\pm t)^d\langle \xi+k\rangle^{-(d+2)}$. $\varphi$ supports on $[-1,1]^d$. Thus, $|\det(\nabla_\xi^2 \Phi(\xi;t,k))|\gtrsim |t|^d\langle k\rangle^{-(d+2)}$, by nonstationary argument, we obtain the second part. By symmetry, we can assume $|k_d|\gtrsim |k|, k = (k',k_d)$. Then, $\det(\nabla_{\xi'}^2 \Phi(\xi;t,k)) = (\pm t)^{d-1}\langle\xi+k\rangle^{-d+1}\frac{\langle \xi_d+k_d\rangle^{2}}{\langle \xi+k\rangle^2}$, thus
    	$|\det(\nabla_{\xi'}^2 \Phi(\xi;t,k))|\gtrsim |t|^{d-1}\langle k \rangle^{-(d-1)}$. By nonstationary argument, we obtain
    	\begin{align*}
    		\|\mathscr{F}^{-1}(e^{\pm it\langle \cdot \rangle}\varphi(\cdot-k))\|_{L^\infty}\lesssim (\langle k\rangle ^{-1}|t|)^{-\frac{d-1}{2}}.
    	\end{align*}
		We also have $\|\mathscr{F}^{-1}(e^{\pm it\langle \cdot \rangle}\varphi(\cdot-k))\|_{L^\infty}\lesssim \|\varphi\|_{L^1}\lesssim 1$, then, (\ref{refineddecayestimate}).
   	 \end{proof}
  	 \vspace{5pt}
  	 It is standard to obtain Strichartz estimates by the decay estimates in Lemma \ref{refineddecayestimates}. For the endpoint case, see \cite{KeelTao}. We omit the proof.
 	 \begin{lem}[Refined Strichartz estimates]\label{Refinstr} 	
 	 	For $\frac{2}{q}+\frac{d-1}{r}\leq \frac{d-1}{2}, q,r\geq 2, (q,r,d)\neq (2,\infty,3)$, we have
 	 	\begin{equation*}
 	 		 \|e^{\pm it\langle \nabla \rangle} P_kf\|_{L^q_tL^r_x}\lesssim \langle k\rangle^\frac{1}{q} \|P_k f\|_{L^2}.
 	 	\end{equation*}
  	
  	    For 
  	    $d=3$, we have
  	    $\|e^{\pm it\langle \nabla \rangle} P_kf\|_{L^2_tL^\infty_x}\lesssim \langle k\rangle^\frac{1}{2}\log^\frac{1}{2} (2\langle k\rangle) \|P_k f\|_{L^2}$. 
  	    
  		For $\frac{2}{q}+\frac{d-1}{r}>\frac{d-1}{2}, \frac{2}{q}+\frac{d}{r}\leq \frac{d}{2}$, $q,r\geq 2$, $(q,r,d)\neq (2,\infty,2)$, we have
  		\begin{equation*}
  			\|e^{\pm it\langle \nabla \rangle} P_kf\|_{L^q_tL^r_x}\lesssim \langle k\rangle^{\frac{3}{q}-(d-1)(\frac{1}{2}-\frac{1}{r})} \|P_k f\|_{L^2}.
  		\end{equation*}
 	 \end{lem}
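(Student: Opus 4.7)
The plan is to use the $TT^*$ argument with Lemma \ref{refineddecayestimates} as the dispersive input. Setting $Tf := e^{\pm it\langle\nabla\rangle}P_k f$, duality gives $\|T\|_{L^2_x\to L^q_tL^r_x}^2=\|TT^*\|_{L^{q'}_tL^{r'}_x\to L^q_tL^r_x}$, and the convolution kernel of $TT^*$ obeys, after interpolating Plancherel with Lemma \ref{refineddecayestimates}, the pointwise bound
\begin{equation*}
\|e^{\pm i\tau\langle\nabla\rangle}P_k^2 g\|_{L^r_x}\lesssim K(\tau)\|g\|_{L^{r'}_x},\qquad K(\tau):=\min\bigl\{1,\langle k\rangle^{\frac{d-1}{2}}|\tau|^{-\frac{d-1}{2}},\langle k\rangle^{\frac{d+2}{2}}|\tau|^{-\frac{d}{2}}\bigr\}^{1-\frac{2}{r}}.
\end{equation*}
Minkowski followed by Young's convolution inequality (with its weak-type replacement at endpoints) yields $\|T\|^2\lesssim \|K\|_{L^{q/2}_t}$, reducing the lemma to a one-dimensional integral computation.

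To compute $\|K\|_{L^{q/2}_t}$ I split $\mathbb{R}$ into the three regimes on which the three terms of the minimum are active: the \emph{short} regime $|\tau|\leq\langle k\rangle$, the \emph{wave} regime $\langle k\rangle\leq|\tau|\leq\langle k\rangle^3$ (decay $(\langle k\rangle/|\tau|)^{\gamma}$, $\gamma:=\tfrac{(d-1)(1-2/r)}{2}$), and the \emph{Schr\"odinger} regime $|\tau|\geq\langle k\rangle^3$. For wave-admissible $(q,r)$ with $\gamma q/2>1$ each regime contributes $O(\langle k\rangle)$ to $\|K\|_{L^{q/2}_t}^{q/2}$, so $\|K\|_{L^{q/2}_t}\sim\langle k\rangle^{2/q}$ and the first estimate follows. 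For $(q,r)$ strictly between the wave and Schr\"odinger lines (so $\gamma q/2<1$), the short regime is subdominant and the wave and Schr\"odinger tails each contribute $\sim\langle k\rangle^{3-\gamma q}$ (the identity $\tfrac{(d+2)(1-2/r)q}{4}-\tfrac{3d(1-2/r)q}{4}=-\gamma q$ matches the two exponents), so $\|K\|_{L^{q/2}_t}\sim\langle k\rangle^{6/q-(d-1)(1-2/r)}$; taking the square root gives the stated exponent $\langle k\rangle^{3/q-(d-1)(1/2-1/r)}$, which one checks collapses to $\langle k\rangle^{1/q}$ on the wave line and to $\langle k\rangle^{(d+2)/(dq)}$ on the Schr\"odinger line.

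The main obstacle is the wave-admissible boundary $\gamma q/2=1$, where Young's inequality degenerates. For $q>2$ I would invoke the Keel-Tao abstract Strichartz theorem in place of Young, using only that $K\in L^{q/2,\infty}_t$ with $\|K\|_{L^{q/2,\infty}_t}\sim \langle k\rangle^{2/q}$; this recovers the factor $\langle k\rangle^{1/q}$ without log. The genuine log loss occurs at $(q,r,d)=(2,\infty,3)$ where Keel-Tao itself fails: here $q/2=1$, $\gamma=1$, and the wave-regime integral $\int_{\langle k\rangle\leq|\tau|\leq\langle k\rangle^3}\langle k\rangle|\tau|^{-1}d\tau=2\langle k\rangle\log\langle k\rangle$ forces $\|K\|_{L^1_t}\sim\langle k\rangle\log(2\langle k\rangle)$, whose square root is the $\langle k\rangle^{1/2}\log^{1/2}(2\langle k\rangle)$ of the statement. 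The remaining excluded case $(q,r,d)=(2,\infty,2)$ sits outside the dispersive hypotheses.
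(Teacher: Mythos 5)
Your proposal is correct and follows the same standard route the paper has in mind: the authors simply remark that the lemma follows from the refined decay estimate via the usual $TT^*$/Young argument, citing Keel--Tao for the endpoint, and omit the details entirely. Your three-regime split of $\mathbb{R}_\tau$ at $|\tau|=\langle k\rangle$ and $|\tau|=\langle k\rangle^3$, the resulting exponent bookkeeping (including the cross-check that both tails contribute $\langle k\rangle^{3-\gamma q}$ in the intermediate range, and that the exponent $3/q-(d-1)(1/2-1/r)$ collapses correctly on the wave and Schr\"odinger lines), and the log-loss computation at $(2,\infty,3)$ are all right.

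One small point worth flagging: the degeneration of Young's inequality you discuss at $\gamma q/2=1$ also occurs at the Schr\"odinger endpoint $q=2$, $r=\tfrac{2d}{d-2}$ (where $\beta q/2=1$, so the $|\tau|\geq\langle k\rangle^3$ tail of $\int K^{q/2}$ diverges). That point sits in the third case of the lemma for every $d\geq 3$ and is not covered by your explicit remedies (weak Young/HLS needs $q>2$, and your Keel--Tao remark is phrased for the wave boundary). The fix is the same: one verifies $K(\tau)\lesssim\langle k\rangle^{(d+2)/2}|\tau|^{-d/2}$ globally and then applies the endpoint Keel--Tao theorem with $\sigma=d/2$, which after rescaling $t\mapsto \langle k\rangle^{(d+2)/d}t$ yields precisely $\langle k\rangle^{(d+2)/(dq)}$. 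Since the paper's single sentence ``for the endpoint case, see \cite{KeelTao}'' silently covers both endpoints, your proof is in the same spirit and only needs this one extra clause to be complete.

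Also a presentational caveat: invoking Keel--Tao ``using only that $K\in L^{q/2,\infty}$'' conflates two arguments. For $2<q<\infty$ on the wave line, the weak-type kernel bound $K\lesssim\langle k\rangle^{2/q}|\tau|^{-2/q}$ plus weak Young (equivalently HLS) already gives the result without Keel--Tao. The abstract Keel--Tao machinery is needed only at $q=2$, and there it does not take a kernel in $L^{1,\infty}$ as input; it uses the untruncated dispersive bound together with bilinear interpolation. The conclusion you state is correct, but these are genuinely different mechanisms.
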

  	 \vspace{5pt}
  	 We often use the Lemma 2.6 in \cite{Bringmann1} to transform $L_t^\infty$ estimate to some $L^q_t, ~q<\infty$.
  	 \begin{lem}[\cite{Bringmann1}, Lemma 2.6]\label{inftytofine}
  	 	For $1\leq q,r\leq \infty$, $N\in 2^{\mathbb{N}_0}$, $f\in L^2$,
  	 	\begin{equation*}
  	 		\|e^{\pm it\langle \nabla \rangle} P_Nf\|_{L^\infty_t(\mathbb{R}, L^r_x)}\lesssim N^\frac{1}{q} \|e^{\pm it\langle\nabla\rangle}P_N f\|_{L^q_t(\mathbb{R}, L^r_x)}.
  	 	\end{equation*}
  	 \end{lem}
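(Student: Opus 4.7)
The plan is to prove this as a Bernstein-type inequality in the time variable, exploiting the fact that for fixed $x$, the map $t \mapsto (e^{\pm it\langle \nabla \rangle} P_N f)(x)$ has its temporal Fourier support localized to a set of size $\lesssim N$. Concretely, writing
\begin{equation*}
v(t,x) := (e^{\pm it\langle \nabla \rangle} P_N f)(x) = \int_{\mathbb{R}^d} e^{ix\cdot\xi}\,e^{\pm it\langle\xi\rangle}\,\widehat{P_N f}(\xi)\,d\xi,
\end{equation*}
I would observe that the distributional Fourier transform in $t$ is supported in $\tau \in \{\pm\langle\xi\rangle : \xi \in \mathrm{supp}(\widehat{P_N f})\}$. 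For $N\ge 2$ the set $\{\langle\xi\rangle : |\xi|\sim N\}$ is contained in $[N/2,3N]$, and for $N=1$ it is contained in $[1,\sqrt{5}]$; in either case the support lies in $|\tau|\le CN$ for an absolute $C$.

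The next step is to reproduce $v$ by convolution with a single-scale kernel. I would fix $\chi\in C_c^\infty(\mathbb{R})$ with $\chi\equiv 1$ on $[-C,C]$ and $\mathrm{supp}\,\chi\subset[-2C,2C]$, then set
\begin{equation*}
K_N(t) := \bigl(\mathscr{F}^{-1}_\tau \chi(\tau/N)\bigr)(t) = N\,\check{\chi}(Nt).
\end{equation*}
Since $\widehat{K_N}=\chi(\tau/N)\equiv 1$ on the time-frequency support of $v(\cdot,x)$, we have the reproducing identity $v(t,x)=\int K_N(t-s)v(s,x)\,ds$. A direct scaling computation gives $\|K_N\|_{L^{q'}_t} = N^{1/q}\|\check{\chi}\|_{L^{q'}}\lesssim N^{1/q}$ for every $q\in[1,\infty]$ (the endpoints $q=1$ and $q=\infty$ correspond respectively to the $L^\infty$ and $L^1$ norms of $\check\chi$ and are both fine since $\check\chi\in\mathcal{S}$).

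To conclude, I would apply Minkowski's integral inequality in $x$ and then H\"older in $s$: for every $t\in\mathbb{R}$,
\begin{equation*}
\|v(t,\cdot)\|_{L^r_x} \le \int_{\mathbb{R}} |K_N(t-s)|\,\|v(s,\cdot)\|_{L^r_x}\,ds \le \|K_N\|_{L^{q'}_t}\,\|v\|_{L^q_t L^r_x} \lesssim N^{1/q}\|v\|_{L^q_t L^r_x}.
\end{equation*}
Taking the supremum over $t$ yields the claimed bound for both sign choices of the propagator and all $1\le q,r\le\infty$.

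I do not expect a genuine obstacle here; the only subtlety worth flagging is the use of Minkowski (rather than Young's convolution inequality for mixed norms) to move the $L^r_x$ norm inside the $s$-integral, since without that step the direct Young inequality would require an inconvenient ordering of the mixed norms. The structural point — that the temporal Fourier support of the Klein-Gordon propagator applied to a frequency-localized datum lies in a window of width $\lesssim N$, independently of $x$ — is what makes the Bernstein scaling $N^{1/q}$ natural and sharp.
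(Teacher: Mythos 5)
Your argument is correct. The paper itself gives no proof here — it simply cites Bringmann's Lemma 2.6 — so let me assess the proposal on its own merits and against Bringmann's argument. Your route is a Bernstein inequality in the $t$-variable: you observe that, for each fixed $x$, the temporal Fourier transform of $v(\cdot,x) = (e^{\pm it\langle\nabla\rangle}P_N f)(x)$ is a distribution supported on $\{\pm\langle\xi\rangle : \xi\in\mathrm{supp}\,\psi_N\}\subset\{|\tau|\le CN\}$, reproduce $v$ by convolution with a dilated Schwartz kernel $K_N$ that is identically $1$ on that support, and then close with Minkowski and H\"older. The scaling $\|K_N\|_{L^{q'}_t}\sim N^{1/q}$ is checked correctly, including the endpoints $q=1,\infty$, and the boundary case $N=1$ (where $\langle\xi\rangle\in[1,\sqrt 5]$) is handled. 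Since the Fourier support is a single-scale set and both sides of the reproducing identity are bounded continuous functions of $t$, the distributional identity upgrades to a pointwise one; everything is legitimate. This is a clean, self-contained proof.

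Bringmann's own proof of his Lemma 2.6 runs differently: it uses the group law $e^{\pm it\langle\nabla\rangle}P_N = e^{\pm i(t-s)\langle\nabla\rangle}\tilde P_N\,e^{\pm is\langle\nabla\rangle}P_N$ (with $\tilde P_N$ a fattened projection), together with the kernel bound $\|e^{\pm i\sigma\langle\nabla\rangle}\tilde P_N\|_{L^r\to L^r}\lesssim 1$ uniformly for $|\sigma|\le N^{-1}$, and then averages the resulting pointwise-in-$t$ bound over $s\in[t,t+N^{-1}]$ in $L^q$ to produce the $N^{1/q}$ factor. The two proofs exploit the same $1/N$-scale smoothness in time, but yours does so via Fourier support and a reproducing convolution, while Bringmann's does so via the semigroup and a mean-value averaging. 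Your version avoids having to estimate the kernel of $e^{\pm i\sigma\langle\nabla\rangle}\tilde P_N$ for small $\sigma$ (which requires a mild stationary-phase/integration-by-parts argument adapted to the Klein--Gordon symbol $\langle\xi\rangle$ rather than $|\xi|$); Bringmann's version avoids the detour through tempered distributions. Both are standard and both give the sharp $N^{1/q}$ factor, so this is a difference of technique rather than substance.
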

  
  	 \subsection{Probabilistic estimates}\label{randommmmm}
  	 We recall some basic estimates for sub-Gaussian random variables. See also the subsection 2.1 in \cite{Bringmann1}.
	 \begin{defi}
	 	Let $\Omega$ be a probability space. $X:\Omega\rightarrow \mathbb{R}$ is a random variable. If $\sup_{1\leq p<\infty}p^{-\frac{1}{2}}\|X\|_{L^p}<\infty$, we call $X$ sub-Gaussian, and define 
	 	\begin{equation}\label{subgaussiannn}
	 		\|X\|_{\Psi}:= \sup_{1\leq p<\infty}p^{-\frac{1}{2}}\|X\|_{L^p}.
	 	\end{equation}
	 \end{defi}
 	 \begin{rem}
 	 	There are other equivalent descriptions of the sub-Gaussian variables. $\|X\|_{\Psi}<\infty$  is equivalent to that for some $c>0$, 
 	 	$$|\{\omega:|X(\omega)|>\lambda\}|\leq 2e^{-c\lambda^2}, \quad \forall~\lambda>0.$$
 	 	If we assume that $X$ is mean zero and real valued, $\|X\|_{\Psi}<\infty$ is also equivalent to that for some $M>0$, 
 	 	$$\int_{\mathbb{R}}e^{\gamma x}d\mu(x)\leq e^{M\gamma^2}, \quad \forall~\gamma\in \mathbb{R},$$
 	 	where $\mu(x):= |\{\omega:X(\omega)<x\}|, ~x\in \mathbb{R}$.
 	 	In fact, the best $M, c$ satisfy $M\sim \|X\|_{\Psi}^2\sim \frac{1}{c}$. See for example, \cite{burq2008random1, Propbability}.
 	 \end{rem}
     
	 We mainly use the following two estimates.  For the proof, see for example \cite{Propbability}, \cite{Bringmann1}.
  	 \begin{lem}[Khinchin inequality]
  	 	Let $\{X_j\}_{j=1}^J$ be independent, zero-mean
  	 	 sub-Gaussian random variables. Then, for any given $\{a_j\}_{j=1}^J\subset \mathbb{C}$, $1\leq p<\infty$,
  	 	\begin{equation*}
  	 		\left\|\sum_{j=1}^J a_j X_j\right\|_{L^1}\leq C p^\frac{1}{2}\|a_{j}\|_{l^2_{1\leq j\leq J}}\max_{1\leq j\leq J}\|X_j\|_{\Psi}.
  	 	\end{equation*}
   		The constant $C$ does not rely on $p, J$.
  	 \end{lem}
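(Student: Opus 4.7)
The plan is the standard Chernoff/layer-cake argument: translate the $\Psi$-norm control into a Gaussian moment-generating-function bound, combine with independence to obtain a sub-Gaussian tail bound on $S := \sum_j a_j X_j$, and then integrate the tail. (Note that the $p^{1/2}$ factor on the right forces the left-hand norm to be $L^p$ rather than $L^1$ as literally printed; I will prove the $L^p$ version, from which $L^1$ trivially follows.)

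First I would reduce to real coefficients by splitting $a_j = \alpha_j + i\beta_j$, applying the triangle inequality in $L^p$, and noting $\|\alpha\|_{l^2}^2 + \|\beta\|_{l^2}^2 = \|a\|_{l^2}^2$. With real $a_j$, I invoke the moment-generating-function characterization of mean-zero sub-Gaussian variables recalled in the remark: there is $M \sim \max_j \|X_j\|_\Psi^2$ with $\mathbb{E} e^{\gamma X_j} \leq e^{M\gamma^2}$ for every $\gamma \in \mathbb{R}$. Independence then gives
\[
\mathbb{E} e^{\gamma S} \;=\; \prod_{j=1}^J \mathbb{E} e^{\gamma a_j X_j} \;\leq\; \prod_{j=1}^J e^{M\gamma^2 a_j^2} \;=\; e^{M\gamma^2 \|a\|_{l^2}^2}.
\]
Applying Markov's inequality to $e^{\pm\gamma S}$ and optimizing at $\gamma = \lambda/(2M\|a\|_{l^2}^2)$ produces the sub-Gaussian tail bound
\[
\mathbb{P}\bigl(|S|>\lambda\bigr) \;\leq\; 2\exp\!\bigl(-c\lambda^2/\|a\|_{l^2}^2\bigr),\qquad c \;\sim\; 1/\max_j\|X_j\|_\Psi^2.
\]

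Finally I would pass from tails to $L^p$ via the layer-cake formula
\[
\|S\|_{L^p}^p \;=\; p\int_0^\infty \lambda^{p-1}\,\mathbb{P}(|S|>\lambda)\,d\lambda,
\]
substitute $\mu = c\lambda^2/\|a\|_{l^2}^2$ to reduce the integral to $\tfrac{1}{2}(\|a\|_{l^2}^2/c)^{p/2}\Gamma(p/2)$, and apply Stirling in the form $\Gamma(p/2)^{1/p}\lesssim \sqrt{p}$ with an absolute constant. This yields $\|S\|_{L^p}\leq C p^{1/2}\|a\|_{l^2}\max_j\|X_j\|_\Psi$ with $C$ independent of $p$ and $J$.

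There is no real obstacle here: the result is classical and every step is bookkeeping once the MGF characterization is in hand. The only point deserving care is the uniformity of the constant in both $p$ and $J$, which is automatic because $M$, $c$, and the implicit Stirling constant are all absolute, while the independence step produces a bound on the joint MGF whose exponent depends on $a$ only through $\|a\|_{l^2}^2$.
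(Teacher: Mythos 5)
Your proof is correct, and you correctly flagged the $L^1$ on the left-hand side as a typo for $L^p$ (the $p^{1/2}$ factor only makes sense with $L^p$ on the left; with $L^1$ the optimal choice would be $p=1$ and the factor would be vacuous). The paper does not actually give a proof of this lemma — it simply cites \cite{Propbability} and \cite{Bringmann1} — so there is no in-paper argument to compare against; your Chernoff/layer-cake route (moment-generating-function bound from the $\Psi$-norm characterization, independence to factor the joint MGF, optimized Markov inequality to get a sub-Gaussian tail with rate $c\sim 1/\max_j\|X_j\|_\Psi^2$, then layer-cake and Stirling to produce the $p^{1/2}$ growth) is the standard and essentially the only reasonable one. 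Two minor remarks: the reduction to real $a_j$ costs a harmless $\sqrt{2}$ in the constant (triangle inequality in $L^p$ gives $\|\alpha\|_{l^2}+\|\beta\|_{l^2}\le\sqrt{2}\,\|a\|_{l^2}$ rather than $\|a\|_{l^2}$ itself, which you should state explicitly), and the MGF characterization recalled in the paper's remark is for real-valued mean-zero variables, so you are implicitly using the paper's standing assumption that the $X_j$ are real — worth making explicit since the lemma as stated only says ``sub-Gaussian.''
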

     \begin{lem}[Lemma 2.4 in \cite{Bringmann1}]\label{supsubgaussianlemmma}
     	Assume that $\{X_j\}_{j=1}^J$ are sub-Gaussian random variables. Then,
     	\begin{equation*}
     		\left\|\max_{1\leq j\leq J}|X_j|\right\|_{L^1}\leq C\log \langle J \rangle \max_{1\leq j\leq J} \|X_j\|_{\Psi}.
     	\end{equation*}
     	The constant $C$ does not rely on $J$.
     \end{lem}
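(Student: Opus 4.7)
The plan is to dominate the $L^1$ norm by an $L^p$ norm for a well-chosen exponent $p$, then invoke the sub-Gaussian hypothesis (\ref{subgaussiannn}), which supplies $\|X_j\|_{L^p}\le p^{1/2}\|X_j\|_\Psi$ for every $p\ge 1$, and finally optimize $p$ so as to absorb the combinatorial factor coming from a crude union-type bound on the maximum.

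Concretely, since the probability space has total mass one, H\"older gives $\|\max_j |X_j|\|_{L^1}\le \|\max_j |X_j|\|_{L^p}$ for any $p\ge 1$. From the pointwise estimate $(\max_j |X_j|)^p\le \sum_{j=1}^J |X_j|^p$, the sub-Gaussian bound then yields
\[
\bigl\|\max_{1\le j\le J}|X_j|\bigr\|_{L^p}^p \;\le\; \sum_{j=1}^J \|X_j\|_{L^p}^p \;\le\; J\, p^{p/2}\,\max_{1\le j\le J}\|X_j\|_\Psi^p,
\]
whence
\[
\bigl\|\max_{1\le j\le J}|X_j|\bigr\|_{L^1}\;\le\; J^{1/p}\, p^{1/2}\,\max_{1\le j\le J} \|X_j\|_\Psi .
\]

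The decisive step is the choice of $p$: taking $p=\log\langle J\rangle$ whenever this quantity is at least $1$ makes $J^{1/p}=e^{\log J/\log\langle J\rangle}\le e$ and $p^{1/2}=(\log\langle J\rangle)^{1/2}$, giving
\[
\bigl\|\max_{1\le j\le J}|X_j|\bigr\|_{L^1}\;\lesssim\;(\log\langle J\rangle)^{1/2}\,\max_{1\le j\le J}\|X_j\|_\Psi,
\]
which is sharper than the claimed bound and therefore implies it (using $(\log\langle J\rangle)^{1/2}\le \log\langle J\rangle$ for $J$ large and absorbing the remaining range into the constant). In the degenerate regime $\log\langle J\rangle<1$, relevant only for $J=1$, one simply takes $p=1$ in (\ref{subgaussiannn}) to obtain $\|X_1\|_{L^1}\le \|X_1\|_\Psi \lesssim \log\langle 1\rangle\,\|X_1\|_\Psi$. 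There is no substantial obstacle here; the argument is a standard Chebyshev/moment optimization, and the only point requiring care is checking that the implicit constants are independent of $J$, which the explicit choice $p=\log\langle J\rangle$ makes transparent.
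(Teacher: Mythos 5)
Your proof is correct, and it is the standard Chebyshev/moment-optimization argument for the maximum of sub-Gaussian random variables; the paper itself omits the proof and simply refers to \cite{Propbability} and \cite{Bringmann1}, so there is no in-text argument to compare against. Two small remarks: your computation actually yields the sharper bound $(\log\langle J\rangle)^{1/2}$, consistent with the known asymptotics $\mathbb{E}\max_j|X_j|\sim\sqrt{2\log J}$ for i.i.d.\ Gaussians, and this trivially implies the stated (weaker) bound; and the inequality $J^{1/p}\le e$ for $p=\log\langle J\rangle$ holds because $\log\langle J\rangle=\tfrac12\log(1+J^2)\ge\log J$ for all $J\ge1$, which you use implicitly and which is worth stating explicitly.
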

	 
	 \section{Deterministic perturbation equations}
	 \subsection{Local existence}\label{localexistenceeee}
	 \begin{pro}\label{localexistence}
	 	Given
	 	$F\in S_{loc}(\mathbb{R}), (v_0,v_1)\in \mathcal{H}^1$,	the equation 
	 	\begin{equation}\label{perturb}
	 	\left\{
	 	\begin{array}{l}
	 	v_{tt} - \Delta v + v + (v+F)^\frac{d+2}{d-2} = 0, \quad (t,x)\in \mathbb{R}\times \mathbb{R}^d;\\
	 	v|_{t=0} = v_0 , ~ v_t|_{t = 0} = v_1 .
	 	\end{array}
	 	\right.
	 	\end{equation}
	 	has a unique solution
	 	$(v(t),v_t(t))\in C(I^*,\mathcal{H}^1),~v\in S_{loc}(I^*)$. $I^*$ is an open interval which contains $0$. The solution satisfies the integral equation, $\forall~t\in I^*$,
	 	\begin{equation*}
	 	v(t) = \cos(t\langle \nabla \rangle)v_0+\frac{\sin t\langle \nabla \rangle}{\langle \nabla \rangle}v_1 - \int_{0}^{t}\frac{\sin(t-s)\langle \nabla \rangle}{\langle \nabla \rangle}(v+F)^\frac{d+2}{d-2}(s)ds.
	 	\end{equation*}
	 	If $I^*\neq \mathbb{R}$, then $\|v\|_{S(I^*)} = \infty$. If $\|v\|_{S(I^*)}<\infty$, then $v$ scatters. It means that there exists $(v_0^\infty,v_1^\infty)\in \mathcal{H}^1$ such that
	 	\begin{equation*}
	 	\lim_{t\rightarrow \infty}\|(v(t),v_t(t))-K(t)(v_0^\infty,v_1^\infty)\|_{\mathcal{H}^1} = 0.
	 	\end{equation*}
	 	The similar statement also holds as $t\rightarrow -\infty$.
	 \end{pro}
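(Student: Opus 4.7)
The plan is a standard fixed-point argument for the Duhamel formulation, followed by a blow-up alternative and a Cauchy-criterion argument for scattering. Throughout I would set $p=\frac{d+2}{d-2}$, so $p-1=\frac{4}{d-2}$, and rely on the single H\"older product estimate
$$\bigl\|v\cdot w^{p-1}\bigr\|_{L^1_tL^2_x(J)}\lesssim\|v\|_{S(J)}\|w\|_{S(J)}^{p-1}$$
which holds because the $p$-fold product of the Strichartz pair $(L_t^{(d+2)/(d-2)},L_x^{2(d+2)/(d-2)})$ lies exactly in $(L_t^1,L_x^2)$. Combined with the pointwise bound $|(a+F)^p-(b+F)^p|\lesssim|a-b|(|a|^{p-1}+|b|^{p-1}+|F|^{p-1})$ and the inhomogeneous Strichartz estimate at the pair $(L^1_tL^2_x,\mathcal{H}^1)$ coming from Lemma \ref{Refinstr}, this single inequality controls every nonlinear term in the argument.

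For local existence near $t=0$ I would fix a small $\eta>0$ and choose $T>0$ so that $\|\pi_1 K(\cdot)(v_0,v_1)\|_{S([-T,T])}+\|F\|_{S([-T,T])}\le\eta$, which is possible by absolute continuity of the Strichartz norm (the linear part is controlled by $\|(v_0,v_1)\|_{\mathcal{H}^1}$ via Strichartz, and the forcing is finite on compact intervals by $F\in S_{loc}$). Setting
$$\Phi(v)(t):=\pi_1K(t)(v_0,v_1)-\int_0^t\frac{\sin((t-s)\langle\nabla\rangle)}{\langle\nabla\rangle}(v+F)^p(s)\,ds,$$
the estimates above show that $\Phi$ is a contraction on the ball $\{\|v\|_{S([-T,T])}\le 2\eta\}$ whose fixed point automatically lies in $C([-T,T],\mathcal{H}^1)$ by applying the inhomogeneous Strichartz estimate with the energy pair on the left. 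Uniqueness in $C(J,\mathcal{H}^1)\cap S(J)$ follows by slicing $J$ into sub-intervals on which both $\|v\|_{S}$ and $\|\tilde v\|_{S}$ are small and rerunning the same contraction on the difference.

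I then glue local solutions to form a maximal interval $I^*$. If $I^*$ had a finite right endpoint $T^*$ and $\|v\|_{S(I^*)}<\infty$, the Duhamel integral would converge absolutely in $\mathcal{H}^1$ at $t=T^*$ (since $\|(v+F)^p\|_{L^1_tL^2_x(I^*)}<\infty$ by the product estimate), furnishing a limit $(v(T^*),v_t(T^*))\in\mathcal{H}^1$. Restarting the local theory from this data, with $T'>0$ chosen so that both Strichartz norms on $[T^*,T^*+T']$ are small, extends $v$ past $T^*$ and contradicts maximality; hence $\|v\|_{S(I^*)}=\infty$ whenever $I^*\neq\mathbb{R}$.

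For scattering, $\|v\|_{S(I^*)}<\infty$ forces $I^*=\mathbb{R}$, and I would implicitly use $F\in S(\mathbb{R})$, which is the case in every later application. Telescoping Duhamel gives
$$K(-t_2)(v(t_2),v_t(t_2))-K(-t_1)(v(t_1),v_t(t_1))=-\int_{t_1}^{t_2}K(-s)\bigl(0,(v+F)^p(s)\bigr)\,ds,$$
and the dual Strichartz estimate bounds the $\mathcal{H}^1$-norm of the right-hand side by $\|(v+F)^p\|_{L^1_tL^2_x([t_1,t_2])}$, which tends to $0$ as $t_1,t_2\to\infty$ by the product inequality and the finiteness of $\|v\|_{S(\mathbb{R})},\|F\|_{S(\mathbb{R})}$. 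Hence $K(-t)(v(t),v_t(t))$ is Cauchy in $\mathcal{H}^1$ and its limit supplies the scattering data $(v_0^\infty,v_1^\infty)$; the case $t\to-\infty$ is symmetric. I expect the only real obstacle to be the careful bookkeeping needed to run everything purely in terms of the Strichartz norm of $F$, with no $\mathcal{H}^1$ control on $F$ at any stage, but this is precisely what the product estimate above is designed to accommodate.
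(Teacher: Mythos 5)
Your argument follows essentially the same contraction-in-Strichartz-norm route as the paper: set up the Duhamel map, close the contraction in $S(I)$ on a small interval where both the free evolution of $(v_0,v_1)$ and $\|F\|_{S(I)}$ are small, recover $C(I,\mathcal{H}^1)$ regularity from the inhomogeneous Strichartz estimate, and deduce the blow-up alternative; the paper actually omits the scattering half entirely, which you supply via the standard Cauchy-in-$K(-t)(v,v_t)$ argument. Your side remark is also apt: the scattering clause of Proposition \ref{localexistence} tacitly needs $\|F\|_{S(\mathbb{R})}<\infty$ rather than merely $F\in S_{loc}(\mathbb{R})$ (otherwise $\|(v+F)^p\|_{L^1_tL^2_x([t_1,\infty))}$ need not vanish), and that is precisely the regime in which the proposition is invoked later in the paper (e.g.\ Propositions \ref{boundenergyimplyscatter} and \ref{uniformenergybyinduction}).
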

 	 \begin{proof}[\textbf{Proof}] 
	 Constructing contraction map in $S(I)$,
	 for $v\in X(I)$, define
	 \begin{equation*}
	 	\mathcal{T}:v \mapsto \cos(t\langle \nabla \rangle)v_0+\frac{\sin t\langle \nabla \rangle}{\langle \nabla \rangle}v_1 - \int_{0}^{t}\frac{\sin(t-s)\langle \nabla \rangle}{\langle \nabla \rangle}(v+F)^\frac{d+2}{d-2}(s)ds.
	 \end{equation*} 
 	Then, by Strichartz estimates,
	 \begin{equation*}
	 	\|\mathcal{T}v\|_{S(I)}\leq \left\|\cos(t\langle \nabla \rangle)v_0+\frac{\sin t\langle \nabla \rangle}{\langle \nabla \rangle}v_1\right\|_{S(I)}+C\|v\|_{S(I)}^\frac{d+2}{d-2}+C\|F\|_{S(I)}^\frac{d+2}{d-2}.
	 \end{equation*}
	 \begin{equation*}
	 	\|\mathcal{T}v_1-\mathcal{T}v_2\|_{S(I)}\leq C\|v_1-v_2\|_{S(I)}(\|v_1\|_{S(I)}^\frac{4}{d-2}+\|v_2\|_{S(I)}^\frac{4}{d-2}+\|F\|_{S(I)}^\frac{4}{d-2}).
	 \end{equation*}
	 \begin{equation*}
	 	\mathcal{D} = \{v\in X(I):\|v\|_{S(I)}\leq \eta \}.
	 \end{equation*}
	 Choose $\eta>0$, such that
	 \begin{equation*}
	 	C\eta ^\frac{4}{d-2}\leq \frac{1}{2},\quad C\big(2\eta^\frac{4}{d-2}+\|F\|^\frac{4}{d-2}_{S(I)}\big)\leq \frac{1}{2}.
	 \end{equation*}
	 Also choose $I$ sufficient small, such that
	 \begin{equation}
	 	\left\|\cos(t\langle \nabla \rangle)v_0+\frac{\sin t\langle \nabla \rangle}{\langle \nabla \rangle}v_1\right\|_{S(I)}+C\|F\|_{S(I)}^\frac{d+2}{d-2}\leq \frac{1}{2}\eta.
	 \end{equation}
	 By contraction mapping principle, we obtain unique fixed point $v\in X(I)$. Also, 
	 \begin{equation*}
	 	\|(v(t),v_t(t))\|_{C(I,\mathcal{H}^1)}\lesssim \|(v_0,v_1)\|_{\mathcal{H}^1}+\|v\|_{S(I)}^\frac{d+2}{d-2}+\|F\|_{S(I)}^\frac{d+2}{d-2}.
	 \end{equation*}
	 From above proof of local existence, we can obtain the following finite time blow up criterion. If $I^* \neq \mathbb{R}$, we have $\|v\|_{S(I^*)} = \infty$.
	\end{proof}
	 
	 \subsection{Global existence}\label{globalexistenceeee}
	 	 For the equation $(\ref{perturb})$, define the energy of $(v,v_t)\in \mathcal{H}^1$ by
	 \begin{equation*}
	 e(t) := E(v(t),v_t(t)) 
	 \end{equation*}
	 \begin{pro}\label{global existence}
	 	Suppose the conditions in Proposition \ref{localexistence}, and assume that $F\in L^1_{t,loc}(\mathbb{R}, L^\frac{2d}{d-4}_x)$, then the solution constructed in Proposition \ref{localexistence} is global. It means $I^* = \mathbb{R}$.
	 \end{pro}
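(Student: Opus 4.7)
The plan is to derive an a priori bound on the energy $e(t)$ on every bounded time-interval contained in the maximal existence interval $I^\ast$, and then leverage Nakanishi's global theorem (Theorem \ref{Nakanishi}) together with a long-time perturbation argument to rule out the only possible obstruction to global existence, namely the blow-up criterion $\|v\|_{S(I^\ast)}=\infty$ from Proposition \ref{localexistence}.

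For the energy estimate, I would differentiate $e(t)$ (formally, then justified by mollifying $F$) using that $v$ solves (\ref{perturb}), and after integration by parts obtain
\begin{equation*}
	e'(t) = -\int_{\mathbb{R}^d} v_t\bigl[(v+F)^{(d+2)/(d-2)} - v^{(d+2)/(d-2)}\bigr]\,dx.
\end{equation*}
The mean-value inequality $|(a+b)^p-a^p|\lesssim (|a|^{p-1}+|b|^{p-1})|b|$ with $p=(d+2)/(d-2)$, followed by Cauchy--Schwarz and Hölder (the exponents pair as $\frac12 = \frac{1}{d/2} + \frac{1}{2d/(d-4)}$), bound this by
\begin{equation*}
	C\|v_t\|_{L^2_x}\Bigl(\|v\|^{4/(d-2)}_{L^{2d/(d-2)}_x}\|F\|_{L^{2d/(d-4)}_x} + \|F\|^{(d+2)/(d-2)}_{L^{2(d+2)/(d-2)}_x}\Bigr).
\end{equation*}
Using $\|v_t\|_{L^2_x}\lesssim e(t)^{1/2}$ and $\|v\|^{4/(d-2)}_{L^{2d/(d-2)}_x}\lesssim e(t)^{2/d}$ from the definition of $e(t)$, together with the elementary fact that $\tfrac{1}{2}+\tfrac{2}{d}\leq 1$ for $d=4,5$ (so $e(t)^{1/2+2/d}\leq 1+e(t)$), one gets a Gronwall-type inequality
\begin{equation*}
	|e'(t)| \leq (A(t)+B(t))(1+e(t)),
\end{equation*}
with $A(t)=C\|F(t)\|_{L^{2d/(d-4)}_x}$ integrable on $[0,T]$ by the added hypothesis $F\in L^1_{t,loc}(L^{2d/(d-4)}_x)$, and $B(t)=C\|F(t)\|^{(d+2)/(d-2)}_{L^{2(d+2)/(d-2)}_x}$ integrable on $[0,T]$ because $F\in S_{loc}(\mathbb{R})$. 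Gronwall then gives $e(t)\leq C(T)$ on every $[0,T]\subset I^\ast$.

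To upgrade the energy bound to a Strichartz bound, I would partition $[0,T]$ into finitely many intervals $I_j=[t_j,t_{j+1}]$ on which $\|F\|_{S(I_j)}$ is small. On each $I_j$ let $\tilde v$ denote the Nakanishi solution of the unperturbed equation (\ref{aim0}) with initial data $(v(t_j),v_t(t_j))$; Theorem \ref{Nakanishi} combined with the energy bound gives $\|\tilde v\|_{S(I_j)}\leq M(C(T))$ uniformly in $j$. View $v$ as a perturbation of $\tilde v$ with forcing $\tilde v^{(d+2)/(d-2)}-(v+F)^{(d+2)/(d-2)}$; the smallness of $\|F\|_{S(I_j)}$ together with an absorbing argument places this in the regime of the standard long-time stability lemma for the defocusing energy-critical Klein--Gordon equation, yielding $\|v\|_{S(I_j)}\leq 2M(C(T))$. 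Summing over $j$ gives $\|v\|_{S(I^\ast)}<\infty$, and by the blow-up criterion of Proposition \ref{localexistence} this forces $I^\ast=\mathbb{R}$.

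The main obstacle is this second step. Because the equation is energy-critical, one cannot simply bootstrap local existence on short subintervals from an energy bound, since the Strichartz norm of the linear evolution from bounded $\mathcal{H}^1$ data need not be small on short intervals. The efficient route around this is precisely to import the global theory of the unperturbed equation through a stability/perturbation argument, as sketched above; the first step is routine once the pairing of Hölder exponents that exactly uses $L^{2d/(d-4)}$ is identified.
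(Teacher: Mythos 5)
Your proposal is correct and follows essentially the same two-step strategy as the paper: a Gronwall-type energy bound on compact subintervals of $I^\ast$ using exactly the Hölder pairing you identify, followed by importing Nakanishi's global theory through a long-time perturbation argument to contradict the blow-up criterion. The only cosmetic differences are that you unify $d=4,5$ via $e^\alpha \leq 1+e$ where the paper treats the two dimensions separately with Young's inequality, and you invoke a black-box stability lemma where the paper writes out the bootstrap on subintervals of small $\|\tilde v\|_S$ in full (the explicit smallness threshold $\eta$ it produces is then reused in the subsequent propositions).
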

  
  	 \begin{proof}[\textbf{Proof}] First, we show that $e(t)$ is bounded in any interval $I\subset \subset I^*$.
  	 	\begin{equation*}
  	 	e'(t) = \int_{\mathbb{R}^d}v_t (v^\frac{d+2}{d-2}-(v+F)^\frac{d+2}{d-2})dx.
  	 	\end{equation*}
  	 	By H\"{o}lder inequality,
  	 	\begin{align*}
  	 		|e'(t)|&\leq C_d\int_{\mathbb{R}^d}|v_t(t)||F(t)|(|v(t)|^\frac{4}{d-2}+|F(t)|^\frac{4}{d-2})dx\\
  	 		&\lesssim_d \|v_t(t)\|_{L^2}\|v(t)\|_{L^\frac{2d}{d-2}}^\frac{4}{d-2}\|F(t)\|_{L^\frac{2d}{d-4}}+\|v_t(t)\|_{L^2}\|F(t)\|_{L^\frac{2(d+2)}{d-2}}^\frac{d+2}{d-2}\\
  	 		&\lesssim_d e(t)^\frac{d+4}{2d}\|F(t)\|_{L^\frac{2d}{d-4}}+e(t)^\frac{1}{2}\|F(t)\|_{L^\frac{2(d+2)}{d-2}}^\frac{d+2}{d-2}.
  	 	\end{align*}
  	 	Thus, we obtain, for $t\geq 0$,
  	 	$$e(t)\leq e(0)+C_d\int_0^t e(s)^\frac{d+4}{2d}\|F(s)\|_{L^\frac{2d}{d-4}}+e(s)^\frac{1}{2}\|F(s)\|_{L^\frac{2(d+2)}{d-2}}^\frac{d+2}{d-2}ds.$$  
  	 	Define	$\tilde{e}(t) = \sup\limits_{0\leq s\leq t}e(s)$.
  	 	
  	 	For $d =4$,
  	 	\begin{align*}
  	 		\tilde{e}(t)&\leq e(0)+C_d\int_0^t \tilde{e}(s)\|F(s)\|_{L^\infty}ds+ C_d\tilde{e}(t)^\frac{1}{2}\|F\|_{S([0,t])}^3\\
  	 		&\leq e(0)+C_d\int_0^t \tilde{e}(s)\|F(s)\|_{L^\infty}ds+ \frac{1}{2}\tilde{e}(t)+4C_d^2\|F\|_{S([0,t])}^6.
  	 	\end{align*} 
  	 	Thus, $\tilde{e}(t)\lesssim e(0)+\|F\|_{S([0,t])}^6+\int_0^t \tilde{e}(s)\|F(s)\|_{L^\infty}ds$. 
  	 	By Gronwall inequality, we obtain
  	 	\begin{equation}\label{en4}
  	 		e(t)\leq \tilde{e}(t) \lesssim \left(e(0) + \|F\|_{S([0,t])}^6\right)\left(1+\|F\|_{L^1([0,t],L^\infty)}e^{\|F\|_{L^1([0,t],L^\infty)}}\right).
  	 	\end{equation} 
  	 	
  	 	For $d= 5$, 
  	 	\begin{align*}
  	 	\tilde{e}(t)&\leq e(0)+ C_d\tilde{e}(t)^\frac{9}{10}\|F\|_{L^1([0,t], L^{10})}+C_d\tilde{e}(t)^\frac{1}{2}\|F\|_{S([0,t])}^\frac{7}{3}\\
  	 	&\leq e(0) + \frac{1}{4}\tilde{e}(t) + \tilde{C}_d\|F\|_{L^1([0,t], L^{10})}^{10}+\frac{1}{4}\tilde{e}(t)+ \tilde{C}_d\|F\|_{S([0,t])}^\frac{14}{3},
  	 	\end{align*}
  	 	We obtain
  	 	\begin{equation}\label{en5}
  	 	e(t)\leq \tilde{e}(t)\lesssim e(0) + \|F\|_{S([0,t])}^\frac{14}{3}+\|F\|_{L^1([0,t],L^{10})}^{10} .
  	 	\end{equation}
  	 	From (\ref{en4}), (\ref{en5}), we know that the energy of $v$ is bounded in any finite time interval if $F\in S_{loc}(\mathbb{R})\cap L^1_{t,loc}(\mathbb{R}, L^\frac{2d}{d-4})$.
  	 	
  	 	Assume $I^* = (-T_*, T^*)$, $T^*\neq \infty$, thus we have $e(t)\leq M, ~\forall ~t\in [0,T^*)$, where $M$ relies on $e(0), \|F\|_{S([0,T^*])}, \|F\|_{L^1([0,T^*], L^\frac{2d}{d-4})}$. By Theorem \ref{Nakanishi}, for $t_0\geq 0$, we have a global solution  $u$ of the equation
  	 	\begin{equation*}
  	 		\left\{
  	 		\begin{array}{l}
  	 			u_{tt}-\Delta u+u+u^\frac{d+2}{d-2} = 0, \quad (t,x)\in \mathbb{R}\times \mathbb{R}^d\\
  	 		u\big|_{t=t_0} = v(t_0), \quad u_t\big|_{t = t_0} = v_t(t_0).
  	 		\end{array}
  	 		\right.
  	 	\end{equation*}
  	 	And $\|u\|_{S(\mathbb{R})}\leq C(M), ~\forall ~0\leq t_0<T^*$. Fixed some sufficient small $\eta >0$ which will be determined by $C(M)$, choose $t_0$ such that $\|F\|_{S([t_0,T^*])}\leq \eta$. Consider the equation for $w = v-u$.
  	 	\begin{equation*}
  	 	\left\{
  	 	\begin{array}{l}
  	 	w_{tt}-\Delta w+w = u^\frac{d+2}{d-2}-(u+F+w)^\frac{d+2}{d-2}, \quad (t,x)\in [t_0,T^*)\times \mathbb{R}^d\\
  	 	w\big|_{t=t_0} = 0, \quad w_t\big|_{t = t_0} = 0.
  	 	\end{array}
  	 	\right.
  	 	\end{equation*}
  	 	Then, we have
  	 	\begin{align*}
  	 		w(t) = \int_{t_0}^t \frac{\sin(t-s)\langle \nabla \rangle}{\langle \nabla \rangle}(u^\frac{d+2}{d-2}-(u+F+w)^\frac{d+2}{d-2})(s)ds.
  	 	\end{align*}
  	 	Choose $\delta>0$ sufficient small which is determined by $d$ . 
  	 	 Decompose $[t_0, T^*)$ into ${N}$ intevals $I_{k}:=[t_{k},t_{k+1})$, $t_0<t_1<\cdots<t_{N} < T^*$, such that $\|u
  	 	\|_{S(I_{k})}\leq \delta, ~k=0,1,\cdots,{N}-1$, and $
  	 	{N}\leq 2 \left(\frac{C(M)}{\delta}\right)^\frac{d+2}{d-2}\lesssim C(M)^\frac{d+2}{d-2}$.
  	 	(We may assume $C(M)\geq 1$). 	For $t\in I_{k}$, 
  	 	\begin{align*}
  	 	w(t) &=\pi_1 K(t-t_{k})(w(t_{k}),w_t(t_{k})) \\
  	 	&\quad-\int_{t_{k}}^{t}\frac{\sin(t-s)\langle \nabla \rangle}{\langle \nabla \rangle}\left((w+u+F)^\frac{d+2}{d-2}(s)-u^\frac{d+2}{d-2}(s)\right)ds.
  	 	\end{align*}
  	 	Then, by Strichartz estimates,
  	 	\begin{align*}
  	 	&\quad~ \|(w(t_{k+1}),w_t(t_{k+1}))\|_{\mathcal{H}^1}+\|w\|_{S(I_{k})}\\
  	 	&\lesssim \|(w(t_{k}),w_t(t_{k}))\|_{\mathcal{H}^1}+ \|w\|_{S(I_k)}^\frac{d+2}{d-2}+\|F\|_{S(I_k)}^\frac{d+2}{d-2}\\
  	 	&\qquad\qquad\qquad\qquad\qquad+(\|w\|_{S(I_{k})}+\|F\|_{S(I_{k})})\|u\|_{S(I_{k})}^\frac{4}{d-2}.
  	 	\end{align*}
  	 	Define
  	 	\begin{equation*}
  	 	A_k :=\|w\|_{S(I_{k})},\quad B_k = \|(w(t_{k}),w_t(t_{k}))\|_{\mathcal{H}^1}.
  	 	\end{equation*}
  	 	Thus, we have
  	 	\begin{equation*}
  	 	A_k\leq CB_k+CA_k^\frac{d+2}{d-2}+C\eta^\frac{d+2}{d-2}+C\delta^\frac{4}{d-2}(A_k+\eta).
  	 	\end{equation*}
  	 	also,
  	 	\begin{equation*}
  	 	B_{k+1}\leq CB_k+CA_k^\frac{d+2}{d-2}+C\eta^\frac{d+2}{d-2}+C\delta^\frac{4}{d-2}(A_k+\eta).
  	 	\end{equation*}
  	 	Choose $\delta$ such that  $C\delta^\frac{4}{d-2} = \frac{1}{2}$. Also $\eta$ small satisfies $2C\eta^\frac{4}{d-2}\leq 1$, 
  	 	then
  	 	\begin{align*}
  	 	A_k&\leq 2CB_k+2CA_k^\frac{d+2}{d-2}+2C\eta^\frac{d+2}{d-2}+2C\delta^\frac{4}{d-2}\eta\\
  	 	&\leq 2CB_k+2CA_k^\frac{d+2}{d-2}+2\eta;\\
  	 	B_{k+1}&\leq CB_k+CA_k^\frac{d+2}{d-2}+C\eta^\frac{d+2}{d-2}+C\delta^\frac{4}{d-2}(A_k+\eta)\\
  	 	&\leq CB_k + CA_k^\frac{d+2}{d-2}+\frac{1}{2}A_k+\eta.
  	 	\end{align*}
  	 	By bootstrap argument, assume
  	 	\begin{equation}\label{bootstrap}
  	 	2C\max_{k=0,1,\cdots,{N}-1}A_k^\frac{4}{d-2}\leq \frac{1}{2}.
  	 	\end{equation}
  	 	We have
  	 	\begin{equation*}
  	 	A_k\leq 4CB_k + 4\eta< 4CB_k + 8\eta,
  	 	\end{equation*}
  	 	\begin{equation*}
  	 	B_{k+1}\leq CB_k + \frac{3}{4}A_k+\eta\leq 4CB_k+4\eta.
  	 	\end{equation*}
  	 	$B_0 = 0$, we obtain
  	 	\begin{align*}
  	 	B_k \leq 4\eta \frac{(4C)^{{N}}-1}{4C-1},\quad k=1,\cdots,{N};\\
  	 	A_k< 4CB_k+8\eta \leq 4\eta \frac{(4C)^{{N}}+4C-2}{4C-1}, \quad k=0,1,\cdots,{N}-1.
  	 	\end{align*}
  	 	From the bootstrap condition (\ref{bootstrap}), we need $\eta$ such that
  	 	\begin{equation*}
  	 	2C\cdot \left(4\eta \frac{(4C)^{{N}}+4C-2}{4C-1}\right)^\frac{4}{d-2}\leq \frac{1}{2}.
  	 	\end{equation*}
  	 	Since $N\lesssim C(M)^\frac{d+2}{d-2}$, it is enough to choose 
  	 	\begin{equation}\label{chooseeta}
  	 			\eta = {(C_0)^{-1}}{\exp({-C_0C(M)^\frac{d+2}{d-2}})}
  	 	\end{equation}
  	 	for some $C_0(C,d)$. For the validity of the bootstrap argument, we use the ``continuity" of $A_k$. To see this, we show the validity of the argument for $A_0$. $A_0 = \|v\|_{S(I_0)}, I_0 = [t_0,t_1)$. Since $f(t):=\|v\|_{S([t_0,t))}$ is continuous function of $t$. $f(t_0) = 0$. Thus, $2Cf(t_0) = 0\leq \frac{1}{2}$. By the former argument, we show that $2Cf(t)\leq \frac{1}{2}$, then $2Cf(t)<\frac{1}{2}-8C\eta$ for $t<t_1$. Thus, $A_0 = f(t_1)$ satisfies $2CA_0\leq \frac{1}{2}$. Similar argument works for other $k$.
  	 	
  	 	Thus, we obtain
  	 	\begin{equation*}
  	 	\|w\|_{S([t_0,t_N))}\leq \left(\sum_{k=0}^{{N}-1}A_k^\frac{d+2}{d-2}\right)^\frac{d-2}{d+2}\leq {N}^\frac{d-2}{d+2}\lesssim C(M).
  	 	\end{equation*}
  	 	Since the estimate is uniform for $t_N<T^*$, we have $\|w\|_{S([t_0,T^*))}\lesssim C(M)$. $\|v\|_{S([t_0,T^*))}\leq \|w\|_{S([t_0,T^*))}+\|u\|_{S([t_0,T^*))}\lesssim C(M)<\infty$. From the blow up criterion, we obtain a contradiction. Thus, $T^* = \infty$, $v$ is global.
  	 \end{proof}
	 	 
	 \subsection{Uniform bound of energy implies scattering}	 
	 We use finite times the argument of the proof of Proposition \ref{global existence} to obtain: 
	 
	 \begin{pro}\label{boundenergyimplyscatter}
	 	Given $M_0:=\|F\|_{S(\mathbb{R})}<\infty$.  Assume that the solution of (\ref{perturb}) $v$ satisfies $$M:=\sup\limits_{t\in I^*}e(t)<\infty$$
	 	Then $v$ is global, and $v\in S(\mathbb{R})$. $C(M)$ is the quantity from Theorem \ref{Nakanishi}. We have
	 	\begin{equation*}
	 		\|v\|_{S(\mathbb{R})}\leq C_d M_0 (C(M)+1)e^{C_d (C(M)+1)^\frac{d+2}{d-2}}.
	 	\end{equation*}
	 	Thus, $v$ scatters. 
	 \end{pro}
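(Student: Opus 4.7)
The plan is to upgrade the finite-horizon argument of Proposition \ref{global existence} to a global Strichartz bound by partitioning $\mathbb{R}$ into finitely many time intervals on each of which $\|F\|_{S}$ is small, and iterating the perturbation/bootstrap argument across them. Once $\|v\|_{S(\mathbb{R})}<\infty$ is in hand, scattering follows from the final claim of Proposition \ref{localexistence} in the usual way (applying Strichartz to the Duhamel term between times $t$ and $t'$ shows the wave profiles $K(-t)(v(t),v_t(t))$ form a Cauchy sequence in $\mathcal{H}^1$).

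First, for any starting time $\tau\in\mathbb{R}$, Theorem \ref{Nakanishi} produces a global solution $u_\tau$ of the unperturbed equation with data $(v(\tau),v_t(\tau))$, whose energy equals $e(\tau)\leq M$, hence $\|u_\tau\|_{S(\mathbb{R})}\leq C(M)$. Next, set the smallness parameter $\eta$ exactly as in (\ref{chooseeta}), i.e.\ $\eta=C_0^{-1}\exp(-C_0(C(M)+1)^{(d+2)/(d-2)})$, and partition $\mathbb{R}$ into finitely many consecutive intervals $\{J_i=[\tau_i,\tau_{i+1})\}_{i=1}^K$ with $\|F\|_{S(J_i)}\leq \eta$; since the Strichartz norm is $\ell^{(d+2)/(d-2)}$-additive over disjoint time intervals, one can do so with $K\lesssim (M_0/\eta)^{(d+2)/(d-2)}$.

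On each $J_i$, repeat the argument from Proposition \ref{global existence} verbatim, taking $u_{\tau_i}$ as the reference Nakanishi solution and $w=v-u_{\tau_i}$, further decomposing $J_i$ into subintervals on which $\|u_{\tau_i}\|_{S}\leq \delta$ and running the bootstrap on $(A_k,B_k)$. The initial discrepancy on $J_i$ is $0$ and the choice of $\eta$ guarantees the bootstrap closes; this produces the uniform bound $\|v\|_{S(J_i)}\leq \|w\|_{S(J_i)}+\|u_{\tau_i}\|_{S(\mathbb{R})}\lesssim C(M)+1$ independently of $i$. Summing in $\ell^{(d+2)/(d-2)}$ across the $K$ intervals gives
\begin{equation*}
\|v\|_{S(\mathbb{R})} \lesssim K^{(d-2)/(d+2)}(C(M)+1) \lesssim M_0(C(M)+1)\exp\bigl(C_d(C(M)+1)^{(d+2)/(d-2)}\bigr),
\end{equation*}
matching the stated bound up to redefining the constant $C_d$.

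The proof requires no new analytic input beyond Proposition \ref{global existence}; the only delicate point, and the expected main obstacle, is the bookkeeping of constants. The smallness parameter $\eta$ must be chosen in terms of $C(M)$ as in (\ref{chooseeta}), and this same $\eta$ then determines the number $K$ of intervals in the partition of $\mathbb{R}$, so the exponential factor in $C(M)$ appears through $K$ rather than through the per-interval estimate. One must verify that the per-interval bound does not accumulate geometrically when passing from $J_i$ to $J_{i+1}$—which is guaranteed because each $J_i$ is restarted with a \emph{new} Nakanishi solution $u_{\tau_i}$ built from the current data $(v(\tau_i),v_t(\tau_i))$, whose Strichartz norm is controlled solely by the uniform energy bound $M$ and not by the number of previous intervals.
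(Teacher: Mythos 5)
Your proposal is correct and follows essentially the same route as the paper: decompose the time line into $\lesssim (M_0/\eta)^{(d+2)/(d-2)}$ intervals on each of which $\|F\|_S\leq\eta$ with $\eta$ chosen as in (\ref{chooseeta}), restart the comparison argument of Proposition \ref{global existence} on each interval with a fresh Nakanishi solution whose Strichartz norm is controlled only by $M$, and then sum in $\ell^{(d+2)/(d-2)}$. The paper states this more tersely (``from the proof of Proposition \ref{global existence}, we have $\|v\|_{S(I_j)}\lesssim C(M)$''), but the content — in particular the key observation that the per-interval bound does not compound because each interval is reinitialized with data controlled by the uniform energy bound $M$ — is identical to what you spell out.
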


	 The related argument was obtained in \cite{DLM} for wave equation. The argument here is almost the same.
	 
	 \begin{proof}[\textbf{Proof}]
	 Decompose $I^*$ to $\tilde{N}$ intevals $I_j, j=1,2,\cdots, \tilde{N}$, such that
	 	$\|F\|_{S(I_j)} \leq \eta$, $\tilde{N} \lesssim \left(\frac{M_0}{\eta}\right)^\frac{d+2}{d-2}$ where the $\eta$ is choosen as in the proof of Proposition \ref{global existence} by (\ref{chooseeta}). From the proof of Proposition \ref{global existence}, we have $\|v\|_{S(I_j)}\lesssim C(M)$. Thus,
	 \begin{align*}
	 	\|v\|_{S(I^*)}&\leq \left(\sum_{k=1}^{{\tilde{N}}}\|v\|_{S(I_j)}^\frac{d+2}{d-2}\right)^\frac{d-2}{d+2}\lesssim C(M)\tilde{N}^\frac{d-2}{d+2}\\
	 	&\lesssim M_0C(M)\exp(C_0C(M)^\frac{d+2}{d-2}).
	 \end{align*}
	We obtain that $v$ scatters by Proposition \ref{localexistence}.
	\end{proof}
	 
	 \subsection{Uniform energy bound by induction on scales}\label{scatteringsection}
	 To describe the conditions on $F$ to obtain the uniform energy bound as was shown in section 5, 6 in \cite{Bringmann1}, we need some notations for the decomposition of $\mathbb{R}_t\times \mathbb{R}^d_x$, the local energy and the local nonlinear force term.
	 $$K_{t_0,x_0}^N:= \{(t,x):t_0\leq t\leq t_0+N,|x-x_0|_\infty\leq 2N-t+t_0\},$$
	 $$K_{t_0,x_0}^N(t):=\{x:(t,x)\in K_{t_0,x_0}^N\}.$$
	 \begin{equation*}
	 	\tilde{K}_{t_0,x_0}^N:= \{(t,x):t_0\leq t\leq t_0+N,|x-x_0|_\infty\leq 10N-t+t_0\},
	 \end{equation*}
	 \begin{equation*}
	 	\tilde{K}_{t_0,x_0}^N(t):=\{x:(t,x)\in \tilde{K}_{t_0,x_0}^N\}.
	 \end{equation*}
	 For $u\in C(\mathbb{R}, H^1)\cap C^1(\mathbb{R}, L^2)$, $0<\delta<\frac{1}{50}$ define
	 \begin{equation*}
	 \tilde{\mathcal{E}}_{t_0,x_0}^N[u] = \sup_{t_0\leq t \leq t_0+N}\int_{ \tilde{K}_{t_0,x_0}^N(t)}\frac{1}{2}u_t^2+\frac{1}{2}u^2+\frac{1}{2}|\nabla u|^2+\frac{d-2}{2d}u^\frac{2d}{d-2}, 
	 \end{equation*}
	 \begin{equation*}
	 \tilde{\mathcal{F}}_{t_0,x_0}^{N,\delta}[u] = \sup_{\substack{t':t_0\leq t'\leq t_0+N\\x':|x'-x_0|\leq 3N}}\int_{t_0}^{t_0+N}\int_{\substack{\left||x-x'|-|t-t'|\right|\leq N^{10\delta}}}u(t,x)^{\frac{2d}{d-2}}dxdt.
	 \end{equation*} 
	 Define $\|f\|_{\tilde{S}(U)}:=\|f\chi_U\|_{S(\mathbb{R})}$ for $U$ bounded region in $\mathbb{R}\times \mathbb{R}^d$.
	 
	 \begin{pro}\label{uniformenergybyinduction}
	 	For $d = 4,5$.
	 	Given $\delta > 0, \theta> 0, \alpha > \theta+20\delta, \beta > 0$, there exists $\eta>0$ relies on $(\alpha,\beta,\theta,\delta)$, if $F = \sum\limits_{N\in 2^{\mathbb{N}_0}} F_N$, such that
	 	\begin{itemize}
	 		\item [$(i)$.] $F\in S(\mathbb{R})$,
	 		\item [$(ii)$.] $\|F\|_{L_t^1(I, L_x^\frac{2d}{d-4})}\leq \eta, ~\forall$ interval $I\subset \mathbb{R}$ with length $1$, 
	 		\item [$(iii)$.] $\|F_N\|_{L_t^1([N^{1+\theta},\infty),L_x^\infty)}\leq \eta N^{-\beta}$,
	 		\item [$(iv)$.] $t_0 = 0, N, 2N,\cdots,\lfloor N^\theta \rfloor N$, $x_0\in N\mathbb{Z}^d$.
	 		\begin{equation}
	 		\int_{K_{t_0,x_0}^N} |F_M ||u_t||u|^{\frac{4}{d-2}} \leq \eta M^{-\alpha}(\tilde{\mathcal{E}}_{t_0,x_0}^N[u]+\tilde{\mathcal{F}}_{t_0,x_0}^{N,\delta}[u]),
	 		\end{equation}
	 		for all dyadic $M\geq N$, $\forall~u\in C(\mathbb{R},{H}^1)\cap C^1(\mathbb{R},L^2)$.
	 	\end{itemize}	
 		Then, the solution $v$ of equation (\ref{perturb}) is global and scatters.
	 \end{pro}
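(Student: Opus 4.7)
The plan is to reduce the entire statement to an a priori uniform energy bound $\sup_{t\in I^*}e(t)\leq E_*$, and then to produce that bound by induction on dyadic scales. Conditions (i) and (ii) imply $F\in S_{loc}(\mathbb{R})\cap L^1_{t,loc}(\mathbb{R},L^{2d/(d-4)}_x)$, so Proposition \ref{global existence} gives $I^*=\mathbb{R}$; once the uniform bound is available, Proposition \ref{boundenergyimplyscatter} automatically upgrades it to $v\in S(\mathbb{R})$ together with scattering. The only real content of the proposition is therefore the uniform energy bound.

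To produce that bound I would run the induction-on-scales scheme of \cite{Bringmann1}. The inductive statement is that there exist constants $E_*, C_*$, depending only on $e(0)$, $\|F\|_{S(\mathbb{R})}$, $\eta$ and the parameters $(\alpha,\beta,\theta,\delta)$, such that for every dyadic $N\geq 1$, every $t_0\in\{0,N,2N,\ldots,\lfloor N^\theta\rfloor N\}$ and every $x_0\in N\mathbb{Z}^d$,
\[
\tilde{\mathcal{E}}^N_{t_0,x_0}[v]\leq E_*,\qquad \tilde{\mathcal{F}}^{N,\delta}_{t_0,x_0}[v]\leq C_* N^{10\delta}.
\]
Since every $t\in\mathbb{R}$ lies in the top of one such truncated cone as soon as $N$ is large enough, this collection of local estimates implies the global bound $\sup_t e(t)\leq E_*$. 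The base case $N=1$ reduces to finitely many unit time intervals and follows directly from the Gronwall argument of Proposition \ref{global existence} fed with condition (ii). For the inductive step from scale $N/2$ to $N$, I would use the local energy identity in $K^N_{t_0,x_0}$ (finite speed of propagation) together with a dyadic decomposition of the forcing term $F=\sum_M F_M$. The contribution of $F_M$ with $M\geq N$ is controlled by hypothesis (iv); summed over $M\geq N$ it yields $\lesssim \eta N^{-\alpha}(\tilde{\mathcal{E}}^N+\tilde{\mathcal{F}}^{N,\delta})$, and since $\alpha>\theta+20\delta$ and the induction traverses $\lesssim N^\theta$ time steps of length $N$, choosing $\eta$ small lets these errors be absorbed by the left-hand side. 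The contribution of $F_M$ with $M<N$ is handled by condition (iii), which on the window $t\geq M^{1+\theta}$ supplies the decay $\eta M^{-\beta}$, combined with the inductive hypothesis applied at the smaller reference scale $M$ to handle the complementary short window. The flux quantity $\tilde{\mathcal{F}}^{N,\delta}$ is propagated in parallel by a localized Morawetz-type identity coupled with the bush argument of \cite{Bringmann1}.

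The principal obstacle, as flagged in the abstract, is the case $d=5$. Because $H^1(\mathbb{R}^5)$ does not embed into $L^\infty$, the forcing term $\int v_t F v^{4/(d-2)}$ cannot be closed by $\|F\|_{L^\infty_x}$ alone, and the $L^{10}_x$-norm of $F$ is what naturally appears, as is already visible in estimate (\ref{en5}). In the wave-equation treatment of \cite{Bringmann1} the analogous steps rely only on dispersive decay, but here the Klein--Gordon mass term $+u$ contributes $\tfrac{1}{2}\int u^2$ to the conserved energy and hence provides uniform $L^2_x$ control of $v$. This extra control makes the H\"{o}lder redistribution $\|v_t\|_{L^2_x}\|v\|_{L^{10/3}_x}^{4/3}\|F\|_{L^{10}_x}$ admissible with an $e$-exponent $9/10<1$, so that Gronwall closes. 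In practice every step of the induction in dimension $5$ which would otherwise appeal to $L^\infty_x$ must be reworked using a suitable $L^{10}_x$-type bound obtained by combining the frequency localization of $F_M$ via Bernstein with the mass-term $L^2$ control; the remaining bookkeeping is cosmetic. Once all such estimates are in place, choosing $\eta=\eta(\alpha,\beta,\theta,\delta,E_*)$ small enough to absorb the error at each inductive step, the uniform bound $E_*$ propagates to all scales, which is what we needed.
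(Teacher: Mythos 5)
Your proposal correctly identifies the overall skeleton — conditions (i), (ii) give global existence via Proposition~\ref{global existence}, and once a uniform energy bound is in hand Proposition~\ref{boundenergyimplyscatter} finishes the job — but the inductive statement you set up does not close, and you miss the key localization device that makes the paper's induction work.

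\textbf{The circularity in your inductive claim.} You propose to prove, by induction on dyadic $N$, that $\tilde{\mathcal{E}}^N_{t_0,x_0}[v]\leq E_*$ and $\tilde{\mathcal{F}}^{N,\delta}_{t_0,x_0}[v]\leq C_*N^{10\delta}$ with constants $E_*,C_*$ uniform over all $N$, all $t_0\in\{0,N,\ldots,\lfloor N^\theta\rfloor N\}$ and all $x_0$. But the natural route to bound $\tilde{\mathcal{E}}^N_{t_0,x_0}$ is an energy identity in the cone whose right-hand side starts from $e_{t_0,x_0}^N$, the local energy of $v$ at time $t_0$; when $t_0\sim N^{1+\theta}$ this quantity depends on the evolution of $v$ up to time $N^{1+\theta}$, i.e. on cone estimates at scales much larger than $N$. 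You cannot obtain it from the inductive hypothesis at scales $<N$. The paper sidesteps this by proving not an absolute bound but the \emph{increment} estimate
\[
\tilde{e}_{t_0,x_0}^N \leq 2C_0\,e_{t_0,x_0}^N + B\|F\|^{2(d+2)/(d-2)}_{\tilde{S}(K_{t_0,x_0}^N)},\qquad
\tilde{f}_{t_0,x_0}^N \leq B N^{10\delta}\big(e_{t_0,x_0}^N+\|F\|^{2(d+2)/(d-2)}_{\tilde{S}(K_{t_0,x_0}^N)}\big),
\]
uniformly over all cones in the admissible family. This propagates by induction precisely because the right side is expressed in terms of the \emph{same-scale} local initial energy, not a universal constant, and the uniform global bound is only extracted at the very end by specializing to $t_0=0$, $x_0=0$ and sending $N\to\infty$.

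\textbf{The missing localization.} The increment estimate above is not stated for $v$ directly but for the localized solutions $w_{t_0,x_0}^N$ of equation~(\ref{local}): fresh solutions launched at time $t_0$ from the restriction of $(v(t_0),v_t(t_0))$ to $|x-x_0|_\infty\leq 2N$ extended with controlled energy, with the forcing truncated to $\chi_{K_{t_0,x_0}^N}F$. This is essential: the definitions of $\tilde{\mathcal{E}}^N_{t_0,x_0}$ and $\tilde{\mathcal{F}}^{N,\delta}_{t_0,x_0}$ involve the \emph{enlarged} cone $\tilde{K}_{t_0,x_0}^N$ (radius $10N$ at time $t_0$), on which the global $v$ sees both initial data and forcing outside the reference cone and cannot be controlled by $e_{t_0,x_0}^N$ alone. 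Finite speed of propagation identifies $w_{t_0,x_0}^N$ with $v$ inside $K_{t_0,x_0}^N$, which is all that is needed when the small-cone estimates are fed into the large cone. Without this device the energy identity over $\tilde{K}_{t_0,x_0}^N$ does not close.

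Two smaller misattributions. The bush decomposition of Bringmann plays no role in this proposition: the flux quantity $\tilde{\mathcal{F}}$ is propagated by the local flux/Morawetz identity of Lemma~\ref{estimateet0x0N} alone; the bush argument enters only later, in Proposition~\ref{difficultest}, when verifying hypothesis~(iv) for the random data. And your description of how the mass term enters in $d=5$ (the $L^{10}_x$--Gronwall closure) belongs to Proposition~\ref{global existence}; in the present induction the mass term's contribution is via the factor $\|w\|_{L^\infty_t L^2_x}^{(d-4)/2}$ in the Hölder split used for the $M<N$, $t\geq M^{1+\theta}$ window, which for $d=5$ is exactly the half power of $L^2$ control that the Klein--Gordon mass provides.
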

 
 	\begin{rem}
 		The essential part of the proof Proposition \ref{uniformenergybyinduction} is the induction on scales argument from \cite{Bringmann1}. We need not to assume that  $F_N$ is $P_N F$ here. When we apply this proposition to (\ref{equationsearch}), $F_N$ is in fact the part of dyadic frequence decomposition of $F$.
 	\end{rem}

	 \begin{proof}[\textbf{Proof}] 
	 Due to the conditions (i), (ii) of $F$, we konw that the solution of (\ref{perturb}) $v$ is global by Proposition \ref{global existence}. By Proposition \ref{uniformenergybyinduction}, we only need to show that $E(v(t),v_t(t))$ is uniformly bounded. Consider the localized equation of $v$,
	 \begin{equation}\label{local}
	 	\left\{
	 	\begin{array}{l}
	 		w_{tt}-\Delta w + w + (w+F\chi_{K_{t_0,x_0}^N})^\frac{d+2}{d-2}=0,\quad (t,x)\in \mathbb{R}\times \mathbb{R}^d;\\
	 		(w(t_0),w_t(t_0))|_{\substack{|x-x_0|_{\infty}\leq 2N}} = (v(t_0),v_t(t_0))|_{|x-x_0|_\infty\leq 2N}.
	 	\end{array}
	 	\right.
	 \end{equation}
	 We extend $(w(t_0),w_t(t_0))$ to be a function in $\mathcal{H}^1$, such that
	 \begin{align*}
	 	E(w(t_0),w_t(t_0))&\leq C_0\int_{|x-x_0|_\infty\leq 3N}v_t(t_0)^2+v(t_0)^2+|\nabla v(t_0)|^2 + v(t_0)^\frac{2d}{d-2}\\
	 	&:=C_0e_{t_0,x_0}^N.
	 \end{align*}
	 The constant $C_0$ depends on dimension $d$ only.
	 For the solution of $(\ref{local})$, denoted by $w_{t_0,x_0}^N\in C(\mathbb{R}, H^1)\cap C^1(\mathbb{R}, L^2)$, define
	 $$\tilde{e}_{t_0,x_0}^N := \mathcal{\tilde{E}}_{t_0,x_0}^N[w_{t_0,x_0}^N],\quad \tilde{f}_{t_0,x_0}^N:=\mathcal{\tilde{F}}_{t_0,x_0}^{N,\delta}[w_{t_0,x_0}^N].$$	 
	 From finite speed of propagation, we have
	 $w_{t_0,x_0}^N|_{K_{t_0,x_0}^N} = v|_{K^N_{t_0,x_0}}$.
	 Using the stratage of Bringmann \cite{Bringmann1}, we prove for some $B>0$, relies on $d, C_0$, such that
	 \begin{equation}\label{ener}
	 	\tilde{e}_{t_0,x_0}^N \leq 2C_0e_{t_0,x_0}^N+B \|F\|^\frac{2(d+2)}{d-2}_{\tilde{S}(K_{t_0,x_0}^N)},
	 \end{equation}
	 \begin{equation}\label{forc}     	
	 	\tilde{f}_{t_0,x_0}^N\leq BN^{10\delta}\left(e_{t_0,x_0}^N+ \|F\|^\frac{2(d+2)}{d-2}_{\tilde{S}(K_{t_0,x_0}^N)}\right),
	 \end{equation}	 	 
	 uniformly for $t_0\in N\mathbb{N}_0,x_0\in N\mathbb{Z}^d,N\in 2^{\mathbb{N}_0}$. To show (\ref{ener}), (\ref{forc}). We start from two estimates obtained by energy method. 
	 \begin{lem}\label{estimateet0x0N}
	 	For some $C>1$ depend on dimension $d$ only, we have
	 	\begin{equation}\label{t0x0ne}
	 		\tilde{e}_{t_0,x_0}^N\leq \frac{8}{7}C_0e_{t_0,x_0}^N+C\|F\|^\frac{2(d+2)}{d-2}_{\tilde{S}(K_{t_0,x_0}^N)}+C\int_{K_{t_0,x_0}^N}|F||w_t||w|^{\frac{4}{d-2}}.
	 	\end{equation}
	 	\begin{equation}\label{t0x0nf}
	 		\tilde{f}_{t_0,x_0}^N \leq CN^{10\delta}\left(\tilde{e}_{t_0,x_0}^N+\|F\|^\frac{2(d+2)}{d-2}_{\tilde{S}(K_{t_0,x_0}^N)} + \int_{ K_{t_0,x_0}^N}|F||w_t||w|^\frac{4}{d-2}\right).
	 	\end{equation}
	 \end{lem}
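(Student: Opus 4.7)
The plan is to deduce both estimates from multiplier identities applied to the localized equation (\ref{local}), splitting the inhomogeneous defect $(w+F\chi_K)^{p}-w^{p}$ with $p=\tfrac{d+2}{d-2}$ by the elementary pointwise bound $\bigl|(a+b)^{p}-a^{p}\bigr|\lesssim |b|\,|a|^{p-1}+|b|^{p}$. The first piece of this splitting is exactly the $\int_K|F|\,|w_t|\,|w|^{4/(d-2)}$ term that already appears on the right-hand sides of (\ref{t0x0ne}) and (\ref{t0x0nf}); the second piece, $\int_K|F|^p|w_t|$, is treated uniformly by H\"older in $(t,x)$ and Young's inequality to yield a small multiple of $\tilde e_{t_0,x_0}^{N}$ plus $C\|F\|^{2p}_{\tilde S(K_{t_0,x_0}^N)}$.

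For the energy estimate (\ref{t0x0ne}) I multiply (\ref{local}) by $w_t$ and integrate over the shrinking $\ell^\infty$-cube $\tilde K_{t_0,x_0}^N(t)$. The local energy identity
\[
\partial_t e(w)-\nabla\cdot(w_t\nabla w)=-w_t\bigl[(w+F\chi_K)^{p}-w^{p}\bigr],
\]
combined with the fact that each face of the cube retreats inward at exactly the speed of propagation, gives a nonpositive boundary flux: Cauchy-Schwarz yields $|w_t\,\partial_{x_i}w|\leq e(w)$ on each face, matching the unit retreat speed. Integration in $t$ from $t_0$ then controls $\tilde e_{t_0,x_0}^{N}$ by the initial energy $C_0 e_{t_0,x_0}^{N}$ plus the two defect pieces. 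The $\int_K|F|^{p}|w_t|$ piece is absorbed as $\tfrac18\tilde e_{t_0,x_0}^{N}+C\|F\|^{2p}_{\tilde S(K_{t_0,x_0}^N)}$, and moving $\tfrac18\tilde e_{t_0,x_0}^{N}$ to the left produces the factor $\tfrac87 C_0$ on the initial-data term.

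For (\ref{t0x0nf}) the strategy is parallel but uses a Morawetz-type multiplier localized to the null slab $\{||x-x'|-|t-t'||\leq N^{10\delta}\}$ around the base point $(t',x')$. Concretely, one multiplies (\ref{local}) by a radial-gauge multiplier of the form $\phi_{N^{10\delta}}\bigl(|x-x'|-(t-t')\bigr)\cdot\bigl(\partial_r w+\tfrac{d-1}{2r}w\bigr)$, where $r=|x-x'|$ and $\phi_\ell$ is a smooth bump of unit mass on scale $\ell$. The resulting identity on $\tilde K_{t_0,x_0}^N$ places the desired spacetime integral of $w^{2d/(d-2)}$ over the slab on one side; boundary and cutoff-derivative contributions are bounded by $N^{10\delta}\tilde e_{t_0,x_0}^{N}$, the Klein-Gordon mass term $+w$ contributes favorable signs, and the $F$-defect splits exactly as in the first estimate, picking up the $N^{10\delta}$ factor to produce $CN^{10\delta}\|F\|^{2p}_{\tilde S(K_{t_0,x_0}^N)}+CN^{10\delta}\int_K|F||w_t||w|^{p-1}$. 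Taking the supremum over $(t',x')$ in the compact region $[t_0,t_0+N]\times\{|x'-x_0|\leq 3N\}$ costs only constants. The main obstacle is this Morawetz step: one must verify that derivatives of the cutoff produce only controllable errors and that the multiplicative loss is genuinely $N^{10\delta}$ rather than $N$; once the identity is cleanly set up, the $F$-defect bookkeeping is identical to the energy case.
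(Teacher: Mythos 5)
Your argument for (\ref{t0x0ne}) is essentially the paper's: multiply by $w_t$, use the pointwise bound $|(a+b)^p - a^p| \lesssim |b|(|a|^{p-1}+|b|^{p-1})$ on the defect, absorb $\tfrac18 \tilde e^N_{t_0,x_0}$ to produce the $\tfrac{8}{7}C_0$ prefactor. (The paper works with the global energy $E(w(t),w_t(t))$ of the extended-$\mathcal{H}^1$ solution rather than integrating over the shrinking cube directly, but the flux estimate you describe is an equivalent implementation.) That part is fine.

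For (\ref{t0x0nf}) you propose a genuinely different and substantially heavier mechanism, and it has a real gap. The paper does not use a Morawetz multiplier at all. It uses the plain energy flux identity through backward and forward light cones $\{|x-x'|=s\pm(t'-t)\}$: differentiating $\int_{|x-x'|\le s+t'-t}[\text{energy density}]\,dx$ in $t$ produces a boundary flux in which, after Cauchy--Schwarz on $w_t\,\partial_n w$, the kinetic and gradient terms cancel and the remainder $-\tfrac{d-2}{2d}\int_{\text{cone}} w^{2d/(d-2)}\,d\sigma$ comes out for free. That directly bounds the \emph{unweighted} surface integral of $w^{2d/(d-2)}$ on each cone by $\tilde e^N_{t_0,x_0}$ plus the defect. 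Integrating the cone offset $s$ over $[-N^{10\delta},N^{10\delta}]$ then fills the null slab $\{||x-x'|-|t-t'||\le N^{10\delta}\}$ by the coarea formula, and that one-dimensional integration is exactly where the $N^{10\delta}$ comes from.

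Your truncated Morawetz multiplier $\phi_{N^{10\delta}}(|x-x'|-(t-t'))\bigl(\partial_r w + \tfrac{d-1}{2r}w\bigr)$ faces two unresolved difficulties, which you yourself flag as ``the main obstacle'' without closing them. First, the Morawetz identity with $\partial_r w + \tfrac{d-1}{2r}w$ naturally outputs the $r^{-1}$-weighted spacetime integral $\iint r^{-1}w^{2d/(d-2)}$, not the unweighted slab integral; in the slab $r$ ranges up to order $N$, so passing from the weighted to the unweighted quantity costs a factor of $N$, not $N^{10\delta}$. Second, the $t$-derivative of the cutoff $\phi_{N^{10\delta}}(|x-x'|-(t-t'))$ is of size $N^{-10\delta}$ and is supported on the whole slab, so the cutoff-derivative error is not manifestly small and can destroy the sign structure that makes the Morawetz identity useful in the first place. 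Until both of these are addressed, the proposal does not establish (\ref{t0x0nf}); and since a much simpler flux-plus-coarea argument already gives it cleanly, the Morawetz route is both harder and, as written, incomplete.
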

 	 We prove these estimates at the end of this section. We continue to show (\ref{ener}), (\ref{forc}).
 	 To abbreviate the notations, we use $w$ to represent $w_{t_0,x_0}^N$ when there is no confusion.
 	 
 	 For $N = 1$, $\forall~t\in [t_0,t_0+1]$, $\|w(t)\|_{L^\frac{2d}{d-2}}\lesssim \|\nabla w(t)\|_{L^2}\lesssim (\tilde{e}^N_{t_0,x_0})^\frac{1}{2}$, and also $\|w(t)\|_{L^\frac{2d}{d-2}}\lesssim (\tilde{e}^N_{t_0,x_0})^\frac{d-2}{2d}$, thus, for $d = 4,5$, we have $\|w(t)\|_{L^\frac{2d}{d-2}}\lesssim (\tilde{e}_{t_0,x_0}^N)^\frac{d-2}{8}$. Then, we have
 	 \begin{align*}
 	 	&\quad\int_{K_{t_0,x_0}^N}|F||w_t||w|^\frac{4}{d-2}\\
 	 	&\leq \|w_t\|_{L^\infty L^2(K_{t_0,x_0}^N)}\|w\|_{L^\infty L^\frac{2d}{d-2}(K_{t_0,x_0}^N)}^\frac{4}{d-2}\|F\|_{L^1 L^\frac{2d}{d-4}(K_{t_0,x_0}^N)}\\
 	 	&\lesssim \eta\tilde{e}_{t_0,x_0}^N.
 	 \end{align*}
 	 For $\eta<\frac{1}{2}C_0, B>2C$, we obtain (\ref{ener}), (\ref{forc}) with $N=1$.
 	 
	 Now, assume for $M<N, M\in 2^{\mathbb{N}_0}$, (\ref{ener}), (\ref{forc}) is right. Then, for $N$, we need to estimate the last term in (\ref{t0x0ne}).
	 \begin{align*}
	 \int_{K_{t_0,x_0}^N}|F||w_t||w|^{\frac{4}{d-2}}
	 &\leq \sum_{M}\int_{ K_{t_0,x_0}^N}|F_M||w_t||w|^\frac{4}{d-2}.
	 \end{align*}
	 For $M\geq N$, we use the condition (iv) of $F$,
	 \begin{equation}\label{largesale}
	 \begin{array}{ll}
     \sum\limits_{M\geq N}\int_{ K_{t_0,x_0}^N}|F_M||w_t||w|^\frac{4}{d-2}
	 &\leq \sum\limits_{M\geq N}\eta
	 M^{-\alpha}(\tilde{e}_{t_0,x_0}^N+\tilde{f}_{t_0,x_0}^N)\\
	 &\leq C_{\alpha}\eta N^{-\alpha}(\tilde{e}_{t_0,x_0}^N+\tilde{f}_{t_0,x_0}^N)
	 \end{array}
	 \end{equation}
	 for $M<N$, we use the induction hypothesis,
	 \begin{align*}
	 	\int_{ K_{t_0,x_0}^N}&|F_M||w_t||w|^\frac{4}{d-2}\leq \sum_{\tau\in 0,M,\cdots, \lfloor{M^\theta}\rfloor M}\sum_{\substack{y\in M\mathbb{Z}^d\\K_{\tau,y}^M\subset K^N_{t_0,x_0}}}\int_{ K_{\tau,y}^M}|F_M||w_t||w|^\frac{4}{d-2}\\
	 	&\quad\quad\quad\qquad\qquad\qquad+\int_{ K_{t_0,x_0}^N\cap [M^{1+\theta},\infty)\times \mathbb{R}^d}|F_M||w_t||w|^\frac{4}{d-2}.
	 \end{align*}
	 Note that in $K^M_{\tau,y}$, $w_{t_0,x_0}^N=v=w_{\tau,y}^M$. For the first term, by (\ref{ener}), (\ref{forc}), the almost disjoint property of $K_{\tau,y}^M$, and Minkonwski inequality, we have
	 \begin{align*}
	 	&\quad\sum_{\tau\in 0,M,\cdots, \lfloor{M^\theta}\rfloor M}\sum_{\substack{y\in M\mathbb{Z}^d\\K_{\tau,y}^M\subset K^N_{t_0,x_0}}}\int_{ K_{\tau,y}^M}|F_M||w_t||w|^\frac{4}{d-2}\\
	 	&\leq \sum_{\tau\in 0,M,\cdots, \lfloor{M^\theta}\rfloor M}\sum_{\substack{y\in M\mathbb{Z}^d\\K_{\tau,y}^M\subset K^N_{t_0,x_0}}}\eta M^{-\alpha}(\tilde{e}^M_{\tau,y}+\tilde{f}^M_{\tau,y})\\
	 	&\lesssim_B  \sum_{\tau\in 0,M,\cdots, \lfloor{M^\theta}\rfloor M}\sum_{\substack{y\in M\mathbb{Z}^d\\K_{\tau,y}^M\subset K^N_{t_0,x_0}}}\eta M^{-\alpha+10\delta}({e}^M_{\tau,y}+\|F\|_{\tilde{S}(K_{\tau,y}^M)}^\frac{2(d+2)}{d-2})\\
	 	&\lesssim_B \eta M^{-\alpha+\theta+10\delta}({e}^M_{\tau,y}+\|F\|_{\tilde{S}(K_{t_0,x_0}^N)}^\frac{2(d+2)}{d-2}).
	 \end{align*}
	 For the second term, we use (iii),
	 \begin{align*}
	 	&\int_{ K_{t_0,x_0}^N\cap [M^{1+\theta},\infty)\times \mathbb{R}^d}|F_M||w_t||w|^\frac{4}{d-2}\\
	 	&\leq \|F_M\|_{L^1([M^{1+\theta},\infty),L^\infty)}\|w_t\|_{L^\infty L^2(K_{t_0,x_0}^N)}\|w\|^\frac{d-4}{2}_{{L^\infty L^2(K_{t_0,x_0}^N)}}\|w\|^{\frac{d(6-d)}{2(d-2)}}_{L^\infty L^\frac{2d}{d-2}(K_{t_0,x_0}^N)}\\
	 	&\leq \eta M^{-\beta}\tilde{e}_{t_0,x_0}^N.
	 \end{align*}
	 Thus, we obtain
	 \begin{equation}\label{smallscale}
	 	\sum_{M< N}\int_{ K_{t_0,x_0}^N}|F_M||w_t||w|^\frac{4}{d-2}\lesssim_B \eta \tilde{e}^N_{t_0,x_0}+\eta\|F\|^\frac{2(d+2)}{d-2}_{\tilde{S}(K_{t_0,x_0}^N)}.
	 \end{equation}
	 Combining (\ref{t0x0ne}), (\ref{largesale}), (\ref{smallscale}), we have
	 \begin{align*}
	 	\tilde{e}_{t_0,x_0}^N&\leq \frac{8}{7}C_0 e_{t_0,x_0}^N+C\|F\|^\frac{2(d+2)}{d-2}_{\tilde{S}(K_{t_0,x_0}^N)}+ C_\alpha \eta N^{-\alpha}(\tilde{e}_{t_0,x_0}^N+\tilde{f}_{t_0,x_0}^N)\\
	 	&\quad + C_B\eta (\tilde{e}^N_{t_0,x_0}+\|F\|^\frac{2(d+2)}{d-2}_{\tilde{S}(K_{t_0,x_0}^N)}).
	 \end{align*}	 
	 By (\ref{t0x0nf}), we also have
	 \begin{align*}
	 	\tilde{f}_{t_0,x_0}^N&\leq CN^{10\delta}\tilde{e}_{t_0,x_0}^N+CN^{10\delta}\|F\|^\frac{2(d+2)}{d-2}_{\tilde{S}(K_{t_0,x_0}^N)}+ C_\alpha \eta N^{-\alpha+10\delta}(\tilde{e}_{t_0,x_0}^N+\tilde{f}_{t_0,x_0}^N)\\
	 	&\quad + C_B\eta N^{10\delta} (\tilde{e}^N_{t_0,x_0}+\|F\|^\frac{2(d+2)}{d-2}_{\tilde{S}(K_{t_0,x_0}^N)}).
	 \end{align*}
	 Due to $-\alpha+10\delta <0$, choose $\eta$ such that
	$
	 	C_\alpha\eta \leq \frac{1}{8}, C_B\eta\leq \frac{1}{8}, 4CC_\alpha\eta\leq \frac{1}{8}$, we obtain
	 \begin{equation}\label{backtothis}
	 	\tilde{f}_{t_0,x_0}^N\leq 4CN^{10\delta}\tilde{e}_{t_0,x_0}^N+4CN^{10\delta}\|F\|^\frac{2(d+2)}{d-2}_{\tilde{S}(K_{t_0,x_0}^N)}.
	 \end{equation}
	 Then,
	 \begin{align*}
		\tilde{e}_{t_0,x_0}^N&\leq \frac{8}{7}C_0 e_{t_0,x_0}^N+2C\|F\|^\frac{2(d+2)}{d-2}_{\tilde{S}(K_{t_0,x_0}^N)}+\frac{1}{4}\tilde{e}_{t_0,x_0}^N+ C_\alpha \eta N^{-\alpha}\tilde{f}_{t_0,x_0}^N\\
		&\leq \frac{8}{7}C_0 e_{t_0,x_0}^N+2C\|F\|^\frac{2(d+2)}{d-2}_{\tilde{S}(K_{t_0,x_0}^N)}+\frac{3}{8}\tilde{e}_{t_0,x_0}^N+ \frac{1}{8} N^{-\alpha+10\delta}\|F\|^\frac{2(d+2)}{d-2}_{\tilde{S}(K_{t_0,x_0}^N)}.
	\end{align*}
	 Thus, we have
	 \begin{equation*}
	 	\tilde{e}_{t_0,x_0}^N \leq 2C_0 e_{t_0,x_0}^N+4C\|F\|^\frac{2(d+2)}{d-2}_{\tilde{S}(K_{t_0,x_0}^N)}.
	 \end{equation*}
	 By (\ref{backtothis}), we have
	 \begin{equation*}
	 	\tilde{f}_{t_0,x_0}^N\leq 8CC_0N^{10\delta}{e}_{t_0,x_0}^N+(16C^2+4C)N^{10\delta}\|F\|^\frac{2(d+2)}{d-2}_{\tilde{S}(K_{t_0,x_0}^N)}.
	 \end{equation*}
	 Taking $B = \max\{16C^2+4C, 8CC_0\}$, we obtain (\ref{ener}) and (\ref{forc}).
	 
	 Now, fixed $t_0 = 0, x_0 = 0$, for any $t\in [0,\infty)$, we have
	 \begin{align*}
	 	E(v(t),v_t(t))\leq \liminf_{N\rightarrow\infty} \tilde{e}_{0,0}^N\lesssim E(v(0),v_t(0))+ \|F\|_{S(\mathbb{R})}^\frac{2(d+2)}{d-2}.
	 \end{align*}
	 Thus, we obtain the uniform bound for $v$ in the energy space. By Proposition \ref{boundenergyimplyscatter}, we conclude that $v$ scatters.
	\end{proof}

	\begin{proof}[\textbf{Proof of Lemma \ref{estimateet0x0N}}]
		Fixed $t_0,x_0, N$, we use $w$ to represent $w_{t_0,x_0}^N$.
		\begin{align*}
			E(w(t), w_t(t)) = E(w(t_0),w_t(t_0))+\int_{t_0}^t\int_{\mathbb{R}^d} w_t(w^\frac{d+2}{d-2}-(w+F\chi_{K_{t_0,x_0}^N})^\frac{d+2}{d-2}).
		\end{align*}
		Thus, for any $t\in[t_0,t_0+N)$, we have
		\begin{align*}
		E(w(t),w_t(t))\leq E(w(t_0),w_t(t_0))+C\int_{K_{t_0,x_0}^N}|w_t||F|(|F|^\frac{4}{d-2}+|w|^\frac{4}{d-2}).
		\end{align*}
		By H\"{o}lder inequality and the definition of $\tilde{e}^N_{t_0,x_0}$, we obtain
		\begin{align*}
		\tilde{e}_{t_0,x_0}^N&\leq E(w(t_0),w_t(t_0))+C\int_{K_{t_0,x_0}^N}|w_t||F|^\frac{d+2}{d-2}+C\int_{K_{t_0,x_0}^N}|F||w_t||w|^{\frac{4}{d-2}}\\
		&\leq C_0 e_{t_0,x_0}^N + C\|w_t\|_{L^\infty L^2(K_{t_0,x_0}^N)}\|F\|_{\tilde{S}(K_{t_0,x_0}^N)}^\frac{d+2}{d-2}+C\int_{K_{t_0,x_0}^N}|F||w_t||w|^\frac{4}{d-2}\\
		&\leq C_0e_{t_0,x_0}^N + \frac{1}{8}\tilde{e}_{t_0,x_0}^N+C\|F\|_{\tilde{S}(K_{t_0,x_0}^N)}^\frac{2(d+2)}{d-2}+C\int_{K_{t_0,x_0}^N}|F||w_t||w|^\frac{4}{d-2} 
		\end{align*}
		The constant $C$ may be different line by line, but relies on $d$ only. Thus, we obtain (\ref{t0x0ne}).
		
		We still need to show (\ref{t0x0nf}), for any $t'\in [t_0,t_0+N], |x'-x_0|\leq 3N$, $0\leq s\leq N^{10
			\delta}$, $t\leq t'+s$,
		\begin{align*}
		&\frac{d}{dt}\int_{|x-x'|\leq s+t'-t}\frac{1}{2}w_t(t)^2+\frac{1}{2}w(t)^2+\frac{1}{2}|\nabla w(t)|^2 + \frac{d-2}{2d}w(t)^\frac{2d}{d-2}dx\\
		&\quad = -\int_{|x-x'|= s+t'-t}\frac{1}{2}w_t(t)^2+\frac{1}{2}w(t)^2+\frac{1}{2}|\nabla w(t)|^2 + \frac{d-2}{2d}w(t)^\frac{2d}{d-2}d\sigma(x)\\
		&\quad\quad+ \int_{|x-x'|=s+t'-t}w_t(t)\frac{\partial w}{\partial n}d\sigma(x)\\
		&\quad\quad+\int_{|x-x'|\leq s+t'-t}w_t(t)((w(t)+F(t)\chi_{K_{t_0,x_0}^N})^\frac{d+2}{d-2}-w(t)^\frac{d+2}{d-2})dx.\\
		&\quad\leq  -\frac{d-2}{2d}\int_{|x-x'|= s+t'-t}w(t)^\frac{2d}{d-2}d\sigma(x) \\
		&\quad\quad + \int_{|x-x'|\leq s+t'-t}w_t(t)((w(t)+F(t)\chi_{K_{t_0,x_0}^N})^\frac{d+2}{d-2}-w(t)^\frac{d+2}{d-2})dx.
		\end{align*}
		Thus, we have
		\begin{align*}
		&\quad \int_{t_0}^{t'+s}\int_{|x-x'|= s+t'-t}w(t)^\frac{2d}{d-2}d\sigma(x)dt\\
		& \leq \frac{d}{d-2}\int_{|x-x'|\leq s+t'-t_0}w_t(t_0)^2+w(t_0)^2+|\nabla w(t_0)|^2 + \frac{d-2}{d}w(t_0)^\frac{2d}{d-2}dx\\
		&~ + \frac{2d}{d-2}\int_{t_0}^{t_2}\int_{|x-x'|\leq s+t'-t}w_t(t)\left|(w(t)+F\chi_{K_{t_0,x_0}^N}(t))^\frac{d+2}{d-2}-w(t)^\frac{d+2}{d-2}\right|dxdt\\
		&\lesssim  \tilde{e}_{t_0,x_0}^N+ \int_{{K}_{t_0,x_0}^N} |w_t||F|(|w|^\frac{4}{d-2}+|F|^\frac{4}{d-2})\\
		&\lesssim  \tilde{e}^N_{t_0,x_0} + \|w_t\|_{L^\infty L^2({K}_{t_0,x_0}^N)}\|F\|^\frac{d+2}{d-2}_{\tilde{S}({K}_{t_0,x_0}^N)}+\int_{{K}^N_{t_0,x_0}}|w_t||F|^\frac{d+2}{d-2}\\
		&\lesssim \tilde{e}_{t_0,x_0}^N + \|F\|^\frac{2(d+2)}{d-2}_{\tilde{S}({K}_{t_0,x_0}^N)}+\int_{{K}^N_{t_0,x_0}}|F||w_t||w|^\frac{4}{d-2}.
		\end{align*}
		Similarly, for any $t'\in [t_0,t_0+N]$, $|x'-x_0|\leq 3N$, $0\leq s\leq N^{10\delta}$, $t'-s\geq t_0$, 
		we have
		\begin{align*}
		&\quad \int_{t'-s}^{t_0+N}\int_{|x-x'|= s+t-t'}w(t)^\frac{2d}{d-2}d\sigma(x)dt\\
		&\lesssim \tilde{e}_{t_0,x_0}^N + \|F\|_{\tilde{S}({K}_{t_0,x_0}^N)}^\frac{2(d+2)}{d-2} + \int_{K_{t_0,x_0}^N} |F||w_t||w|^\frac{4}{d-2}.
		\end{align*}
		Similar estimates are also valid for $s<0$.
		For some $t'\in [t_0,t_0+N], |x'-x_0|\leq 3N$, we have
		\begin{align*}
		\tilde{f}_{t_0,x_0}^N &= \int_{t_0}^{t_0+N}\int_{||x-x'|-|t-t'||\leq N^{10\delta}}w(t,x)^\frac{2d}{d-2}dxdt\\
		&\lesssim \int_{0}^{N^{10\delta}}\chi_{t'-s\leq t_0+N}ds\int_{t'-s}^{t_0+N}\int_{|x-x'| = s+t-t'}w(t,x)^\frac{2d}{d-2}d\sigma(x)dt\\
		&\quad+ \int_{0}^{N^{10\delta}}ds\int_{t_0}^{t'+s}\int_{|x-x'| = s+t'-t}w(t,x)^\frac{2d}{d-2}d\sigma(x)dt\\
		&\quad+ \int_{0}^{N^{10\delta}}\chi_{s+t'\leq t_0+N}ds\int_{s+t'}^{t_0+N}\int_{|x-x'| = -s+t-t'}w(t,x)^\frac{2d}{d-2}d\sigma(x)dt\\
		&\quad+ \int_{0}^{N^{10\delta}}\chi_{t_0\leq t'-s}ds\int_{t_0}^{t'-s}\int_{|x-x'| = -s+t'-t}w(t,x)^\frac{2d}{d-2}d\sigma(x)dt\\
		&\leq CN^{10\delta}\left(\tilde{e}_{t_0,x_0}^N+\|F\|^\frac{2(d+2)}{d-2}_{\tilde{S}(K_{t_0,x_0}^N)} + \int_{ K_{t_0,x_0}^N}|F||w_t||w|^\frac{4}{d-2}\right).
		\end{align*}
		Thus, we obtain (\ref{t0x0nf}).
	\end{proof}
	 
	 \section{Probabilistic local, gloabl existence and scattering}
	 Denote $K_N:= \{k\in \mathbb{Z}^d:|k|_{\infty}\in (N,2N)\}, N\geq 2$, $K_1:=\{k\in \mathbb{Z}^d:|k|_{\infty}\leq 2\}$.
	 \subsection{Almost sure local and global existence}\label{alomostsurelocalglobal}
	 \begin{pro}\label{almostsureexistence}
	 	For $d = 4, 5$, $s>0$. For $a.e$ $\omega$, the solution of (\ref{aim}) is global.
	 \end{pro}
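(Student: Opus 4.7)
The plan is to apply the perturbative global existence result (Proposition \ref{global existence}) to the equation \eqref{equationsearch} with $N = 1$, so that $v(0) = v_t(0) = 0$ and the forcing is the full random linear evolution $F^\omega(t) := \pi_1 K(t)(f^\omega, g^\omega)$. The task then reduces to showing that, almost surely,
\[
F^\omega \in S_{\mathrm{loc}}(\mathbb{R}) \cap L^1_{t,\mathrm{loc}}(\mathbb{R}, L^{2d/(d-4)}_x).
\]
By a countable union over the intervals $I_T := [-T,T]$, $T \in \mathbb{N}$, it suffices to prove that for every $T$ and some $p \geq 2$,
\[
\mathbb{E}_\omega \bigl[\|F^\omega\|_{S(I_T)}^p\bigr] < \infty \qquad\text{and}\qquad \mathbb{E}_\omega \bigl[\|F^\omega\|_{L^1_t(I_T) L^{2d/(d-4)}_x}^p\bigr] < \infty.
\]

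The core ingredient is a dyadic probabilistic Strichartz estimate built from iterated Khinchin applied to the independent families $\{X_k\}$ and $\{Y_l\}$ together with the refined linear estimates of Lemma \ref{Refinstr}. Decomposing $F^\omega = \sum_{N \in 2^{\mathbb{N}_0}} P_N F^\omega$ and writing
\[
P_N F^\omega = \sum_{k \in K_N}\sum_{l \in \mathbb{Z}^d} X_k(\omega_1)\, Y_l(\omega_2)\,\pi_1 K(t)\bigl(P_k(\varphi_l f),\, P_k(\varphi_l g)\bigr),
\]
Khinchin first in $\omega_1$ (conditioning on $\omega_2$) and then in $\omega_2$, followed by Minkowski passing $\ell^2_{k,l}$ inside $L^q_t L^r_x$, produces for $p \geq 2$ and $q, r \geq 2$:
\[
\|P_N F^\omega\|_{L^p_\omega L^q_t(I_T) L^r_x} \lesssim p \Bigl(\sum_{k \in K_N,\, l}\|\pi_1 K(t)(P_k(\varphi_l f), P_k(\varphi_l g))\|_{L^q_t(I_T) L^r_x}^2\Bigr)^{1/2}.
\]
Each wave packet is controlled by $\langle k\rangle^{1/q}$ times an $L^2$ norm via Lemma \ref{Refinstr} whenever $(q, r)$ lies in the refined Strichartz range, and Lemma \ref{equivalentnorm} collapses the $(k, l)$ sum, yielding
\[
\|P_N F^\omega\|_{L^p_\omega L^q_t(I_T) L^r_x} \lesssim p\, N^{1/q - s}\,\bigl(\|f\|_{H^s} + \|g\|_{H^{s-1}}\bigr).
\]
Given $s > 0$, choose $q > 1/s$ so that the dyadic sum $\sum_N N^{1/q - s}$ converges; then $F^\omega \in L^q_t(I_T) L^r_x$ almost surely.

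To finish, I convert this $L^q_t L^r_x$ bound into the two target norms by Hölder on the bounded interval $I_T$. For $S(I_T)$ I take $r = 2(d+2)/(d-2)$ and also require $q \geq (d+2)/(d-2)$, which combined with $q > 1/s$ is compatible with the refined Strichartz range. For $L^1_t L^{2d/(d-4)}_x$ I take $r = 2d/(d-4)$: this is $r = 10$ for $d = 5$, and $r = \infty$ for $d = 4$. The main technical point, and the only subtlety, is the $L^\infty_x$ case in dimension four; here $(q, \infty)$ remains inside the refined Strichartz range for any $q \geq 2$ because the forbidden endpoint in Lemma \ref{Refinstr} is $(2, \infty, 3)$ rather than $(2, \infty, 4)$, and it is crucial that the $\ell^2$-orthogonality in $(k, l)$ produced by iterated Khinchin supersedes the naive $\ell^1$ triangle inequality over $N$, which alone would lose too many derivatives to close for general $s > 0$. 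Once both norms of $F^\omega$ are almost surely finite on every $I_T$, Proposition \ref{global existence} gives global existence of $v$, and hence of $u = v + F^\omega$.
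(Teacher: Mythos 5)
Your overall strategy is the same as the paper's: write $u = v + F^\omega$ with $F^\omega = \pi_1 K(t)(f^\omega,g^\omega)$, reduce to Propositions \ref{localexistence} and \ref{global existence}, and verify the needed space-time integrability of $F^\omega$ by iterated Khinchin in $(\omega_1,\omega_2)$, a Minkowski interchange, the refined Strichartz bound of Lemma \ref{Refinstr}, and the equivalent-norm Lemma \ref{equivalentnorm} to collapse $\ell^2_{k,l}$ back to the $H^s$ norm. The only structural difference from the paper is that you estimate $\|P_N F^\omega\|_{L^p_\omega L^q_t(I_T) L^r_x}$ directly for finite $q$ and then H\"older on the bounded interval, whereas the paper first establishes $\|F^\omega\|_{L^1_\omega L^\infty_t L^r_x}<\infty$ globally (via Lemma \ref{inftytofine}) and then passes to local $L^q_t$ norms; both routes are fine and differ only by where the $N^{1/q}$ factor is incurred.

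There is, however, a genuine gap at $d=4$, $r=\frac{2d}{d-4}=\infty$. To run the Khinchin argument you must interchange $L^p_\omega$ with $L^q_t L^r_x$ (so that Khinchin can be applied pointwise in $(t,x)$ and then $\ell^2_{k,l}$ can be pulled out). That interchange uses Minkowski in the form $\|\cdot\|_{L^p_\omega L^q_t L^r_x}\leq\|\cdot\|_{L^q_t L^r_x L^p_\omega}$, which requires $p\geq\max(q,r)$; for $r=\infty$ this is impossible for any finite $p$. Your claim that ``$p\geq 2$'' suffices is therefore already off for finite $r$, and the problem becomes fatal at $r=\infty$. The observation that $(q,\infty,4)$ lies in the admissible range of Lemma \ref{Refinstr} is correct but addresses the wrong obstruction: the refined Strichartz estimate is not the bottleneck, the probabilistic interchange is. The paper circumvents this by inserting an intermediate exponent $\tilde r<\infty$ with $\frac{2}{q}+\frac{d-1}{\tilde r}\leq\frac{d-1}{2}$ and using Bernstein at the dyadic block level, $\|P_N F^\omega\|_{L^q_t L^\infty_x}\lesssim N^{d/\tilde r}\|P_N F^\omega\|_{L^q_t L^{\tilde r}_x}$, after which Minkowski works with any $p\geq\max(q,\tilde r)$. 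The extra loss $N^{d/\tilde r}$ can be made arbitrarily small by taking $\tilde r$ large, so the total derivative cost $1/q+d/\tilde r$ is still $<s$ for suitable choices. Inserting this Bernstein step (and correcting the constraint on $p$) closes the gap and brings your argument in line with the paper's proof. As a minor aside, your remark that the ``naive $\ell^1$ over $N$ would lose too many derivatives'' is not accurate: once the per-block bound $N^{1/q-s}$ (or $N^{1/q+d/\tilde r-s}$ at $r=\infty$) is in hand, the $\ell^1_N$ sum converges outright whenever the exponent is negative; the $\ell^2_{k,l}$ gain from Khinchin is what makes the per-block bound scale like $N^{-s}$, and it is not in competition with the $\ell^1_N$ summation.
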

	 \begin{proof}[\textbf{Proof}]	 
	 Denote $
	 	f_N^\omega = \sum_{k\in K_N,l\in \mathbb{Z}^d} X_{k}(\omega) P_k(Y_l(\omega)\varphi_l f)
	 $,	 and similarly for $g_N^\omega$. Denote 
	 $F^\omega(t) = \pi_1 K(t)(f^\omega,g^\omega)  
	,
	 	F_N^\omega = \pi_1 K(t)(f_N^\omega,g_N^\omega).
	 $
	 , for $2<r\leq \infty$, choose $\tilde{r}, 2<\tilde{r}<r,  \frac{2}{q}+\frac{d-1}{\tilde{r}}\leq \frac{d-1}{2}, q<\infty, p\geq \max\{q,\tilde{r}\}$, for any $\varepsilon>0$, by Lemma \ref{inftytofine}, Berstein inequality, Minkowski inequality, Lemma \ref{Refinstr}, \ref{equivalentnorm},
	 \begin{align*}
	 	\|F^\omega\|_{L^1_\omega L^\infty_t L^r_x}&\leq \|F_N^\omega\|_{L^1_NL^1_\omega L^\infty_t L^r_x}\lesssim \left\|\|N^{\frac{1}{q}+d(\frac{1}{\tilde{r}}-\frac{1}{r})}F_N^\omega\|_{L_t^q L^{\tilde{r}}_x L^p_\omega}\right\|_{l^1_N}\\
	 	&\lesssim \left\|N^{\frac{1}{q}+{d(\frac{1}{\tilde{r}}-\frac{1}{r})}}\|\pi_1 K(t)(P_k(\varphi_l f), P_k(\varphi_l g))\|_{L_t^q L^r_x l^2_{k\in K_N, l}}\right\|_{l^1_N}\\
	 	&\lesssim \left\| N^{\frac{2}{q}+d(\frac{1}{\tilde{r}}-\frac{1}{r})}\|\|(P_k(\varphi_l f), P_k(\varphi_l g))\|_{\mathcal{H}^0}\|_{l^2_{k\in K_N,l}}\right\|_{l^1_N}\\
	 	&\lesssim \|\langle k \rangle^{\frac{2}{q}+d(\frac{1}{\tilde{r}}-\frac{1}{r})+\varepsilon}\|(P_k(\varphi_l f), P_k(\varphi_l)g)\|_{\mathcal{H}^0}\|_{l^2_{k,l}}\\
	 	&\lesssim\|(f,g)\|_{\mathcal{H}^{\frac{2}{q}+d(\frac{1}{\tilde{r}}-\frac{1}{r})+\varepsilon}}.
	 \end{align*}
	 For any $s>0$, we can choose $q$ large and $\tilde{r}$ closed to $r$ such that $s>\frac{2}{q}+d(\frac{1}{\tilde{r}}-\frac{1}{r})$. Then, we obtain $\|F^\omega\|_{L^\infty_t L^r_x}<\infty, ~a.e$. Choosing $r = \frac{2(d+2)}{d-2}$, $\frac{2d}{d-4}$, we have $F^\omega \in S_{loc}(\mathbb{R})\cap L^1_{loc}(\mathbb{R}, L^\frac{2d}{d-4})$. We obtain the almost sure local and global existence by Proposition \ref{localexistence}, Proposition \ref{global existence}.
   	 \end{proof}

	 \subsection{Almost sure scattering}\label{Almostsurescattering}
	 Recall the main therorm in this paper.
	 	\begin{thm}\label{finalaimrepe}
	 	For $d=4,\frac{11}{12}<s<1$ or $d = 5, \frac{15}{16}<s<1$,
	 	$(f,g)\in \mathcal{H}^s$  real.  Then, for $\omega \in \Omega, ~ a.e$, there exists a unique global solution $u$ of $(\ref{aim})$ $s.t$. 
	 	\begin{equation*}
	 		(u(t),u_t(t))- K(t)(f^\omega,g^\omega)\in C(\mathbb{R},\mathcal{H}^1), \quad u\in S(\mathbb{R}).
	 	\end{equation*}
	 	\begin{equation*}
	 		u(t) = \cos(t\langle \nabla \rangle)f^\omega+\frac{\sin t\langle \nabla \rangle}{\langle \nabla \rangle}g^\omega - \int_{0}^{t}\frac{\sin(t-s)\langle \nabla \rangle}{\langle \nabla \rangle}|u|^\frac{4}{d-2}u(s)ds
	 	\end{equation*}
	 	There exists $(u^\infty_0,u^\infty_1)\in \mathcal{H}^1$ such that
	 	\begin{equation*}
	 		\lim_{t\rightarrow \infty}\|(u(t),u_t(t))-K(t)(f^\omega,g^\omega)-K(t)(u_0^\infty,u_1^\infty)\|_{\mathcal{H}^1} = 0.
	 	\end{equation*}
	 	The similar statement also holds as $t\rightarrow -\infty$.
	 \end{thm}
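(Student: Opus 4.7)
The plan is to prove Theorem \ref{finalaimrepe} via the Da Prato--Debussche decomposition $u = v + F^\omega_{\geq N}$, where $F^\omega_{\geq N}(t) = \pi_1 K(t)(f^\omega_{\geq N}, g^\omega_{\geq N})$ and $N = N(\omega) < \infty$ almost surely is a threshold to be chosen. Then $v$ solves the perturbed equation (\ref{equationsearch}) with initial data $(f^\omega_{<N}, g^\omega_{<N})$, which is a finite-frequency truncation of $\mathcal{H}^s$ data and hence lies in $\mathcal{H}^1$. Given this setup, Propositions \ref{localexistence}, \ref{global existence}, \ref{boundenergyimplyscatter} and the induction-on-scales scattering criterion \ref{uniformenergybyinduction} reduce the entire theorem to verifying, a.s.\ for some $N(\omega)$, that $F = F^\omega_{\geq N}$ (written as $F = \sum_{M \geq N} F_M$ with $F_M = P_M F^\omega_{\geq N}$) satisfies conditions (i)--(iv) of Proposition \ref{uniformenergybyinduction} for suitable parameters $\delta,\theta,\alpha,\beta$ with $\alpha > \theta + 20\delta$.

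Conditions (i)--(iii) should follow from probabilistic Strichartz estimates for the free Klein--Gordon evolution, extending the argument already used in Proposition \ref{almostsureexistence}. The phase-space randomization (\ref{initialdatacondition}) combined with Khinchin's inequality converts $L^p_\omega$ norms into the $l^2_{k,l}$ square function on which the equivalent-norm Lemma \ref{equivalentnorm} and the refined Strichartz estimates of Lemma \ref{Refinstr} produce a gain over the deterministic critical scaling. For $s$ sufficiently close to $1$, this gain suffices to bound each of $\|F^\omega\|_{S(\mathbb{R})}$, $\sup_{|I|=1}\|F^\omega\|_{L^1_t(I,L^{2d/(d-4)}_x)}$, and the dyadic piece $\|F^\omega_M\|_{L^1_t([M^{1+\theta},\infty), L^\infty_x)}$ by $C_\omega M^{-\beta}$ with a definite negative power of $M$; Borel--Cantelli then supplies the a.s.\ threshold $N(\omega)$. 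The Klein--Gordon dispersion rate $|t|^{-d/2}$ provided by Lemma \ref{refineddecayestimates}, strictly better than the wave rate $|t|^{-(d-1)/2}$, enlarges the admissible Strichartz range and is the decisive improvement that yields condition (iii) for $s$ as low as $15/16$ in dimension $d=5$.

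The main obstacle is condition (iv), the localized force bound
\[
\int_{K_{t_0,x_0}^N} |F_M|\,|u_t|\,|u|^{4/(d-2)} \leq \eta M^{-\alpha}\bigl(\tilde{\mathcal{E}}_{t_0,x_0}^N[u]+\tilde{\mathcal{F}}_{t_0,x_0}^{N,\delta}[u]\bigr),
\]
whose a.s.\ validity is precisely what pins down the thresholds $s > 11/12$ ($d=4$) and $s > 15/16$ ($d=5$). I would decompose $F^\omega_M$ into wave packets adapted to tubes of length $\sim M$ and transverse scale $\sim M^{1/2}$, apply Cauchy--Schwarz in $(t,x)$ to peel off $u_t$ and $u^{4/(d-2)}$ against $\tilde{\mathcal{E}}$ and $\tilde{\mathcal{F}}$, and thereby reduce (iv) to a probabilistic bound on the number of wave packets essentially supported in $K_{t_0,x_0}^N$. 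The randomization of both space and frequency blocks decorrelates distinct wave packets so that Khinchin-type large-deviation estimates give an $M^{-\alpha}$ gain beyond the deterministic scaling; this is the estimate whose probabilistic finiteness the paper defers to the Appendix. The mass term of Klein--Gordon contributes an extra $\langle k \rangle^{-1/2}$ factor in the relevant unit-cube Strichartz bounds, again widening the admissible range compared to the wave case.

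Once (i)--(iv) hold a.s., Proposition \ref{uniformenergybyinduction} gives $v$ global with $v \in S(\mathbb{R})$, and Proposition \ref{boundenergyimplyscatter} produces $(v_0^\infty, v_1^\infty) \in \mathcal{H}^1$ with $\|(v(t),v_t(t)) - K(t)(v_0^\infty,v_1^\infty)\|_{\mathcal{H}^1} \to 0$. Since $u = v + F^\omega_{\geq N}$ and $K(t)(f^\omega,g^\omega) = K(t)(f^\omega_{\geq N},g^\omega_{\geq N}) + K(t)(f^\omega_{<N},g^\omega_{<N})$, where the low-frequency piece is a finite-energy free Klein--Gordon solution, setting $(u^\infty_0, u^\infty_1) := (v_0^\infty,v_1^\infty) - (f^\omega_{<N},g^\omega_{<N})$ yields the claimed scattering of $(u,u_t)$ to $K(t)(f^\omega,g^\omega) + K(t)(u^\infty_0,u^\infty_1)$ in $\mathcal{H}^1$. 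The negative-time direction follows by the time-reversal symmetry of (\ref{aim}).
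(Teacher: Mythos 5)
Your high-level strategy matches the paper exactly: Da Prato--Debussche decomposition $u = v + F^\omega_{\geq N}$, reduction to conditions (i)--(iv) of Proposition \ref{uniformenergybyinduction}, probabilistic Strichartz for (i)--(iii), and a wave-packet/bush argument for (iv), then Propositions \ref{uniformenergybyinduction} and \ref{boundenergyimplyscatter} to close. Your linearity computation for $(u_0^\infty,u_1^\infty)$ is also correct. However, two of the mechanisms you cite are not what actually drives the paper's argument, and if taken literally would not work.

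First, the wave packets you describe --- ``tubes of length $\sim M$ and transverse scale $\sim M^{1/2}$'' --- are the classical parabolically rescaled packets of restriction theory, and they are incompatible with the microlocal randomization (\ref{initialdatacondition}), which randomizes \emph{unit} cubes of phase space. The packets in the paper are $W_{k,\tilde{l},t_0}^{\omega_2} = \tilde{P}_k(\varphi_{\tilde{l}} f^{\omega_2}_{k,t_0})$, supported in unit cubes in both $x$ and $\xi$; the associated light rays $T_{k,\tilde{l}}$ are thickened only to width $N^{2\delta}$, not $M^{1/2}$. It is precisely this unit-scale block structure that makes the random signs $X_k(\omega_1)$, $Y_l(\omega_2)$ decorrelate distinct packets and lets the Khinchin/bush estimates produce the gain; $M^{1/2}$-scale packets would not align with the random blocks. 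Second, the mass term of Klein--Gordon does not enter through an extra $\langle k\rangle^{-1/2}$ in the Strichartz estimates (the refined Strichartz bound is $\langle k\rangle^{1/q}$, the same as for the wave case). Its role, as stated in the abstract and visible in Proposition \ref{difficultest} and Lemma \ref{estimateet0x0N}, is on the \emph{energy} side: for $d=5$ one H\"older-splits $|u|^{4/(d-2)} = |u|^{(d-4)/2}\,|u|^{d(6-d)/(2(d-2))}$, paying the first factor against the $L^2_x$ piece of the local energy, which is available only because the conserved energy contains $\frac{1}{2}\int u^2$. The refined dispersive rate $|t|^{-d/2}$ is useful elsewhere, but in the paper's verification of condition (iii) the bound that is actually applied is the wave-type rate $(1+|t|/M)^{-(d-1)(1/2-1/r)}$; the $M^{-\beta}$ gain there comes from the time cutoff $|t|\geq M^{1+\theta}$, not the extra Klein--Gordon decay. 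These are not fatal to your outline, but an execution following your stated mechanisms would go off the rails at both points.
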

 	 To prove Theorem \ref{finalaimrepe}, by Proposition \ref{uniformenergybyinduction}, we only need the following proposition.
	 \begin{pro}\label{wavepacketargumet}
	 	For $d = 4, \frac{11}{12}<s<1$ or $d = 5, \frac{15}{16}<s<1, \quad \omega$, $a.e$, there exists $N$, such that $F_{\geq N}^\omega$ satisfies the conditions of $F$ in Proposition \ref{uniformenergybyinduction}.
	 \end{pro}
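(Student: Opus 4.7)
The plan is to verify each of the four hypotheses $(i)$--$(iv)$ of Proposition \ref{uniformenergybyinduction} for $F := F_{\geq N}^\omega$ as a probabilistic estimate. The strategy is to show that the relevant space-time norms of $F_{\geq N}^\omega$ (and of its dyadic pieces $F_M^\omega$, $M \geq N$) are almost surely finite and tend to zero as $N \to \infty$, and then pick $N = N(\omega)$ so large that all four bounds fall below the threshold $\eta$. Because $\eta$ is fixed once $(\alpha, \beta, \theta, \delta)$ are chosen and does not depend on $F$, this reduction is admissible.

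For conditions $(i)$, $(ii)$, $(iii)$, I would argue by probabilistic Strichartz estimates. Given the phase-space randomization (\ref{initialdatacondition}), Lemma \ref{equivalentnorm} together with the Khinchin inequality reduces the $L^p_\omega$ norm of a space-time norm of $F_M^\omega$ to an $\ell^2_{k,l}$ square function of $\pi_1 K(t)(P_k(\varphi_l f), P_k(\varphi_l g))$. Applying the refined Strichartz estimates of Lemma \ref{Refinstr} (and Lemma \ref{inftytofine} to convert $L^\infty_t$ to $L^q_t$ for finite $q$) on each unit-frequency piece yields a bound of the form $C_\varepsilon M^{-\sigma} \|(f,g)\|_{\mathcal{H}^s}$ for some $\sigma>0$, coming from the gap between the deterministic scaling index and the randomized index at regularity $s < 1$. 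Dyadic summation and Borel--Cantelli then deliver a large enough $N(\omega)$ almost surely. For $(iii)$ specifically, I would exploit the third term in the minimum of Lemma \ref{refineddecayestimates}, namely $\langle k \rangle^{(d+2)/2}|t|^{-d/2}$, which integrates on $[N^{1+\theta},\infty)$ to a polynomial gain in $N$; this is the quantitative place where the mass term of Klein--Gordon supplies an edge over the pure wave treatment in \cite{Bringmann1}, and is what closes the argument in $d=5$.

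The main obstacle is condition $(iv)$, the localized bilinear bound
\[
\int_{K_{t_0,x_0}^N} |F_M^\omega|\,|u_t|\,|u|^{\frac{4}{d-2}} \leq \eta M^{-\alpha}\bigl(\tilde{\mathcal{E}}_{t_0,x_0}^N[u]+\tilde{\mathcal{F}}_{t_0,x_0}^{N,\delta}[u]\bigr),
\]
which must hold uniformly in all deterministic $u \in C(\mathbb{R},H^1) \cap C^1(\mathbb{R},L^2)$ and across the entire induction-on-scales grid $(t_0, x_0, N)$. Following the bushes/wave-packet scheme of \cite{Bringmann1}, I would decompose $F_M^\omega$ into spacetime tubes adapted to the Klein--Gordon propagator, bound the contribution of a single tube by H\"older together with Sobolev embedding of $u$ and the local-energy/local-integrated quantities on the right-hand side, and then sum over tubes intersecting $K_{t_0,x_0}^N$. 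The random signs $X_k(\omega_1) Y_l(\omega_2)$ produce square-function cancellation, via Khinchin, between contributions from different phase-space cells, so that only the cells actually threading the truncated light cone contribute. The precise balance between this tube count, the Strichartz scaling for $e^{\pm it\langle\nabla\rangle}P_k$, and the regularity carried by $(f,g)$ is what produces the thresholds $s > \frac{11}{12}$ for $d = 4$ and $s > \frac{15}{16}$ for $d = 5$. This is the detailed, technical step, and the proof of the requisite wave-packet estimates is carried out in the appendix; once established, they combine with the induction on scales of Proposition \ref{uniformenergybyinduction} and Proposition \ref{boundenergyimplyscatter} to yield the almost sure scattering statement of Theorem \ref{finalaimrepe}.
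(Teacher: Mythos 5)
The overall architecture of your proposal matches the paper: verify (i), (ii), (iii) by probabilistic Strichartz arguments built from Lemma \ref{equivalentnorm}, Khinchin, and the refined estimates; verify (iv) by the bush/wave-packet decomposition and Khinchin-type cancellation; then take $N(\omega)$ large enough and invoke Borel--Cantelli/Chebyshev. That is indeed what the paper does, and the exponent bookkeeping in (iv) is indeed what produces $s>\frac{11}{12}$ for $d=4$ and $s>\frac{15}{16}$ for $d=5$.

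However, your attribution of where the Klein--Gordon mass term ``closes the argument in $d=5$'' is not what the paper does, and I think it is conceptually misplaced. You assert that in condition (iii) one should exploit the third member of the minimum in Lemma \ref{refineddecayestimates}, i.e.\ $\langle k\rangle^{(d+2)/2}|t|^{-d/2}$. The paper's computation for (iii) actually uses the \emph{wave-like} decay $\bigl(1+\tfrac{|t|}{M}\bigr)^{-(d-1)(\frac{1}{2}-\frac{1}{r})}$, coming from the second member $\langle k\rangle^{(d-1)/2}|t|^{-(d-1)/2}$ after interpolation; this yields the bound $\|(f,g)\|_{\mathcal{H}^{1-\frac{d-3}{2}\theta+\beta+\frac{d+d\theta-\theta}{r}}}$ and the constraint $1-\frac{d-3}{2}\theta+\beta<s$. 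In fact on $[M^{1+\theta},\infty)$ with $\theta$ small, the crossover between the two decay regimes happens at $|t|\sim M^{3}\gg M^{1+\theta}$, so the bulk of the $L^1_t$-integral is governed by the wave-like term, and using only the Klein--Gordon term would be strictly worse. The mass term's real contribution in this paper is elsewhere: (a) the energy functional $E$ includes $\frac{1}{2}\int u^2$, which makes $\|w\|_{L^\infty_t L^2_x}$ a controlled quantity in the induction on scales --- crucial when $d=5$ because one splits $|w|^{4/(d-2)}=|w|^{(d-4)/2}|w|^{d(6-d)/(2(d-2))}$ and the first factor demands the $L^2$ bound that the wave energy does not supply; and (b) condition (ii) is posed in $L^1_t L^{2d/(d-4)}_x$, which for $d=5$ is $L^1_tL^{10}_x$ rather than $L^1_tL^\infty_x$, giving a Gronwall-able energy increment in Proposition \ref{global existence}. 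So you should correct the claim that the $|t|^{-d/2}$ decay is the decisive input for $d=5$; the rest of your outline is sound.
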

 	 First, we use Proposition \ref{uniformenergybyinduction}, \ref{wavepacketargumet}  to obtain Theorem \ref{finalaimrepe}.
 	 \begin{proof}[\textbf{Proof of Theorem \ref{finalaimrepe}}]
 	 	Given $d=4, \frac{11}{12}<s<1$ or $d = 5, \frac{15}{16}<s<1$, for $\omega$, $a.e$, we choose $N$ such that $F_{\geq N}^\omega$ satisfies the condition of $F$ in Proposition \ref{uniformenergybyinduction} by Proposition \ref{wavepacketargumet}. We also have $(f_{<N}^\omega, g_{<N}^\omega)\in \mathcal{H}^1$. Thus, we obtain the solution $v$ of the equation
	 	\begin{equation*}
		   \left\{
	       \begin{array}{l}
		     v_{tt} - \Delta v + v + (v+F_{\geq N}^\omega)^\frac{d+2}{d-2} = 0, \quad (t,x)\in \mathbb{R}\times \mathbb{R}^d;\\
			 v|_{t=0} = f^\omega_{<N} , ~ v_t|_{t = 0} = g^\omega_{<N} .
	       \end{array}
		   \right.
		 \end{equation*}
	 	 By Proposition \ref{uniformenergybyinduction}, we know that $v$ is global and scatters. Since $u(t) = v(t) + F^\omega_{\geq N}(t)$ is the solution of (\ref{aim}). $\|u\|_{S(\mathbb{R})}\leq \|v\|_{S(\mathbb{R})}+\|F_{\geq N}^\omega\|_{S(\mathbb{R})}<\infty$. The scattering property of $u$ follows from the scattering property of $v$.
 	 \end{proof}
  
	 \begin{proof}[\textbf{Proof of Proposition \ref{wavepacketargumet}}]
	 We need to show that $F :=  F^\omega_{\geq N}$ satisfies (i), (ii), (iii), (iv) in Proposition \ref{uniformenergybyinduction}.  For any $s>\frac{d-2}{d+2}$,	
	 \begin{align*}
	 \|\|F^\omega_{\geq N}\|_{S(\mathbb{R})}\|_{L^1_\omega}&\lesssim \|M^\frac{d-2}{d+2}\|\|(P_k(\varphi_l f),P_k(\varphi_1 g))\|_{\mathcal{H}^0}\|_{l^2_{k\in K_M, l}}\|_{l^1_{M\geq N}}\\
	 & \sim\| M ^{-s+\frac{d-2}{d+2}}\|\|(P_k(\varphi_l f),P_k(\varphi_l g))\|_{\mathcal{H}^{s}}\|_{l^2_{K_M,l}}\|_{l^1_{M\geq N}}\\
	 &\lesssim N^{-s+\frac{d-2}{d+2}}\|(f,g)\|_{\mathcal{H}^s}.
	 \end{align*} 
 	 Then,
 	 \begin{align*}
 	 	|\{\omega:\|F_{\geq N}^\omega\|_{S(\mathbb{R})}>\eta\}|\lesssim \eta^{-1}N^{-s+\frac{d-2}{d+2}}\|(f,g)\|_{\mathcal{H}^s}.
 	 \end{align*}
	 Thus, we obtain (i) for $F= F_{\geq N}^\omega$, $ N(\omega)$ sufficient large, $\omega$ $a.e$.
	 
	 For condition (ii), similar to  the former argument, for any $s>\varepsilon>0$, choose $q<\infty, r<\frac{2d}{d-4}$ such that $\frac{2}{q}+\frac{d-1}{r}\leq \frac{d-1}{2}$, $\frac{2}{q}+d(\frac{1}{r}-\frac{d-4}{2d})<\varepsilon$, by the proof of Proposition \ref{almostsureexistence},
	 \begin{align*}
	 	\|F_{\geq N}^\omega\|_{ L^1_\omega L^\infty_t L^\frac{2d}{d-4}_x} & \lesssim \| M ^{\frac{2}{q}+d(\frac{1}{r}-\frac{d-4}{2d})}\|\|(P_k(\varphi_l f),P_k(\varphi_l g))\|_{\mathcal{H}^{0}}\|_{l^2_{K_M,l}}\|_{l^1_{M\geq N}}\\
	 	&\lesssim \|M ^{-s+\varepsilon}\|\|(P_k(\varphi_l f),P_k(\varphi_l g))\|_{\mathcal{H}^{s}}\|_{l^2_{K_M,l}}\|_{l^1_{M\geq N}}\\
	 	&\lesssim
	 	N^{-s+\varepsilon}\|(f,g)\|_{\mathcal{H}^s}
	 \end{align*}
	 Thus, for $s>0$, we have (ii) for $F= F_{\geq N}^\omega$, $N(\omega)$ sufficient large, $\omega$ $a.e$.
	   
	  For condition (iii),	for $2<r<\infty$, by Bernstein inequality, Khinchin inequality, the decay estimate \ref{refineddecayestimates}, and Lemma \ref{equivalentnorm}, we have
	 \begin{align*}
	 	&\quad \left\| \|M^\beta F^\omega_M\|_{L^1([M^{1+\theta},\infty),L^\infty_x)} \right\|_{L^1_\omega l^2_M}\\
	 	&\lesssim \left\| \|M^{\beta+\frac{d}{r}} F^\omega_M\|_{L^1([M^{1+\theta},\infty),L^r_x)} \right\|_{L^1_\omega l^2_M}\\
	 	&\lesssim  \left\|M^{\beta + \frac{d}{r}}\pi_1 K(t)(P_k(\varphi_l f),P_k(\varphi_l g))\right\|_{l^2_ML^1_t([M^{1+\theta},\infty))l^2_{k\in K_M,l}L^r_x}\\
	 	&\lesssim \left\|M^{\beta +\frac{d}{r}}\left(1+\frac{|t|}{M}\right)^{-(d-1)(\frac{1}{2}-\frac{1}{r})}P_k(\varphi_l f)\right\|_{l^2_ML^1_t([M^{1+\theta},\infty))l^2_{k\in K_M,l}L^{r'}_x}\\
	 	&\quad + \left\|M^{\beta +\frac{d}{r}}\left(1+\frac{|t|}{M}\right)^{-(d-1)(\frac{1}{2}-\frac{1}{r})}\langle\nabla\rangle^{-1}P_k(\varphi_l g)\right\|_{l^2_NL^1_t([M^{1+\theta},\infty))l^2_{k\in K_M,l}L^{r'}_x}\\
	 	&\lesssim \left\|M^{1-\frac{d-3}{2}\theta+\beta +\frac{d+d\theta-\theta}{r}}P_k(\varphi_l f)\right\|_{l^2_M l^2_{k\in K_M,l}L^{r'}}\\
	 	&\quad +\left\|M^{1-\frac{d-3}{2}\theta+\beta +\frac{d+d\theta-\theta}{r}}\langle\nabla\rangle^{-1}P_k(\varphi_l g)\right\|_{l^2_M l^2_{k\in K_M,l}L^{r'}}\\
	 	&\lesssim \|(f,g)\|_{\mathcal{H}^{1-\frac{d-3}{2}\theta+\beta + \frac{d+d\theta -\theta}{r}}}.
	 \end{align*}
	 Then for $1-\frac{d-3}{2}\theta+\beta < s$, we have for some $\varepsilon>0$,
	 \begin{equation*}
	 	\left\| \|M^{\beta+\varepsilon} F^\omega_M\|_{l^2_NL^1([N^{1+\theta},\infty),L^\infty_x)} \right\|_{L^1_\omega }\lesssim_\beta \|(f,g)\|_{\mathcal{H}^s}.
	 \end{equation*}
 	 Thus, if $\beta ,\theta ,s$ satisfy $1-\frac{d-3}{2}\theta+\beta < s$, we have that (iii) in Proposition \ref{uniformenergybyinduction} for $F^\omega_{\geq N}$, $N$ sufficient large, $\omega$ $a.e$. 
 	  
 	 For (iv), 
	 \begin{align*}
	 	F^\omega_N &= \sum_{k\in K_N}\sum_l X_{k}(\omega_1) \Bigg(\frac{1}{2}e^{it\langle \nabla \rangle}(P_k(Y_l(\omega_2)\varphi_l f)-i\langle\nabla\rangle^{-1}P_k(Y_l(\omega_2)\varphi_l g))\\
	 	&\qquad\qquad\qquad\qquad+\frac{1}{2}e^{-it\langle\nabla\rangle}(P_k(Y_l(\omega_2)\varphi_l f)+i\langle\nabla\rangle^{-1} P_k(Y_l(\omega_2)\varphi_l g))\Bigg)\\
	 	&:= F^{+,\omega}_N + F^{-,\omega}_N.
	 \end{align*}
	 Denote 
	 \begin{align}
	 	f_{k,l} = \frac{1}{2}(P_k(\varphi_lf)-i\langle\nabla\rangle^{-1}P_k(\varphi_lg)), &\quad f_{k,t_0}^{\omega_2} = \sum_l e^{it_0\langle\nabla\rangle}Y_l(\omega_2) f_{k,l},\label{definitionfkt0}\\
	 	f_k^{\omega_2} = \sum_l Y_{l}(\omega_2) f_{k,l},&\quad W_{k,\tilde{l},t_0}^{\omega_2} = \tilde{P}_k(\varphi_{\tilde{l}} {f}^{\omega_2}_{k,t_0}).\label{definiitionwavepacket}
	 \end{align}
	 Then, we have
	 \begin{align*}
	 	F^{+,\omega}_N &= \sum_{k\in K_N}X_k(\omega_1)e^{i(t-t_0)\langle \nabla\rangle} f_{k,t_0}^{\omega_2}\\	 	
	 	& = \sum_{k\in K_N} X_k(\omega_1) e^{i(t-t_0)\langle \nabla \rangle}\sum_{\tilde{l}} \tilde{P}_k(\varphi_{\tilde{l}} {f}^{\omega_2}_{k,t_0})\\
	 	&= \sum_{k\in K_N}\sum_{\tilde{l}}X_k(\omega_1)e^{i(t-t_0)\langle\nabla\rangle} W_{k,\tilde{l},t_0}^{\omega_2}.
	 \end{align*}
     To show (iv), we need the following main proposition.
 
\begin{pro}\label{difficultest}
	There exists $C(\omega,\delta)<\infty$, $a.e$, such that
	\begin{align*}
		&\quad\int_{ K_{t_0,x_0}^N} |F^{\pm,\omega}_N||u_1||u_2|^\frac{4}{d-2}\\
		&\leq C(\omega,\delta) N^{-s+\frac{d+2}{8}+2d\delta} \|u_1\|_{L^\infty_tL^2_x(K^N_{t_0,x_0})}\|u_2\|_{L^\infty_tL_x^2(K_{t_0,x_0}^N)}^\frac{d-4}{2}\\
		&\quad\quad\qquad\qquad\times\left(\|u_2\|^\frac{d(6-d)}{2(d-2)}_{L^\infty_tL^\frac{2d}{d-2}_x(K_{t_0,x_0}^N)}+(\mathcal{F}_{t_0,x_0}^N[u_2])^\frac{6-d}{4}\right).
	\end{align*}
	where
	\begin{equation*}
		{\mathcal{F}}_{t_0,x_0}^N[u] = \sup_{\substack{t':t_0\leq t'\leq t_0+N\\x':|x'-x_0|\leq 3N}}\int_{t_0}^{t_0+N}\int_{\substack{\left||x-x'|-|t-t'|\right|\leq N^{5\delta}}}|u(t,x)|^{\frac{2d}{d-2}}dxdt.
	\end{equation*}
	for $u\in L_{t,x,loc}^\frac{2d}{d-2}$.
\end{pro}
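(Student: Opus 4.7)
The plan is to reduce Proposition \ref{difficultest} to a key wave-packet bound for $F^{\pm,\omega}_N$, whose almost sure finiteness is the probabilistic heart of the argument and (per the introduction) will be deferred to the Appendix, then combine this bound with H\"older's inequality and the geometric structure of the truncated cone $K^N_{t_0,x_0}$.

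First I would exploit the wave-packet decomposition (\ref{definitionfkt0})--(\ref{definiitionwavepacket}): each summand $X_k(\omega_1)\, e^{\pm i(t-t_0)\langle\nabla\rangle} W^{\omega_2}_{k,\tilde{l},t_0}$ is localized in frequency to the unit cube at $k$ and, up to rapid off-tube decay obtained from stationary-phase analysis of the phase $x\cdot\xi\pm t\langle\xi\rangle$ (in the same spirit as Lemma \ref{refineddecayestimates}), to a spacetime tube of width $O(N^\delta)$ traveling along the ray $(t,\tilde{l}+(t-t_0)k/\langle k\rangle)$. Since $|k|\sim N\ge 1$, the group velocity $k/\langle k\rangle$ has magnitude close to one, so each tube behaves qualitatively like a wave-equation null ray and interacts with the cone $K^N_{t_0,x_0}$ in the manner used by Bringmann \cite{Bringmann1}.

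Next I would perform a tube dichotomy, separating packets whose tubes meet $K^N_{t_0,x_0}$ transversely from those whose tubes are nearly tangent to the lateral null boundary. For transverse tubes, H\"older is applied with $|u_2|^{4/(d-2)}$ split as $|u_2|^{(d-4)/2}$ against $L^\infty_t L^2_x$ times $|u_2|^{d(6-d)/(2(d-2))}$ against $L^\infty_tL^{2d/(d-2)}_x$, producing the first alternative in the stated bound. For tangent tubes, the interaction is localized to the thin null neighborhood $\{\left||x-x'|-|t-t'|\right|\le N^{5\delta}\}$ appearing in $\mathcal{F}^N_{t_0,x_0}[u_2]$, so the second alternative $(\mathcal{F}_{t_0,x_0}^N[u_2])^{(6-d)/4}$ is used instead; a direct check gives $(d-4)/2+d(6-d)/(2(d-2)) = 4/(d-2)$, so the total power of $u_2$ matches. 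The factor $\|u_1\|_{L^\infty_tL^2_x(K^N_{t_0,x_0})}$ appears by pairing against $u_1$ in $L^1_tL^2_x$ over a time interval of length $\sim N$.

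The main estimate to establish is then a wave-packet bound of the form
\[
\left\| \sum_{k\in K_N}\sum_{\tilde l} X_k(\omega_1)\, e^{\pm i(t-t_0)\langle\nabla\rangle}W^{\omega_2}_{k,\tilde l,t_0}\right\|_{\text{tube-localized norm on }K^N_{t_0,x_0}}\le C(\omega,\delta)\,N^{-s+(d+2)/8+2d\delta},
\]
obtained from (i) Khinchin's inequality in $X_k$ giving square-root cancellation over each \emph{bush} of tubes crossing a common region; (ii) Khinchin in $Y_l$ producing probabilistic orthogonality of the $W^{\omega_2}_{k,\tilde l,t_0}$ at fixed $k$; (iii) the refined Strichartz estimates in Lemma \ref{Refinstr} to absorb the time integration; (iv) Lemma \ref{equivalentnorm} together with the $\mathcal{H}^s$ hypothesis extracting the $N^{-s}$. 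The hardest step, and the source of the restrictive ranges $\frac{11}{12}<s<1$ and $\frac{15}{16}<s<1$, is the sharp bookkeeping in (i): bush sizes must be counted carefully enough that the loss $N^{(d+2)/8}$ beats $N^{-s}$. Almost sure finiteness of $C(\omega,\delta)$ is deduced via Borel-Cantelli from the corresponding $L^1_\omega$ bound, which is the content of the Appendix and relies on sub-Gaussian concentration of $\{X_k\},\{Y_l\}$.
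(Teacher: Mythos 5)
Your proposal identifies the right ingredients --- wave packets in spacetime tubes of width $N^{O(\delta)}$ traveling at speed $|k|/\langle k\rangle\approx 1$, bushes, Khinchin cancellation, the two H\"older alternatives (and your exponent identity $\frac{d-4}{2}+\frac{d(6-d)}{2(d-2)}=\frac{4}{d-2}$ is correct), and the deferral of almost sure finiteness to the Appendix --- but your organizing dichotomy is not the one the argument actually uses, and the step that produces the exponent is missing. The paper does not sort packets into ``transverse'' versus ``tangent'' classes. After discarding far tubes via Lemma~\ref{outdecay} and tiny packets $2^m\le N^{-C_d}$ directly, it stratifies the nearby packets by $L^2$-size into level sets $\mathscr{A}_m^{\omega_2}$, and for each $m$, Lemma~\ref{decomposebushandremainder} splits $\mathscr{A}_m^{\omega_2}$ into bushes $\mathscr{B}_{j,m}^{\omega_2}$ (each with at least $\mu_m^{\omega_2}$ tubes through a common $N^\delta$-cube) and a nearly non-overlapping remainder $\mathscr{D}_m^{\omega_2}$. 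The $\mathcal{F}^N_{t_0,x_0}[u_2]$ alternative does not come from ``tangent packets'': for \emph{every} bush the spacetime domain is split into the tube set $\tilde T_{j,m,\delta}^{\omega_2}$ (contained, since the group speed is nearly $1$, in a thin null shell, so H\"older pairs $u_2$ there against $\mathcal{F}^N_{t_0,x_0}$) and its complement (where the packets decay rapidly, Lemma~\ref{fastdecay}); the remainder $\mathscr{D}_m^{\omega_2}$ is controlled by a direct $L^\infty_{t,x}$ bound and feeds only the first alternative.

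The more serious gap is that you write a single ``wave-packet bound'' on the full sum $\sum_{k,\tilde l}$ and attribute the factor $N^{-s+(d+2)/8}$ to ``sharp bookkeeping of bush sizes,'' which conceals the actual mechanism. No such single bound is proved: the probabilistic quantity $W(\omega)$ controls each bush sum and the remainder sum \emph{individually}, normalized by $2^m(\#\mathscr{B}_{j,m}^{\omega_2})^{1/2}$ and $2^m(\mu_m^{\omega_2})^{1/2}$ respectively. The exponent $N^{(d+2)/8}$ emerges only from the specific balancing choice $\mu_m^{\omega_2}=\lceil N^{(d-6)/4}\#\mathscr{A}_m^{\omega_2}\rceil$: the bush contribution carries a prefactor $N^{(d-2)/4}$ from H\"older in time over the tube and a combinatorial factor $\sum_j(\#\mathscr{B}_{j,m}^{\omega_2})^{1/2}\lesssim \#\mathscr{A}_m^{\omega_2}(\mu_m^{\omega_2})^{-1/2}$, while the remainder carries a prefactor $N$ and the factor $(\mu_m^{\omega_2})^{1/2}$, and with the choice above both become $N^{(d+2)/8}\,2^m(\#\mathscr{A}_m^{\omega_2})^{1/2}$. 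The final $N^{-s}$ then comes from $\sum_m 2^{2m}\#\mathscr{A}_m^{\omega_2}\lesssim N^{-2s}h(\omega_2)^2$ together with Cauchy--Schwarz over the $O(\log N+\log\langle h(\omega_2)\rangle)$ nonempty scales $m$. Without the $\mathscr{A}_m^{\omega_2}$ stratification and the tuned $\mu_m^{\omega_2}$ you cannot derive the stated exponent, and hence cannot reach the thresholds $s>\frac{11}{12}$ (for $d=4$) and $s>\frac{15}{16}$ (for $d=5$).
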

The proof of Proposition \ref{difficultest} is lengthy. We assume this proposition first, and continue to the proof of Proposition \ref{wavepacketargumet}. We still need to show (iv) in Proposition \ref{uniformenergybyinduction}.
For $M>N$, then there exists $(\tau_0,y_0)$ such that $K^M_{\tau_0,y_0}\supset K^N_{t_0,x_0}$. 

If $M\geq N^\frac{8}{d+1}$, then we have $N\leq M^\frac{d+1}{8}$. For $\varepsilon>0$, $\frac{d+2}{q}<\varepsilon$, 
\begin{align*}
	\|M^{s-\varepsilon}\|F^\omega_M\|_{L^\infty_{t,x}}\|_{L^1_\omega l^2_M}&\lesssim \|M^{s-\varepsilon+\frac{d+2}{q}}\|(P_k(\varphi_l f), P_k(\varphi_l)g)\|_{l^2_{k\in K_M,l}\mathcal{H}^0}\|_{l^2_M}\\
	&\lesssim \|(f,g)\|_{\mathcal{H}^{s-\varepsilon+\frac{d+2}{q}}}\lesssim \|(f,g)\|_{\mathcal{H}^s}.
\end{align*}
Thus, $\|F_M^\omega\|_{L^\infty_{t,x}}\lesssim C_\varepsilon(\omega)M^{\varepsilon-s}$ for some $C_{\varepsilon}(\omega)<\infty, ~a.e.$, $\omega$. Then, choose $\varepsilon = \frac{1}{8}$,
\begin{align*}
	&\quad \int_{ K_{t_0,x_0}^N}|F^\omega_M||u_1||u_2|^\frac{4}{d-2}\\&\leq N\|F^\omega_M\|_{L^\infty_{t,x}}\|u_1\|_{L^\infty_tL^2_x(K^N_{t_0,x_0})}\|u_2\|^\frac{4}{d-2}_{L^\infty_tL^\frac{8}{d-2}_x(K^N_{t_0,x_0})}\\
	&\lesssim C(\omega) M^{-s+\frac{d+2}{8}}\|u_1\|_{L^\infty_tL^2_x(K^N_{t_0,x_0})}\|u_2\|^\frac{4}{d-2}_{L^\infty_tL^\frac{8}{d-2}_x(K^N_{t_0,x_0})}.
\end{align*}

If $M< N^\frac{8}{d+1}$. Define $\tilde{u}_1:= u_1\chi_{K_{t_0,x_0}^N},~\tilde{u}_2 := u_2\chi_{K_{t_0,x_0}^N}$. Since $\frac{8}{d+1}<2$, we know that $M^{5\delta}\leq N^{10\delta}$. Then, by the definition of $\mathcal{F}_{\tau_0, y_0}^M, \tilde{u}_2$ and $\tilde{\mathcal{F}}_{t_0,x_0}^N$, we have
\begin{equation*}
	\mathcal{F}^M_{\tau_0,y_0}[\tilde{u}_2]\leq \tilde{\mathcal{F}}^N_{t_0,x_0}[u_2].
\end{equation*}
By Proposition \ref{difficultest},
\begin{align*}
	&\quad\int_{ K_{t_0,x_0}^N}|F^\omega_M||u_1||u_2|^\frac{4}{d-2}= \int_{K_{\tau_0,y_0}^M} |F^\omega_M||\tilde{u}_1||\tilde{u}_2|^\frac{4}{d-2}\\
	&\leq C(\omega,\delta)M^{-s+\frac{d+2}{8}+2d\delta} \|\tilde{u}_1\|_{L^\infty_tL^2_x(K^M_{\tau_0,y_0})}\|\tilde{u}_2\|_{L^\infty_tL_x^2(K_{\tau_0,y_0}^M)}^\frac{d-4}{2}\\
	&\quad\quad\times\left(\|\tilde{u}_2\|^\frac{d(6-d)}{2(d-2)}_{L^\infty_tL^\frac{2d}{d-2}_x(K_{\tau_0,y_0}^M)}+(\mathcal{F}_{\tau_0,y_0}^M[\tilde{u}_2])^\frac{6-d}{4}\right)\\
	&\leq C(\omega,\delta)M^{-s+\frac{d+2}{8}+C_d\delta}(\tilde{\mathcal{E}}^N_{t_0,x_0}[u])^\frac{1}{2}(\tilde{\mathcal{E}}^N_{t_0,x_0}[u])^\frac{d-4}{4}\\
	&\quad\quad\times\left((\tilde{\mathcal{E}}^N_{t_0,x_0}[u])^\frac{6-d}{4}+(\tilde{\mathcal{F}}^N_{t_0,x_0}[u])^\frac{6-d}{4}\right)\\
	&\lesssim C(\omega,\delta)M^{-s+\frac{d+2}{8}+C_d\delta}(\tilde{\mathcal{E}}^N_{t_0,x_0}[u]+\tilde{\mathcal{F}}^N_{t_0,x_0}[u])
\end{align*}
$\theta+20\delta<\alpha = s-\frac{d+2}{8}-2d\delta$, $0<\beta <s+\frac{d-3}{2}\theta-1 $, we obtain $s> \frac{d^2-d+10}{8(d-1)}$.
It means $\frac{11}{12}<s<1$, for $d = 4$; $\frac{15}{16}<s<1$, for $d = 5$.
\end{proof}

To show Proposition \ref{difficultest}, we first show the fast decay property of the linear wave in $K_{t_0,x_0}^N$ if the essential support of the initial data is away from $x_0$.
	\begin{lem}\label{outdecay}
		For $k\in K_N$, $|\tilde{l}-x_0|>CN$, $f_k = P_k f$,
		\begin{equation}\label{far}
			\left\|e^{i(t-t_0)\langle\nabla\rangle} \tilde{P}_k(\varphi_{\tilde{l}} f_k)\right\|_{L^\infty_{t,x}(K^N_{t_0,x_0})}\lesssim_L |\tilde{l} -x_0|^{-L}\|f_k\|_{L^2},\quad \forall ~L\in \mathbb{N}.
		\end{equation}
		the constant is independent to $t_0,x_0,N,k,\tilde{l}$.
	\end{lem}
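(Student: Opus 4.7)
The plan is to represent the expression as an oscillatory integral and apply a non-stationary phase argument in the frequency variable, exploiting the geometric fact that waves of speed at most $1$ cannot reach the truncated cone $K^N_{t_0,x_0}$ from the spatial support of $\varphi_{\tilde l}$ within time $N$ when $|\tilde l-x_0|>CN$. Concretely, let $\tilde\psi(\eta):=\sum_{|j|_\infty\le 1}\varphi(\eta-j)$ so that $\tilde P_k$ has symbol $\tilde\psi(\cdot-k)$, and write
$$e^{i(t-t_0)\langle\nabla\rangle}\tilde P_k(\varphi_{\tilde l}f_k)(x)=\frac{1}{(2\pi)^d}\int\!\!\int e^{i\Phi(\xi;\,t-t_0,\,x-y)}\,\tilde\psi(\xi-k)\,\varphi_{\tilde l}(y)\,f_k(y)\,d\xi\,dy,$$
with $\Phi(\xi;s,z)=s\langle\xi\rangle+z\cdot\xi$, so that $\nabla_\xi\Phi=s\,\xi/\langle\xi\rangle+z$.

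The first task is the phase lower bound. On the relevant set $(t,x)\in K^N_{t_0,x_0}$, $y\in\mathrm{supp}(\varphi_{\tilde l})$, $\xi\in k+\mathrm{supp}(\tilde\psi)$, the bounds $|x-x_0|_\infty\le 2N-(t-t_0)$ and $|y-\tilde l|_\infty\le 1$ yield $|x-y|\ge |\tilde l-x_0|-\sqrt d\,(2N-(t-t_0))-\sqrt d$. Combining with $|\xi/\langle\xi\rangle|<1$ gives
$$|\nabla_\xi\Phi|\ \ge\ |x-y|-(t-t_0)\ \ge\ |\tilde l-x_0|-2\sqrt dN-\sqrt d+(\sqrt d-1)(t-t_0),$$
so choosing the constant $C$ in $|\tilde l-x_0|>CN$ sufficiently large (in terms of $d$) forces $|\nabla_\xi\Phi|\gtrsim |\tilde l-x_0|$ uniformly, and in particular $|\nabla_\xi\Phi|\gtrsim N$.

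Now I would integrate by parts $L$ times in $\xi$ using the operator $L_0=(i|\nabla_\xi\Phi|^2)^{-1}\nabla_\xi\Phi\cdot\nabla_\xi$, which satisfies $L_0 e^{i\Phi}=e^{i\Phi}$. Since $\tilde\psi$ is Schwartz with compact support, and every $\xi$-derivative of order $\ge 2$ of $\Phi$ is bounded by $O(t-t_0)\le O(N)\le O(|\tilde l-x_0|)$, Leibniz's rule gives $|\partial^\alpha_\xi(1/|\nabla_\xi\Phi|^2)|\lesssim_\alpha|\tilde l-x_0|^{-2}$ on the support, so each application of the adjoint $L_0^*$ produces an honest gain of $|\tilde l-x_0|^{-1}$. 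The resulting oscillatory kernel $K_L(t-t_0,x,y)$ obeys $|K_L|\lesssim_L|\tilde l-x_0|^{-L}$ uniformly in $y\in\mathrm{supp}(\varphi_{\tilde l})$; Cauchy--Schwarz in $y$ then yields
$$\bigl|e^{i(t-t_0)\langle\nabla\rangle}\tilde P_k(\varphi_{\tilde l}f_k)(x)\bigr|\lesssim_L|\tilde l-x_0|^{-L}\|\varphi_{\tilde l}\|_{L^2}\|f_k\|_{L^2}\lesssim_L|\tilde l-x_0|^{-L}\|f_k\|_{L^2}$$
uniformly in $(t,x)\in K^N_{t_0,x_0}$, which is exactly \eqref{far}. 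The only delicate point is the phase lower bound, where care is needed to balance the relativistic speed $|\xi|/\langle\xi\rangle<1$ against the shrinking cone $K^N_{t_0,x_0}(t)$; once $C=C(d)$ is fixed so that $(\sqrt d-1)(t-t_0)$ absorbs the lower-order errors, the rest is standard non-stationary phase with constants manifestly independent of $t_0,x_0,N,k,\tilde l$.
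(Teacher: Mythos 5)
Your proof is correct and proceeds by essentially the same non-stationary phase argument as the paper: the paper also writes $e^{i(t-t_0)\langle\nabla\rangle}\tilde P_k(\varphi_{\tilde l}f_k)$ as an oscillatory integral in $\xi$ (there parametrized via a Fourier-side variable $\eta$ rather than your physical-space $y$, but the two representations are equivalent), obtains the same lower bound $|\nabla_\xi\Phi|\gtrsim|\tilde l-x_0|$ from $|\tilde l-x_0|>CN$ together with $|x-x_0|\lesssim N$ and group velocity $|\xi|/\langle\xi\rangle<1$, and then integrates by parts. The only cosmetic difference is that the paper peels off $e^{i\tilde l\cdot\eta}$ to isolate the phase $(x-\tilde l)\cdot\xi+(t-t_0)\langle\xi+k\rangle$ whereas you keep the $y$-integral; both lead to the same estimate after Cauchy--Schwarz in the compactly supported variable.
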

	\begin{proof}[\textbf{Proof of Lemma \ref{outdecay}}]
		\begin{align*}
			&\quad e^{i(t-t_0)\langle\nabla\rangle} \tilde{P}_k(\varphi_{\tilde{l}} f_k)(x)\\
			&=\frac{1}{(2\pi)^{2d}}\int_{\mathbb{R}^d_{\eta}}\hat{f}(\eta)\varphi_k(\eta)\int_{\mathbb{R}_\xi^d}e^{ix\cdot\xi + i(t-t_0)\langle \xi \rangle} \tilde{\varphi}_k(\xi)\hat{\varphi}_{\tilde{l}}(\xi-\eta)d\xi d\eta\\
			& = \frac{e^{ix\cdot k}}{(2\pi)^{2d}}\int_{\mathbb{R}^d_{\eta}}\hat{f}(\eta)\varphi_k(\eta)e^{i\tilde{l}\cdot\eta}\int_{\mathbb{R}_\xi^d}e^{i(x-\tilde{l})\cdot\xi + i(t-t_0)\langle \xi+k \rangle} \tilde{\varphi}(\xi)\hat{\varphi}(\xi+k-\eta)d\xi d\eta.
		\end{align*}
		$|\nabla_\xi \big((x-\tilde{l})\cdot\xi + (t-t_0)\langle \xi+k \rangle\big)|\gtrsim |\tilde{l}-x_0|, \forall~(t,x)\in K_{t_0,x_0}^N$, we obtain (\ref{far}) by nonstationary phase argument.
	\end{proof}
	
	\begin{proof}[\textbf{Proof of Proposition \ref{difficultest}}] We only show the proof related to $F^{+, \omega}_N$. The argument for $F_N^{-,\omega}$ is almost the same. 
	For $C$ large enough which relies on $d$ only, 
	\begin{align*}
		&\quad\int_{ K_{t_0,x_0}^N} |F^{+,\omega}_N||u_1||u_2|^\frac{4}{d-2}\\
		&\leq \int_{ K_{t_0,x_0}^N}\left|\sum_{k\in K_N}\sum_{|\tilde{l}-x_0|>CN}X_k(\omega_1)e^{i(t-t_0)\langle\nabla\rangle} W_{k,\tilde{l},t_0}^{\omega_2}\right||u_1||u_2|^\frac{4}{d-2}\\
		&\quad +\int_{ K_{t_0,x_0}^N}\left|\sum_{k\in K_N}\sum_{|\tilde{l}-x_0|\leq CN}X_k(\omega_1)e^{i(t-t_0)\langle\nabla\rangle} W_{k,\tilde{l},t_0}^{\omega_2}\right||u_1||u_2|^\frac{4}{d-2}\\
		&:=I_1+I_2.
	\end{align*}
	For $I_1$, we use the decay estimate of $W_{k,\tilde{l}, t_0}^{\omega_2}$,
	\begin{align*}
		I_1 &\leq N \left\|\sum_{k\in K_N}\sum_{|\tilde{l}-x_0|>CN}X_k(\omega_1)e^{i(t-t_0)\langle\nabla\rangle} W_{k,\tilde{l},t_0}^{\omega_2}\right\|_{L^\infty_{t,x}(K^N_{t_0,x_0})}\\
		&\quad\quad\times\|u_1\|_{L^\infty_tL^2_x(K^N_{t_0,x_0})}\|u_2\|_{L^\infty_tL^\frac{8}{d-2}(K_{t_0,x_0}^N)}^\frac{4}{d-2}.
	\end{align*}
	By Lemma \ref{outdecay}, for any $L>\frac{3}{2}d$, recall the definitions (\ref{definitionfkt0}), (\ref{definiitionwavepacket}),
	\begin{align*}
		&\quad \left\|\sum_{N\in 2^{\mathbb{N}_0}}N^{L}\left\|\sum_{k\in K_N}\sum_{|\tilde{l}-x_0|>CN}X_k(\omega_1)e^{i(t-t_0)\langle\nabla\rangle} W_{k,\tilde{l},t_0}^{\omega_2}\right\|_{L^\infty_{t,x}(K^N_{t_0,x_0})}\right\|_{L^1_{\omega_1,\omega_2}}\\
		&\leq \sum_{N\in 2^{\mathbb{N}_0}} N^L\sum_{k\in K_N}\|X_k\|_{L^1_{\omega_1}}\sum_{|\tilde{l}-x_0|>CN}\left\|\left\|e^{i(t-t_0)\langle\nabla\rangle} W_{k,\tilde{l},t_0}^{\omega_2}\right\|_{L^\infty_{t,x}(K^N_{t_0,x_0})}\right\|_{L^1_{\omega_2}}\\
		&\lesssim_L \sum_{N\in 2^{\mathbb{N}_0}}N^{L}\sum_{k\in K_N}\sum_{|\tilde{l}-x_0|>CN}|\tilde{l}-x_0|^{-2L}\|\|{f}^{\omega_2}_{k,t_0}\|_{L^2_x}\|_{L^1_{\omega_2}}\sup_{k}\|X_k\|_{\Psi}\\
		&\lesssim_L \sum_{N\in 2^{\mathbb{N}_0}}N^{d-L} \sum_{k\in K_N} \|f_k^{\omega_2}\|_{L^2_{\omega_2}L^2}\\
		&\lesssim_L \sum_{N\in 2^{\mathbb{N}_0}}N^{\frac{3}{2}d-L}\|f_k^{\omega_2}\|_{L^2_{\omega_2}l^2_{k\in K_N}L^2}.
	\end{align*}
	Denote
	\begin{equation}\label{mindquality}
		h(\omega_2):=\|\langle k \rangle^s\|f_k^{\omega_2}\|_{ L^2_x}\|_{l^2_k}.
	\end{equation}
	Then, 
	\begin{equation}\label{basicestimateorth}
		\begin{array}{ll}
			\|h(\omega_2)\|_{L^2_\omega} &\lesssim \|\langle k \rangle^s f_{k,l}\|_{l^2_k L^2_x l^2_l}\\
			&\lesssim \|\langle k \rangle^sP_k(\varphi_l f)\|_{l^2_{k,l}L^2_x}+ \|\langle k \rangle^s \langle \nabla \rangle^{-1}P_k(\varphi_l g)\|_{l^2_{k,l}L^2_x}\\
			&\lesssim \|(f,g)\|_{\mathcal{H}^s}.
		\end{array}		
	\end{equation}
	Then, we have 
	\begin{align*}
		&\left\|\sum_{N\in 2^{\mathbb{N}_0}}N^{L}\left\|\sum_{k\in K_N}\sum_{|\tilde{l}-x_0|>CN}X_k(\omega_1)e^{i(t-t_0)\langle\nabla\rangle} W_{k,\tilde{l},t_0}^{\omega_2}\right\|_{L^\infty_{t,x}(K^N_{t_0,x_0})}\right\|_{L^1_{\omega_1,\omega_2}}\\
		&\lesssim \sum_{N\in 2^{\mathbb{N}_0}}N^{\frac{3}{2}d-L-s}\|(f,g)\|_{\mathcal{H}^s}<\infty.
	\end{align*}
	Denote
	\begin{align*}
		&\quad C(\omega,L):=
		\sum_{N\in 2^{\mathbb{N}_0}}N^{L}\left\|\sum_{k\in K_N}\sum_{|\tilde{l}-x_0|>CN}X_k(\omega_1)e^{i(t-t_0)\langle\nabla\rangle} W_{k,\tilde{l},t_0}^{\omega_2}\right\|_{L^\infty_{t,x}(K^N_{t_0,x_0})}.
	\end{align*}
	We have that $C(\omega,L)<\infty$, $a.e.$ $\omega$,  and
	\begin{align*}
		&\quad I_1\lesssim C(\omega,L) N^{1+\frac{3}{2}d-L-s}h(\omega_2) \|u_1\|_{L^\infty_tL^2_x(K^N_{t_0,x_0})}\|u_2\|_{L^\infty_tL^\frac{8}{d-2}(K_{t_0,x_0}^N)}^\frac{4}{d-2}\\
		&\lesssim N^{1+\frac{3}{2}d-L-s}C(\omega,L) \|u_1\|_{L^\infty_tL^2_x(K^N_{t_0,x_0})}\|u_2\|^\frac{d-4}{2}_{L^\infty_t L^2_x(K_{t_0,x_0}^N)}\|u_2\|^\frac{d(6-d)}{2(d-2)}_{L^\infty_tL_x^\frac{2d}{d-2}(K_{t_0,x_0}^N)}.
	\end{align*}
	It is enough to choose $L$ such that $1+\frac{3}{2}d-L<\frac{d+2}{8}$.
	
	To estimate $I_2$, we need the Bourgain's bushes argument for this problem from Bringmann \cite{Bringmann1}.
	\begin{align*}
		\mathscr{A}_{m}^{\omega_2} &=\mathscr{A}_{m,t_0,x_0,N}^{\omega_2}\\
		&: = \{(k,\tilde{l})\in \mathbb{Z}^{2d} :k\in K_N,|\tilde{l}-x_0|\leq CN, \|W_{k,\tilde{l},t_0}^{\omega_2}\|_{L^2}\in [2^m,2^{m+1}) \}.
	\end{align*}
	We have $\#\mathscr{A}_m^\omega \lesssim_d N^{2d}$. We choose $C_d>4d$.
	\begin{align*}
		I_2 &\leq \sum_{m\in \mathbb{Z}}\int_{ K_{t_0,x_0}^N}\left|\sum_{(k,l)\in \mathscr{A}^{\omega_2}_m}X_k(\omega_1)e^{i(t-t_0)\langle\nabla\rangle} W_{k,\tilde{l},t_0}^{\omega_2}\right||u_1||u_2|^\frac{4}{d-2}\\
		&\leq \sum_{2^m\leq  N^{-C_d}}\int_{ K_{t_0,x_0}^N}\left|\sum_{(k,l)\in \mathscr{A}^{\omega_2}_m}X_k(\omega)e^{i(t-t_0)\langle\nabla\rangle} W_{k,\tilde{l},t_0}^{\omega_2}\right||u_1||u_2|^\frac{4}{d-2}\\
		&\quad + \sum_{2^m>  N^{-C_d}}\int_{ K_{t_0,x_0}^N}\left|\sum_{(k,l)\in \mathscr{A}^{\omega_2}_m}X_k(\omega_1)e^{i(t-t_0)\langle\nabla\rangle} W_{k,\tilde{l},t_0}^{\omega_2}\right||u_1||u_2|^\frac{4}{d-2}\\
		&:= II_1+II_2.
	\end{align*}
	For $II_1$, we use the smallness of $W^{\omega_2}_{k,\tilde{l},t_0}$. Similar to the argument of $I_1$, we denote
	\begin{align*}
		C_1(\omega):=\sum_{N\in 2^{\mathbb{N}_0}} N^{\frac{C_d}{2}}\sum_{2^m\leq  N^{-C_d}}\left\|\sum_{(k,l)\in \mathscr{A}^{\omega_2}_{m}}X_k(\omega_1)e^{i(t-t_0)\langle\nabla\rangle} W_{k,\tilde{l},t_0}^{\omega_2}\right\|_{L^\infty_{t,x}(K^N_{t_0,x_0})}.
	\end{align*}
	Then, by Bernstein inequality, we have
	\begin{align*}
		\|C_1(\omega)\|_{L^1_{\omega_1,\omega_2}}&\lesssim \sum_{N\in 2^{\mathbb{N}_0}}N^{\frac{C_d}{2}}\sum_{2^m\leq N^{-C_d}}2^m\#\mathscr{A}_{m}^{\omega_2}\sup_{k}\|X_k\|_{\Psi}\\
		&\lesssim \sum_{N\in 2^{\mathbb{N}_0}}N^{-\frac{C_d}{2}+2d}<\infty.
	\end{align*}
	Thus,
	\begin{align*}
		II_1&\leq N \sum_{2^m\leq  N^{-C_d}}\left\|\sum_{(k,l)\in \mathscr{A}^{\omega_2}_{m}}X_k(\omega_1)e^{i(t-t_0)\langle\nabla\rangle} W_{k,\tilde{l},t_0}^{\omega_2}\right\|_{L^\infty_{t,x}(K^N_{t_0,x_0})}\\
		&\quad\quad\times\|u_1\|_{L^\infty_tL^2_x(K^N_{t_0,x_0})}\|u_2\|_{L^\infty_tL^\frac{8}{d-2}(K_{t_0,x_0}^N)}^\frac{4}{d-2}\\
		&\leq C_1(\omega)N^{1-\frac{C_d}{2}} \|u_1\|_{L^\infty_tL^2_x(K^N_{t_0,x_0})}\|u_2\|_{L^\infty_tL^\frac{8}{d-2}(K_{t_0,x_0}^N)}^\frac{4}{d-2}.
	\end{align*}
	We can enlarge $C_d$, such that $1-\frac{C_d}{2}< -s+\frac{d+2}{8}$.
	
	For $2^m > N^{-C_d}$, we need to decompose $\mathscr{A}_m^{\omega_2}$ into bushes and a nearly non-overlapping set,
	
	\begin{lem}\label{decomposebushandremainder} For  $\mu_{m}^{\omega_2} := \lceil N^\frac{d-6}{4}\#\mathscr{A}_{m}^{\omega_2}\rceil$, we have the decomposition of $\mathscr{A}_{m}^{\omega_2}$.
		\begin{align*}
			\mathscr{A}_{m}^{\omega_2} &= \bigsqcup_{j=1}^{J^{\omega_2}_{m,t_0,x_0,N}}\mathscr{B}_{j,m,t_0,x_0,N}^{\omega_2} \bigsqcup\mathscr{D}_{m,t_0,x_0,N}^{\omega_2}:= \bigsqcup_{j=1}^{J_m^{\omega_2}} \mathscr{B}_{j,m}^{\omega_2}\bigsqcup \mathscr{D}_{m}^{\omega_2}.
		\end{align*}
		$\#\mathscr{B}_{j,m}^{\omega_2}\geq \mu_{m}^{\omega_2}, ~j=1, 2, \cdots, J_{m}^{\omega_2}$. 
		\begin{itemize}
			\item For each $\mathscr{B}_{j,m}^{\omega_2}$, there exists a cube $Q$ with length $N^\delta$ such that 
			$$T_{k,\tilde{l}}\cap 2Q\neq \Phi, \quad\forall~(k,\tilde{l})\in \mathscr{B}_{j,m}^{\omega_2}.$$
			\item For each cube $Q$ with length $N^\delta$, $\#\{(k,\tilde{l})\in \mathscr{D}_{m}^{\omega_2}: T_{k,\tilde{l}}\cap 2Q\neq \Phi\}< \mu_{m}^{\omega_2}$. 
		\end{itemize}
		Denote  
		\begin{align*}
			W(\omega) &= \sup_N N^{-2d\delta} \sup_{2^m>N^{-C_d}}\sup_{\substack{t_0=0,N,\cdots,\lfloor N^\theta\rfloor N;\\x_0\in N\mathbb{Z}^d}}\sup_{j=1,2,\cdots,J^{\omega_2}_m}\\
			&\quad\quad \quad 2^{-m}(\#\mathscr{B}_{j,m}^{\omega_2})^{-\frac{1}{2}}\left\|\sum_{(k,l)\in \mathscr{B}^{\omega_2}_{j,m}}X_k(\omega_1)e^{i(t-t_0)\langle\nabla\rangle} W_{k,\tilde{l},t_0}^{\omega_2}\right\|_{L^\infty_{t,x}}\\
			&\quad + \sup_N N^{-2d\delta}\sup_{2^m>N^{-C_d}}\sup_{\substack{t_0=0,N,\cdots,\lfloor N^\theta\rfloor N;\\x_0\in N\mathbb{Z}^d}}\\
			&\quad\quad\quad 2^{-m}(\mu_m^{\omega_2})^{-\frac{1}{2}}\left\|\sum_{(k,l)\in \mathscr{D}^{\omega_2}_{m}}X_k(\omega_1)e^{i(t-t_0)\langle\nabla\rangle} W_{k,\tilde{l},t_0}^{\omega_2}\right\|_{L^\infty_{t,x}(K^N_{t_0,x_0})}
		\end{align*}
		Then, $W(\omega)<\infty$, $\omega$, $a.e$.
	\end{lem}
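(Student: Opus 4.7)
The first claim (existence of the decomposition) is obtained by a greedy algorithm. Fix $\omega_2, N, m, t_0, x_0$ and initialize $\mathscr{A}\leftarrow \mathscr{A}_m^{\omega_2}$. If there is a cube $Q\subset \mathbb{R}^d$ of side length $N^\delta$ whose dilate $2Q$ is intersected by the tubes $T_{k,\tilde{l}}$ of at least $\mu_m^{\omega_2}$ remaining elements, declare those elements a new bush $\mathscr{B}_{j,m}^{\omega_2}$, remove them from $\mathscr{A}$, and continue; otherwise stop and set $\mathscr{D}_m^{\omega_2}\leftarrow \mathscr{A}$. Since $\#\mathscr{A}_m^{\omega_2}\lesssim N^{2d}$ and each step removes at least $\mu_m^{\omega_2}\geq 1$ elements, the process halts with $J_m^{\omega_2}\leq \#\mathscr{A}_m^{\omega_2}\lesssim N^{2d}$ bushes, and the two bullet-point conditions are immediate from the stopping rule.

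For $W(\omega)<\infty$ almost surely, by Borel--Cantelli it suffices to establish $\|W\|_{L^1_\omega}<\infty$. The outer suprema inside $W(\omega)$ range over polynomially many parameters in $N$: there are $1+\lfloor N^\theta\rfloor$ choices of $t_0$; $x_0\in N\mathbb{Z}^d$ contributes only through the $O(N^{O(1)})$ cubes meeting the effective spatial support of the wave packets; $j\leq J_m^{\omega_2}\lesssim N^{O(1)}$; and $2^m$ ranges over the $O(\log N)$ dyadic values in $[N^{-C_d},h(\omega_2)]$ (using $\sum_{(k,\tilde{l})}\|W_{k,\tilde{l},t_0}^{\omega_2}\|_{L^2}^2\lesssim h(\omega_2)^2$). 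Lemma \ref{supsubgaussianlemmma} converts every such sup into a $\log$-factor, and combined with Chebyshev in $\omega_2$ this replaces the total supremum by an $N^{O(\varepsilon)}$ loss, absorbed by $N^{-2d\delta}$ for $\varepsilon\ll\delta$. Final summability in $N$ is guaranteed by spare negative powers from the bound below.

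For a single bush, group terms by the frequency cube $k$ and write
\[
\Sigma(t,x):=\sum_{(k,\tilde{l})\in\mathscr{B}_{j,m}^{\omega_2}}X_k(\omega_1)e^{i(t-t_0)\langle\nabla\rangle}W_{k,\tilde{l},t_0}^{\omega_2}(x)=\sum_k X_k(\omega_1)\,G_k(t,x),
\]
with $G_k:=\sum_{\tilde{l}:(k,\tilde{l})\in\mathscr{B}_{j,m}^{\omega_2}}e^{i(t-t_0)\langle\nabla\rangle}W_{k,\tilde{l},t_0}^{\omega_2}$. Convert $L^\infty_{t,x}$ to $L^q_tL^r_x$ at large finite $q,r$ using Lemma \ref{inftytofine} and Bernstein's inequality, losing only $N^{O(\varepsilon)}$ because every function is frequency-localized to $|\xi|\lesssim N$. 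Khinchin in $\omega_1$ then yields
\[
\|\Sigma\|_{L^p_{\omega_1}L^q_tL^r_x}\lesssim p^{1/2}\Bigl\|\Bigl(\sum_k|G_k|^2\Bigr)^{1/2}\Bigr\|_{L^q_tL^r_x},
\]
and by the almost-orthogonality of the $P_k$ (Lemma \ref{equivalentnorm}) together with the refined Strichartz estimate of Lemma \ref{Refinstr}, the right-hand side is controlled by $N^{O(\varepsilon)}\bigl(\sum_k\|G_k\|_{L^2}^2\bigr)^{1/2}\lesssim N^{O(\varepsilon)}(\#\mathscr{B}_{j,m}^{\omega_2})^{1/2}2^m$, exactly what the definition of $W(\omega)$ requires. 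The remainder $\mathscr{D}_m^{\omega_2}$ is handled identically, except that the second bullet point limits the number of tubes over each cube of side $N^\delta$ to fewer than $\mu_m^{\omega_2}$; partitioning $\mathbb{R}^d$ into such cubes and applying the same argument cube by cube replaces $(\#\mathscr{D}_m^{\omega_2})^{1/2}$ by $(\mu_m^{\omega_2})^{1/2}$.

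The main obstacle is retaining the square-root gain $(\#\mathscr{B})^{1/2}$ when passing from the Khinchin square function in $L^q_tL^r_x$ back to a pointwise bound in $L^\infty_{t,x}$: a naive triangle inequality would produce $\#\mathscr{B}\cdot 2^m$ and destroy the estimate. The geometric definition of a bush is precisely what limits the number of wave packets contributing pointwise on each $N^\delta$-cube, and it is the interaction of this gain with the induction on scales in Proposition \ref{uniformenergybyinduction} that forces the threshold exponent $s>\frac{d^2-d+10}{8(d-1)}$. This is the Klein--Gordon counterpart of the bushes argument from \cite{Bringmann1}; the adaptation goes through thanks to the refined Klein--Gordon dispersive and Strichartz estimates of Lemmas \ref{refineddecayestimates}--\ref{Refinstr}.
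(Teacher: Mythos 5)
Your outline captures the paper's strategy: the greedy construction of bushes (which the paper defers to Bringmann's Proposition 4.3), the reduction of $W(\omega)<\infty$ a.e.\ to an $L^1_\omega$ bound, the passage from $L^\infty_{t,x}$ to $L^q_{t,x}$ via Bernstein and Lemma \ref{inftytofine}, Khinchin's inequality in $\omega_1$ to produce an $l^2_k$ square function, the refined Strichartz estimate (Lemma \ref{Refinstr}) to reduce to $L^2$ norms, and the geometric constraint that, since a bush is anchored on a single cube $2Q$ of side $N^\delta$, each frequency cell $k$ contributes $\lesssim N^{d\delta}$ spatial labels $\tilde{l}$, so that $\|\#\mathscr{B}_{j,m}^{\omega_2}(k)\|_{l^2_k}\lesssim N^{d\delta/2}(\#\mathscr{B}_{j,m}^{\omega_2})^{1/2}$. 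This is essentially the same route as the paper, and your accounting of the final numerology matches up to the imprecision of your $N^{O(\varepsilon)}$ losses (the real loss per step is $N^{d\delta/2}$, tuned to be dominated by the $N^{-2d\delta}$ prefactor).

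However, two essential ingredients are missing. First, your treatment of the remainder set $\mathscr{D}_m^{\omega_2}$ is too hasty. The second bullet controls only the number of tubes whose $T_{k,\tilde{l}}$ meet a given cube $2Q$; the tubes that do \emph{not} pass near $Q$ still contribute at points of $Q$, and ``applying the same argument cube by cube'' does not by itself replace $(\#\mathscr{D}_m^{\omega_2})^{1/2}$ by $(\mu_m^{\omega_2})^{1/2}$. The paper handles the far tubes (and the exterior region $I_{t_0}^N\times Q(x_0,CN)^c$) separately, using the nonstationary-phase decay estimate of Lemma \ref{outdecay} to gain a factor $N^{-\delta L}\|f_{k}^{\omega_2}\|_{L^2}$; this near/far dichotomy inside each cube is a genuine extra step. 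Second, both the decay term $\|f_k^{\omega_2}\|_{L^2}$ and the number of nontrivial parameter tuples $(m,t_0,x_0,j)$ in the outer supremum are $\omega_2$-dependent and not bounded a priori. The paper resolves this by truncating to the event $h(\omega_2)\leq K$, with $h(\omega_2)=\|\langle k\rangle^s\|f_k^{\omega_2}\|_{L^2}\|_{l^2_k}$: under the truncation one gets $\|\tilde{D}^K_{t_0,x_0,m,N}\|_\Psi\lesssim KN^{d\delta}$ and a polynomial-in-$(N,K)$ count of active parameters, so Lemma \ref{supsubgaussianlemmma} applies, and then one lets $K\to\infty$ at the very end using $h(\omega_2)<\infty$ a.e. Without this truncation the sub-Gaussian norm of the $\mathscr{D}$-contribution is not uniformly bounded and the range of $m$ over which the supremum runs is uncontrolled. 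You should incorporate the $K$-truncation and the near/far splitting for $\mathscr{D}$ to close the argument.
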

	We put the proof of this lemma in the Appendix, and continue to the proof of Proposition \ref{difficultest}.

	By Lemma \ref{decomposebushandremainder},
	\begin{align*}
		II_2&\leq \sum_{2^m>  N^{-C_d}} \sum_{j=1}^{J_m^{\omega_2}}\int_{ K_{t_0,x_0}^N}\left|\sum_{(k,l)\in \mathscr{B}^{\omega_2}_{j,m}}X_k(\omega_1)e^{i(t-t_0)\langle\nabla\rangle} W_{k,\tilde{l},t_0}^{\omega_2}\right||u_1||u_2|^\frac{4}{d-2}\\
		&\quad + \sum_{2^m>  N^{-C_d}} \int_{ K_{t_0,x_0}^N}\left|\sum_{(k,l)\in \mathscr{D}^{\omega_2}_{m}}X_k(\omega_1)e^{i(t-t_0)\langle\nabla\rangle} W_{k,\tilde{l},t_0}^{\omega_2}\right||u_1||u_2|^\frac{4}{d-2}\\
		&: = III_1+III_2.
	\end{align*}
	$T_{j,m,\delta}^{\omega_2} = \cup_{(k,\tilde{l})\in \mathscr{B}_{j,m}^{\omega_2}}\{(t,x): t_0\leq t\leq t_0+N, |x-(l-t\cdot \frac{k}{\langle k \rangle})|\leq N^{2\delta}\}$
	\begin{align*}
		K^N_{t_0,x_0} & = (K^N_{t_0,x_0}\cap T_{j,m,\delta}^{\omega_2}) \bigsqcup (K^N_{t_0,x_0}- T_{j,m,\delta}^{\omega_2}) := \tilde{T}_{j,m,\delta}^{\omega_2} \bigsqcup \tilde{T}_{j,m,\delta}^{\omega_2,c}.
	\end{align*}
	In $\tilde{T}_{j,m,\delta}^{\omega_2,c}$, $W_{k,\tilde{l},t_0}^{\omega_2}$ is essentially small. We can use non-stationary argument to obtain fast decay.
	\begin{lem}\label{fastdecay}For any $L>0$, there exists $W_R(\omega)<\infty$, $a.e$. such that
	\begin{align*}
		\left\|\sum_{(k,l)\in \mathscr{B}^{\omega_2}_{j,m}}X_k(\omega_1)e^{i(t-t_0)\langle\nabla\rangle} W_{k,\tilde{l},t_0}^{\omega_2}\right\|_{L^\infty_{t,x}(\tilde{T}_{j,m,\delta}^{\omega_2,c})}\leq W_R(\omega)N^{-L}2^m\#\mathscr{B}_{j,m}^{\omega_2}.
	\end{align*}
	\end{lem}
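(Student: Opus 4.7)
The plan is to use nonstationary phase on each individual wave packet $e^{i(t-t_0)\langle\nabla\rangle}W^{\omega_2}_{k,\tilde l,t_0}$ to obtain arbitrary polynomial decay outside its own tube $T_{k,\tilde l}\subset T^{\omega_2}_{j,m,\delta}$, and then close the bound by a triangle inequality together with the $L^2$ size $\|W^{\omega_2}_{k,\tilde l,t_0}\|_{L^2}\leq 2^{m+1}$ built into the definition of $\mathscr A^{\omega_2}_m$ and a Borel--Cantelli argument on the maximum of the $|X_k|$'s.

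Explicitly, I would write
$$e^{i(t-t_0)\langle\nabla\rangle}W^{\omega_2}_{k,\tilde l,t_0}(x)=\int_{\mathbb{R}^d}\varphi_{\tilde l}(y)f^{\omega_2}_{k,t_0}(y)\int_{\mathbb{R}^d}e^{i\Phi(\xi)}\tilde\varphi_k(\xi)\,d\xi\,dy,\quad \Phi(\xi):=(x-y)\cdot\xi+(t-t_0)\langle\xi\rangle,$$
and decompose
$$\nabla_\xi\Phi=\bigl(x-\tilde l+(t-t_0)\tfrac{k}{\langle k\rangle}\bigr)+(\tilde l-y)+(t-t_0)\bigl(\tfrac{\xi}{\langle\xi\rangle}-\tfrac{k}{\langle k\rangle}\bigr).$$
On the supports of $\varphi_{\tilde l}$ and $\tilde\varphi_k$ we have $|\tilde l-y|\lesssim 1$, and since $k\in K_N$ (so $\langle k\rangle\sim N$) also $|\xi/\langle\xi\rangle-k/\langle k\rangle|\lesssim N^{-1}$; together with $|t-t_0|\leq N$ the last two terms contribute $O(1)$ on $K^N_{t_0,x_0}$. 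Outside $T_{k,\tilde l}$ the first term has modulus $\gtrsim N^{2\delta}\gg 1$, giving $|\nabla_\xi\Phi|\gtrsim N^{2\delta}$. Iterated integration by parts in $\xi$ together with $\|\varphi_{\tilde l}f^{\omega_2}_{k,t_0}\|_{L^1}\lesssim \|\varphi_{\tilde l}f^{\omega_2}_{k,t_0}\|_{L^2}\sim \|W^{\omega_2}_{k,\tilde l,t_0}\|_{L^2}$ (H\"older on the unit-scale support $\varphi_{\tilde l}$ plus the equivalent-norm Lemma \ref{equivalentnorm}) then yields, for any $L'\in\mathbb{N}$,
$$|e^{i(t-t_0)\langle\nabla\rangle}W^{\omega_2}_{k,\tilde l,t_0}(x)|\lesssim_{L'}N^{-2\delta L'}\|W^{\omega_2}_{k,\tilde l,t_0}\|_{L^2}\leq C_{L'}N^{-2\delta L'}\cdot 2^{m+1}$$
uniformly in $(t,x)\in\tilde T^{\omega_2,c}_{j,m,\delta}$ and $(k,\tilde l)\in\mathscr B^{\omega_2}_{j,m}$.

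Summing the pointwise bound over $(k,\tilde l)\in\mathscr B^{\omega_2}_{j,m}$ and pulling out $\max_{k\in K_N}|X_k(\omega_1)|$, I get
$$\Bigl\|\sum_{(k,\tilde l)\in\mathscr B^{\omega_2}_{j,m}}X_k(\omega_1)e^{i(t-t_0)\langle\nabla\rangle}W^{\omega_2}_{k,\tilde l,t_0}\Bigr\|_{L^\infty_{t,x}(\tilde T^{\omega_2,c}_{j,m,\delta})}\lesssim_{L'}\max_{k\in K_N}|X_k(\omega_1)|\cdot N^{-2\delta L'}\cdot 2^m\,\#\mathscr B^{\omega_2}_{j,m}.$$
Given $L>0$, I would choose $L'$ with $2\delta L'\geq L+d+2$ and set
$$W_R(\omega):=C_{L'}\sup_{N\in 2^{\mathbb{N}_0}}N^{-(d+1)}\max_{k\in K_N}|X_k(\omega_1)|.$$
By Lemma \ref{supsubgaussianlemmma}, $\|\max_{k\in K_N}|X_k|\|_{L^1_{\omega_1}}\lesssim\log\langle\#K_N\rangle\lesssim\log N$, so Markov's inequality combined with the dyadic Borel--Cantelli lemma yields $W_R(\omega)<\infty$ almost surely. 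Since the preceding estimate is independent of $(t_0,x_0,m,j)$, this proves the claim. The main technical point is the gradient lower bound: the term $(t-t_0)(\xi/\langle\xi\rangle-k/\langle k\rangle)$ could a priori have been of order $|t-t_0|\sim N$, but on the frequency block $K_N$ the Klein--Gordon dispersion curvature $\sim\langle k\rangle^{-1}\sim N^{-1}$ precisely cancels the factor $N$; this is the Klein--Gordon analog of Bringmann's wave-zone estimate on a dyadic frequency shell, and once it is secured, the rest is standard integration by parts and sub-Gaussian maximal inequalities.
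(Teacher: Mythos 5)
Your proof is in substance the same as the paper's: a nonstationary‑phase bound on each wave packet $e^{i(t-t_0)\langle\nabla\rangle}W^{\omega_2}_{k,\tilde l,t_0}$ outside its tube, triangle inequality over $\mathscr B^{\omega_2}_{j,m}$, and control of the random prefactor. The paper formalizes the last step by defining the normalized random variable $\tilde B_{t_0,x_0,j,m,N}$, showing $\|\tilde B_{t_0,x_0,j,m,N}\|_\Psi\lesssim N^{-L}$, and running the $L^1(\Omega)$ supremum machinery of Lemma \ref{decomposebushandremainder} / Proposition \ref{wavepacketfinallem}; your variant, putting $W_R(\omega)=C_{L'}\sup_N N^{-(d+1)}\max_{k\in K_N}|X_k(\omega_1)|$ and invoking Lemma \ref{supsubgaussianlemmma} together with Markov and Borel--Cantelli, is a slightly more elementary way of reaching the same conclusion and is available here because, after the deterministic pointwise bound, $\max_{k\in K_N}|X_k(\omega_1)|$ is the only random quantity left. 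Your phase decomposition, including the observation that the Klein--Gordon curvature $\sim\langle k\rangle^{-1}$ cancels the factor $|t-t_0|\lesssim N$, is correct and is exactly what underlies Lemma \ref{outdecay}.

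One step needs more care. You assert $\|\varphi_{\tilde l}f^{\omega_2}_{k,t_0}\|_{L^2}\sim\|W^{\omega_2}_{k,\tilde l,t_0}\|_{L^2}$ ``by Lemma \ref{equivalentnorm}.'' That lemma gives an equivalence for the $\ell^2_{k,l}$-sum of phase-space pieces against $\|f\|_{H^s}$, not a term-by-term equivalence; the direction you actually need, $\|\varphi_{\tilde l}f^{\omega_2}_{k,t_0}\|_{L^2}\lesssim\|\tilde P_k(\varphi_{\tilde l}f^{\omega_2}_{k,t_0})\|_{L^2}$, does not follow from it (multiplication by $\varphi_{\tilde l}$ spreads the frequency support of $f^{\omega_2}_{k,t_0}$ beyond the region where $\tilde\varphi_k\equiv 1$, and $\tilde P_k$ can in principle remove an $O(1)$ fraction of the mass). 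To be fair, the paper's own proof of Lemma \ref{fastdecay} lands directly on $\lesssim N^{-\delta L_0}\|W^{\omega_2}_{k,\tilde l,t_0}\|_{L^2}$ by citing the nonstationary argument of Lemma \ref{outdecay}, and it elides the same point. A clean fix within your framework: bound $\|\varphi_{\tilde l}f^{\omega_2}_{k,t_0}\|_{L^2}\leq\|f^{\omega_2}_{k,t_0}\|_{L^2}=\|f^{\omega_2}_{k}\|_{L^2}\leq N^{-s}h(\omega_2)$, use $2^m>N^{-C_d}$ (the regime in which $\mathscr B^{\omega_2}_{j,m}$ appears) to convert this back to $2^m$ on the right-hand side at the cost of $N^{C_d+s}$, absorb the a.e.-finite factor $h(\omega_2)$ into $W_R(\omega)$, and enlarge $L'$ accordingly; the Borel--Cantelli step is unaffected.
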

	\begin{proof}[\textbf{Proof}]
		Define
		\begin{align*}
			W_R(\omega)&:= \sup_N N^{L}\sup_{2^m>N^{-C_d}}\sup_{\substack{t_0=0,N,\cdots,\lfloor N^\theta\rfloor N;\\x_0\in N\mathbb{Z}^d}}
			\sup_{j=1,2,\cdots,J_{m}^{\omega_2}}\\
			& \qquad \qquad\frac{1}{2^m \#\mathscr{B}_{j,m}^{\omega_2}}\left\|\sum_{(k,l)\in \mathscr{B}^{\omega_2}_{j,m}}X_k(\omega_1)e^{i(t-t_0)\langle\nabla\rangle} W_{k,\tilde{l},t_0}^{\omega_2}\right\|_{L^\infty_{t,x}(\tilde{T}_{j,m,\delta}^{\omega_2,c})}.
		\end{align*}
		It is enough to show that $W_R(\omega)<\infty$, $a.e$. The argument is similar to the finiteness of $W(\omega)$. We put the proof of this in Appdendix.
	\end{proof}
	We decompose the integral in $III_1$ into two parts,
	\begin{align*}
		III_1 
		&\leq N\sum_{2^m>  N^{-C_d}}\sum_{j=1}^{J_m^{\omega_2}} \left\|\sum_{(k,l)\in \mathscr{B}^{\omega_2}_{j,m}}X_k(\omega_1)e^{i(t-t_0)\langle\nabla\rangle} W_{k,\tilde{l},t_0}^{\omega_2}\right\|_{L^\infty_{t,x}(\tilde{T}_{j,m,\delta}^{\omega_2,c})}\\
		&\quad\quad \times \|w_1\|_{L^\infty_tL^2_x(K^N_{t_0,x_0})}\|w_2\|_{L^\infty_tL_x^\frac{8}{d-2}(K_{t_0,x_0}^N)}^\frac{4}{d-2}\\
		&\quad+ N^\frac{d-2}{4}\sum_{2^m>  N^{-C_d}}\sum_{j=1}^{J_m^{\omega_2}} \left\|\sum_{(k,l)\in \mathscr{B}^{\omega_2}_{j,m}}X_k(\omega_1)e^{i(t-t_0)\langle\nabla\rangle} W_{k,\tilde{l},t_0}^{\omega_2}\right\|_{L^\infty_{t,x}(\tilde{T}_{j,m,\delta}^{\omega_2})}\\
		&\quad\quad \times \|w_1\|_{L^\infty_tL^2_x(K^N_{t_0,x_0})}\|w_2\|_{L^{\frac{16}{(d-2)(6-d)}}_tL_x^\frac{8}{d-2}(\tilde{T}_{j,m,\delta}^{\omega_2})}^\frac{4}{d-2}\\
		&:= IV_{1}+IV_{2}.
	\end{align*}
	For $IV_1$, by Lemma \ref{fastdecay}, we have
	\begin{align*}
		IV_{1}&\leq N\sum_{2^m>  N^{-C_d}} \sum_{j=1}^{J_m^{\omega_2}} C(\omega,L)N^{-L}2^m\#\mathscr{B}_{j,m}^{\omega_2}\\
		&\quad\quad\times\|u_1\|_{L^\infty_tL^2_x(K^N_{t_0,x_0})}\|u_2\|_{L^\infty_tL_x^\frac{8}{d-2}(K_{t_0,x_0}^N)}^\frac{4}{d-2}\\
		& \leq W_R(\omega)N^{1- L} \sum_{2^m>  N^{-C_d}} 2^m\#\mathscr{A}_{m}^{\omega_2}\|u_1\|_{L^\infty_tL^2_x(K^N_{t_0,x_0})}\|u_2\|_{L^\infty_tL_x^\frac{8}{d-2}(K_{t_0,x_0}^N)}^\frac{4}{d-2}.
	\end{align*}
	For $IV_2$, we use that the integral region is essentially a cone. Recall the definition of $W(\omega)$ from Lemma \ref{decomposebushandremainder}. By $J_m^{\omega_2}\leq \frac{\#\mathscr{A}_m^{\omega_2}}{\mu_m^{\omega_2}}$, $\sum_{j}\#\mathscr{B}_{j,m}^{\omega_2}\leq \#\mathscr{A}_{m}^{\omega_2}$, we have
	\begin{align*}
		IV_{2} &\leq W(\omega) N^{\frac{d-2}{4}+2d\delta}\sum_{2^m>  N^{-C_d}}\sum_{j=1}^{J_m^{\omega_2}}2^m(\#\mathscr{B}_{j,m}^{\omega_2})^\frac{1}{2}\\
		&\quad\quad\times\|u_1\|_{L^\infty_tL^2_x(K^N_{t_0,x_0})}\|u_2\|_{L^\infty_tL_x^2(K_{t_0,x_0}^N)}^\frac{d-4}{2}\|u_2\|_{L^\frac{2d}{d-2}_{t,x}(\tilde{T}^{\omega_2}_{j,m,\delta})}^\frac{d(6-d)}{2(d-2)}\\
		&\leq W(\omega) N^{\frac{d-2}{4}+2d\delta}\sum_{2^m>  N^{-C_d}}2^m\#\mathscr{A}_{m}^{\omega_2} (\mu_m^{\omega_2})^{-\frac{1}{2}}\\
		&\quad\quad\times\|u_1\|_{L^\infty_tL^2_x(K^N_{t_0,x_0})}\|u_2\|_{L^\infty_tL_x^2(K_{t_0,x_0}^N)}^\frac{d-4}{2}(\mathcal{F}_{t_0,x_0}^N[u_2])^\frac{6-d}{4}.
	\end{align*}
	For $III_2$, 
	\begin{align*}
		III_2 &\leq N\sum_{2^m>  N^{-C_d}} \left\|\sum_{(k,l)\in \mathscr{D}^{\omega_2}_{m}}X_k(\omega_1)e^{i(t-t_0)\langle\nabla\rangle} W_{k,\tilde{l},t_0}^{\omega_2}\right\|_{L^\infty_{t,x}(K_{t_0,x_0}^N)}\\
		&\quad\quad \times \|u_1\|_{L^\infty_tL^2_x(K^N_{t_0,x_0})}\|u_2\|_{L^\infty_tL_x^\frac{8}{d-2}(K_{t_0,x_0}^N)}^\frac{4}{d-2}\\
		&\leq W(\omega)N^{1+2d\delta}\sum_{2^m>  N^{-C_d}}2^m(\mu_m^{\omega_2})^{\frac{1}{2}}\\
		&\quad\quad \times \|u_1\|_{L^\infty_tL^2_x(K^N_{t_0,x_0})}\|u_2\|_{L^\infty_tL_x^2(K_{t_0,x_0}^N)}^\frac{d-4}{2}\|u_2\|^\frac{d(6-d)}{2(d-2)}_{L^\infty_tL^\frac{2d}{d-2}_x(K_{t_0,x_0}^N)}.
	\end{align*}
	By the choice of $\mu_m^{\omega_2}\lesssim N^{\frac{d-6}{4}}\#\mathscr{A}_m^{\omega_2}$ in Lemma \ref{decomposebushandremainder}, $\#\mathscr{A}_{m}^{\omega}\lesssim N^{2d}$,
	we obtain
	\begin{align*}
		II_2 &\lesssim \sum_{2^m>  N^{-C_d}}\left(W_R(\omega)N^{1-L}2^m \#\mathscr{A}_m^{\omega_2} + N^{\frac{d+2}{8}+2d\delta}W(\omega)2^m(\#\mathscr{A}_{m}^{\omega_2})^\frac{1}{2}\right)\\
		& \quad \quad\times \|u_1\|_{L^\infty_tL^2_x(K^N_{t_0,x_0})}\|w_2\|_{L^\infty_tL_x^2(K_{t_0,x_0}^N)}^\frac{d-4}{2}\\
		&\quad\quad\times\left(\|u_2\|^\frac{d(6-d)}{2(d-2)}_{L^\infty_tL^\frac{2d}{d-2}_x(K_{t_0,x_0}^N)}+(\mathcal{F}_{t_0,x_0}^N[u_2])^\frac{6-d}{4}\right)\\
		&\leq \tilde{C}(\omega,\delta)N^{\frac{d+2}{8}+2d\delta} \sum_{2^m>  N^{-C_d}}2^m(\#\mathscr{A}_m^{\omega_2})^\frac{1}{2}\\
		& \quad \quad\times \|u_1\|_{L^\infty_tL^2_x(K^N_{t_0,x_0})}\|u_2\|_{L^\infty_tL_x^2(K_{t_0,x_0}^N)}^\frac{d-4}{2}\\
		&\quad\quad\times\left(\|u_2\|^\frac{d(6-d)}{2(d-2)}_{L^\infty_tL^\frac{2d}{d-2}_x(K_{t_0,x_0}^N)}+(\mathcal{F}_{t_0,x_0}^N[w_2])^\frac{6-d}{4}\right).
	\end{align*}
	where $\tilde{C}(\omega,\delta) = C(\omega, L(\delta))+W(\omega)$ by choosing $L$ large enough (relies on $\delta$). If $\#\mathscr{A}_m^{\omega_2} \neq 0$, by (\ref{mindquality}),
	\begin{align*}
		2^m\leq \|W_{k,\tilde{l},t_0}^{\omega_2}\|_{L^2}\lesssim\|f_k^{\omega_2}\|_{L^2}\lesssim N^{-s}h(\omega_2).
	\end{align*}
	also
	\begin{align*}
	    \sum_{m\in \mathbb{Z}}2^{2m}\#\mathscr{A}_{m}^{\omega_2}&\leq \sum_{k\in K_N,\tilde{l}}\|W^{\omega_2}_{k,\tilde{l},t_0}\|_{L^2}^2
	    \lesssim \sum_{k\in K_N,\tilde{l}}\|\varphi_{\tilde{l}}{f}^{\omega_2}_{k,t_0}\|_{L^2}^2\\
	    &\lesssim \sum_{k\in K_N}\|{f}^{\omega_2}_{k,t_0}\|_{L^2}^2\lesssim\sum_{k\in K_N}\|f_k^{\omega_2}\|_{L^2}^2\lesssim N^{-2s}{h}(\omega_2)^2.
	\end{align*}
	$\#\{m: N^{-C_d}\leq 2^m\lesssim N^{-s}h(\omega_2)\}\lesssim \log N + \log \langle h(\omega_2)\rangle$.  By  Cauchy-Schwarz inequality
	\begin{align*}
		II_2 &\lesssim \tilde{C}(\omega,\delta)(\log N+\log\langle h(\omega_2)\rangle){h}(\omega_2)N^{-s+\frac{d+2}{8}+2d\delta}\\
		&\quad\quad \times \|u_1\|_{L^\infty_tL^2_x(K^N_{t_0,x_0})}\|u_2\|_{L^\infty_tL_x^2(K_{t_0,x_0}^N)}^\frac{d-4}{2}\\
		&\quad\quad\times\left(\|u_2\|^\frac{d(6-d)}{2(d-2)}_{L^\infty_tL^\frac{2d}{d-2}_x(K_{t_0,x_0}^N)}+(\mathcal{F}_{t_0,x_0}^N[u_2])^\frac{6-d}{4}\right).
	\end{align*}
	
	Combining the estimates for $I_1, II_1$, we obtain Proposition \ref{difficultest},
	with $C(\omega) \lesssim C(\omega,L)+C_1(\omega)+\tilde{C}(\omega,\delta)(\log N+\log\langle h(\omega_2)\rangle){h}(\omega_2)$. $C(\omega)<\infty$, $a.e.~\omega$.
    \end{proof}
	
	\section*{Appendix}
	\begin{proof}[\textbf{Proof of Lemma \ref{decomposebushandremainder}}]
	For the decomposition of $\mathscr{A}_{m}^{\omega_2}$, the basic idea is to extract bushes $\mathscr{B}_{j,m}^{\omega_2}$ by a greedy algorithm. We omit the argument. See Proposition 4.3 in \cite{Bringmann1} for the detailed argument. We mainly show the finiteness of $W(\omega)$.
	 
	Define
	\begin{equation*}
		B_{t_0,x_0,j,m,N}(\omega):=\frac{\chi_{j\leq J_{j,m}^{\omega_2}}}{2^{m}(\#\mathscr{B}_{j,m}^{\omega_2})^{\frac{1}{2}}}\left\|\sum_{(k,\tilde{l})\in \mathscr{B}^{\omega_2}_{j,m}}X_k(\omega_1)e^{i(t-t_0)\langle\nabla\rangle} W_{k,\tilde{l},t_0}^{\omega_2}\right\|_{L^\infty_{t,x}}.
	\end{equation*}
	\begin{equation*}
		D_{t_0,x_0,m,N}(\omega):=\frac{1}{2^{m}(\mu_m^{\omega_2})^{\frac{1}{2}}}\left\|\sum_{(k,\tilde{l})\in \mathscr{D}^{\omega_2}_{m}}X_k({\omega_1})e^{i(t-t_0)\langle\nabla\rangle} W_{k,\tilde{l},t_0}^{\omega_2}\right\|_{L^\infty_{t,x}(K^N_{t_0,x_0})}.
	\end{equation*}
	Define $\tilde{D}^K_{t_0,x_0,m,N}(\omega) = D_{t_0,x_0,m,N}(\omega)\chi_{{h}(\omega_2)\leq K}$, where $h$ is defined by (\ref{mindquality}).

	\begin{proA}\label{wavepacketfinallem}
		$B_{t_0,x_0,j,m,N}, \tilde{D}_{t_0,x_0,m,N}^K$ are sub-Gaussion random functions, and
		\begin{equation}\label{subgaussian}
			\|B_{t_0,x_0,j,m,N}\|_{\Psi}\leq CN^{d\delta},
		\end{equation}
		\begin{equation}\label{subgaussianr}
			\|\tilde{D}_{t_0,x_0,m,N}^K\|_{\Psi}\leq CKN^{d\delta},
		\end{equation}
		where $C$ is independent to $t_0,x_0,j,m,N$.
	\end{proA}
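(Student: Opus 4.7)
The plan is to reduce the $L^\infty_{t,x}$ norms appearing in $B_{t_0,x_0,j,m,N}$ and $\tilde D^K_{t_0,x_0,m,N}$ to $L^q_{t,x}$ norms for a large but finite exponent $q$, since only in $L^q$ can the $L^p_\omega$ norm be efficiently commuted with the space--time norm by Minkowski. The sum $\sum_{(k,\tilde l)\in \mathscr{B}_{j,m}^{\omega_2}}X_k(\omega_1)e^{i(t-t_0)\langle\nabla\rangle}W_{k,\tilde l,t_0}^{\omega_2}$ has spatial frequency in $|\xi|\lesssim N$, so Bernstein gives $\|\cdot\|_{L^q_tL^\infty_x}\lesssim N^{d/q}\|\cdot\|_{L^q_{t,x}}$ and Lemma~\ref{inftytofine} gives $\|\cdot\|_{L^\infty_tL^q_x}\lesssim N^{1/q}\|\cdot\|_{L^q_{t,x}}$, hence $\|\cdot\|_{L^\infty_{t,x}}\lesssim N^{(d+1)/q}\|\cdot\|_{L^q_{t,x}}$. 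I would fix $q$ so large that $(d+2)/q\leq d\delta$; WLOG $p\geq q$, so Minkowski lets me commute $L^p_{\omega_1}$ inside $L^q_{t,x}$ and then condition on $\omega_2$.

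With $\omega_2$ fixed, regroup the sum as $\sum_k X_k(\omega_1)\Phi_k(t,x)$ with $\Phi_k:=\sum_{\tilde l:(k,\tilde l)\in \mathscr{B}_{j,m}^{\omega_2}}e^{i(t-t_0)\langle\nabla\rangle}W^{\omega_2}_{k,\tilde l,t_0}$, and apply Khinchin pointwise to produce a factor $p^{1/2}$ times the square function $\bigl(\sum_k|\Phi_k|^2\bigr)^{1/2}$. A second Minkowski (valid since $q\geq 2$) reduces this to $\bigl(\sum_k\|\Phi_k\|_{L^q_{t,x}}^2\bigr)^{1/2}$, and for each $k\in K_N$ the refined Strichartz estimate (Lemma~\ref{Refinstr}) gives $\|\Phi_k\|_{L^q_{t,x}}\lesssim N^{1/q}\bigl\|\sum_{\tilde l:(k,\tilde l)\in\mathscr{B}^{\omega_2}_{j,m}}W^{\omega_2}_{k,\tilde l,t_0}\bigr\|_{L^2}$. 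Phase-space almost orthogonality from Lemmas~\ref{mismatch} and~\ref{equivalentnorm} yields $\bigl\|\sum_{\tilde l}W^{\omega_2}_{k,\tilde l,t_0}\bigr\|_{L^2}^2\lesssim 2^{2m}\#\{\tilde l:(k,\tilde l)\in\mathscr{B}^{\omega_2}_{j,m}\}$, and summing in $k$ produces $\#\mathscr{B}^{\omega_2}_{j,m}$; after dividing by the normalization $2^m(\#\mathscr{B}^{\omega_2}_{j,m})^{1/2}$ one obtains (\ref{subgaussian}).

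For $\tilde D^K$ the same Bernstein--Minkowski--Khinchin skeleton applies, but the naive square-function bound only yields $(\#\mathscr{D}^{\omega_2}_m)^{1/2}$, whereas the sharper $(\mu^{\omega_2}_m)^{1/2}$ is required, and this is the main obstacle. To recover the right geometric factor I would exploit the non-concentration property in Lemma~\ref{decomposebushandremainder}: any $N^\delta$-cube meets fewer than $\mu^{\omega_2}_m$ tubes $T_{k,\tilde l}$. Combined with the non-stationary decay of $e^{i(t-t_0)\langle\nabla\rangle}W^{\omega_2}_{k,\tilde l,t_0}$ away from its tube (the principle underlying Lemma~\ref{outdecay}), at each fixed $(t,x)\in K^N_{t_0,x_0}$ only $\lesssim\mu^{\omega_2}_m$ packets contribute meaningfully and the off-tube tails decay rapidly in $N$. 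Discretizing $(t,x)$ to a grid of mesh $N^{-A}$ for some large $A=A(\delta)$, using the sub-Gaussian tail of Khinchin (not merely $L^2$) to perform a uniform union bound over the grid, and passing back to the continuum by Bernstein, should yield (\ref{subgaussianr}) with the prefactor $N^{d\delta}$. The truncation $\chi_{h(\omega_2)\leq K}$ is what keeps $\mu^{\omega_2}_m$ and the individual $L^2$ sizes of the $W^{\omega_2}_{k,\tilde l,t_0}$ uniformly bounded in $\omega_2$, which is essential for the supremum hidden in the sub-Gaussian norm to remain finite.
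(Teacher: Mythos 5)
Your overall skeleton — reduce $L^\infty_{t,x}$ to $L^q_{t,x}$ by Bernstein and Lemma~\ref{inftytofine}, commute $L^p_{\omega_1}$ inside by Minkowski, apply Khinchin conditionally on $\omega_2$, reduce the square function to $L^2$ by refined Strichartz, and finally exploit the bush/non-concentration geometry — is exactly the paper's strategy. Two points, however, need attention.

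For $B_{t_0,x_0,j,m,N}$ you claim that almost-orthogonality gives $\bigl\|\sum_{\tilde l}W^{\omega_2}_{k,\tilde l,t_0}\bigr\|_{L^2}^2\lesssim 2^{2m}\#\mathscr{B}_{j,m}^{\omega_2}(k)$ and then sum in $k$ to get $\#\mathscr{B}_{j,m}^{\omega_2}$. This is a genuinely different route, and it would actually yield the sharper bound $N^{d\delta/2}$. But the stated almost-orthogonality is not a free consequence of Lemmas~\ref{mismatch} or \ref{equivalentnorm}: writing $W^{\omega_2}_{k,\tilde l,t_0}=\tilde P_k(\varphi_{\tilde l}g)$ with $g=f_{k,t_0}^{\omega_2}$, the easy chain $\|\sum_{\tilde l\in S}W\|_{L^2}\le\|\sum_{\tilde l\in S}\varphi_{\tilde l}g\|_{L^2}\lesssim\bigl(\sum_{\tilde l\in S}\|\varphi_{\tilde l}g\|_{L^2}^2\bigr)^{1/2}$ needs the reverse inequality $\|\varphi_{\tilde l}g\|_{L^2}\lesssim\|\tilde P_k(\varphi_{\tilde l}g)\|_{L^2}$ uniformly in $\tilde l$, and the cross-term bound $|\langle W_{k,\tilde l},W_{k,\tilde l'}\rangle|\lesssim_L\langle\tilde l-\tilde l'\rangle^{-L}\|g\|_{L^2}^2$ from Lemma~\ref{mismatch} has the wrong normalization: $\|g\|_{L^2}$ can be much larger than $2^m$, so this does not reduce to $\langle\tilde l-\tilde l'\rangle^{-L'}2^{2m}$ without further argument. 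The paper sidesteps the issue entirely: it uses the crude triangle inequality $\|\sum_{\tilde l}W\|_{L^2}\le\sum_{\tilde l}\|W\|_{L^2}\lesssim 2^m\#\mathscr{B}_{j,m}^{\omega_2}(k)$ together with the \emph{bushiness} bound $\#\mathscr{B}_{j,m}^{\omega_2}(k)\lesssim N^{d\delta}$ (every tube in the bush meets $2Q$ for a fixed $N^\delta$-cube $Q$, so for fixed $k$ the tube direction is pinned and there are $\lesssim N^{d\delta}$ compatible $\tilde l$), and then splits $\|\#\mathscr{B}(k)\|_{\ell^2_k}\le\sup_k(\#\mathscr{B}(k))^{1/2}\cdot(\sum_k\#\mathscr{B}(k))^{1/2}\lesssim N^{d\delta/2}(\#\mathscr{B}_{j,m}^{\omega_2})^{1/2}$. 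This geometric fact is the real input; you neither state nor use it in the $B$ estimate, and without it the triangle-inequality fallback would not close.

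For $\tilde D^K_{t_0,x_0,m,N}$ you identify the correct ideas (Lemma~\ref{decomposebushandremainder} non-concentration, off-tube decay, union bound over a grid) but leave the execution as ``should yield'' — and the combination is precisely where the work lies. The paper partitions $I^N_{t_0}\times Q(x_0,CN)$ into cubes $Q$ of side $N^\delta$; for each $Q$ it splits $\mathscr{D}_m^{\omega_2}(k)=\mathscr{S}_1(k)\cup\mathscr{S}_2(k)$ into tubes meeting $2Q$ and tubes far from $Q$; bounds the near part in $L^\infty_{t,x}(Q)\ell^2_k$ by $2^m\sup_k(\#\mathscr{S}_1(k))^{1/2}(\sum_k\#\mathscr{S}_1(k))^{1/2}\lesssim 2^m N^{d\delta/2}(\mu_m^{\omega_2})^{1/2}$, using both $\#\mathscr{S}_1(k)\lesssim N^{d\delta}$ and the non-concentration $\sum_k\#\mathscr{S}_1(k)\le\mu_m^{\omega_2}$; bounds the far part by the non-stationary decay, where the truncation $\chi_{h(\omega_2)\le K}$ controls $\|f^{\omega_2}_k\|_{\ell^2_k L^2}$; and deals with $|x-x_0|\gtrsim N$ by decay. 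A pointwise union bound over a mesh-$N^{-1}$ grid is morally equivalent to the $L^\infty\to L^q$ reduction via Bernstein (both localize to scale $\sim 1$ and then cost only a log), but you should still show the sup-times-sum splitting of $\|\#\mathscr{S}_1(k)\|_{\ell^2_k}$ — this is exactly where $(\mu_m^{\omega_2})^{1/2}$ (instead of $(\#\mathscr{D}_m^{\omega_2})^{1/2}$) appears, and it is the heart of the lemma.

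Finally, note that sub-Gaussianity of the supremum over the grid costs $\log^{1/2}$ of the number of grid points in the $\Psi$-norm (not a power of $N$), which is indeed absorbable into $N^{d\delta}$; this is fine, but it is worth stating explicitly since you rely on the $\Psi$-norm rather than the $L^1$ norm.
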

%
%
	\begin{proof}[\textbf{Proof of Proposition \ref{wavepacketfinallem}}]	
    Given $k\in K_N$, denote $\mathscr{B}_{j,m}^{\omega_2}(k):=\{ \tilde{l}:(k,\tilde{l})\in \mathscr{B}_{j,m}^{\omega_2} \}$. Since there exists a cube $Q$ with length $N^\delta$, such that $T_{k,\tilde{l}}\cap 2Q\neq \Phi, ~\forall~(k,\tilde{l})\in \mathscr{B}_{j,m}^{\omega_2}$, thus $\#\mathscr{B}_{j,m}^{\omega_2}(k)\lesssim N^{\delta d}$. Choose $q$, such that $\frac{d+2}{q}= \frac{d\delta}{2}$. For $p\geq q$, we have
	\begin{align*}
	    &\quad \|B_{t_0,x_0,j,m,N}(\omega)\|_{L^p(\Omega)}\\
	    &\lesssim N^{\frac{d+1}{q}}\left\|\frac{1}{2^{m}(\#\mathscr{B}_{j,m}^{\omega_2})^{\frac{1}{2}}}\left\|\sum_{(k,\tilde{l})\in \mathscr{B}^{\omega_2}_{j,m}}X_k(\omega_1)e^{i(t-t_0)\langle\nabla\rangle} W_{k,\tilde{l},t_0}^{\omega_2}\right\|_{L^q_{t,x}}\right\|_{L^p_{\omega_1,\omega_2}}\\
	    &\lesssim_q p^\frac{1}{2} N^{\frac{d+1}{q}}\left\|\frac{1}{2^{m}(\#\mathscr{B}_{j,m}^{\omega_2})^{\frac{1}{2}}}\left\|\sum_{\tilde{l}\in \mathscr{B}_{j,m}^{\omega_2}(k)}e^{i(t-t_0)\langle \nabla \rangle }W^{\omega_2}_{{k,\tilde{l},t_0}}\right\|_{l^2_{k\in K_N}}\right\|_{L^p_{\omega_2}L^q_{t,x}}\\
	    &\lesssim_q p^\frac{1}{2}N^\frac{d+2}{q}\left\|\frac{1}{2^{m}(\#\mathscr{B}_{j,m}^{\omega_2})^{\frac{1}{2}}}\left\|\sum_{\tilde{l}\in \mathscr{B}_{j,m}^{\omega_2}(k)}\left\|W^{\omega_2}_{{k,\tilde{l},t_0}}\right\|_{L^2}\right\|_{l^2_{k\in K_N}}\right\|_{L^p_{\omega_2}}\\
	    &\lesssim_q p^\frac{1}{2} N^\frac{d+2}{q}\left\|\frac{\|\#\mathscr{B}_{j,m}^{\omega_2}(k)\|_{l^2_{k\in K_N}}}{(\mathscr{B}_{j,m}^{\omega_2})^\frac{1}{2}}\right\|_{L^p_{\omega_2}}\\
	    &\lesssim_q p^\frac{1}{2}N^{\frac{d+2}{q}+\frac{d\delta}{2}}\lesssim p^\frac{1}{2}N^{d\delta}.
	\end{align*}
	For $p< q$, it is easy to obtain the estimate by H\"{o}lder inequality. Thus, we have $(\ref{subgaussian})$. The constant relies on $d,\delta$ only.
	
    For (\ref{subgaussianr}), similar to the former argument, denote $I_{t_0}^N: = [t_0,t_0+N]$, $\mathscr{D}_{m}^{\omega_2}(k):= \{\tilde{l}:(k,\tilde{l})\in \mathscr{D}_{m}^{\omega_2} \}$, $|\tilde{l}-x_0|\lesssim N$, thus $\#\mathscr{D}_{m}^{\omega_2}(k)\lesssim N^d$. For $2<q<\infty$,
    \begin{align*}
    	&\quad\|\tilde{D}_{t_0,x_0,j,m,N}^K\|_{L^p(\Omega)}\\
    	&\lesssim N^\frac{d+1}{q}\left\|\frac{\chi_{{h}(\omega_2)\leq K}}{2^{m}(\mu_m^{\omega_2})^{\frac{1}{2}}}\left\|\sum_{(k,\tilde{l})\in \mathscr{D}^{\omega_2}_{m}}X_k(\omega_1)e^{i(t-t_0)\langle\nabla\rangle} W_{k,\tilde{l},t_0}^{\omega_2}\right\|_{L^q_{t,x}(I_{t_0}^N\times \mathbb{R}^d)}\right\|_{L^p_{\omega_1,\omega_2}}\\
    	&\lesssim_q p^\frac{1}{2}N^{\frac{d+1}{q}}\left\|\frac{\chi_{{h}(\omega_2)\leq K}}{2^m(\mu_{m}^{\omega_2})^\frac{1}{2}}\left\|\sum_{\tilde{l}\in \mathscr{D}_m^{\omega_2}(k)}e^{i(t-t_0)\langle \nabla \rangle} W^{\omega_2}_{k,\tilde{l},t_0}\right\|_{L^q_{t,x}(I_{t_0}^N\times\mathbb{R}^d)l^2_{k\in K_N}}\right\|_{L^p_{\omega_2}}.
    \end{align*}
    To estimate the integral of $(t,x)$, we decompose 
    $$I_{t_0}^N\times \mathbb{R}^d = (I_{t_0}^N\times Q(x_0, CN))\bigsqcup (I_{t_0}^N\times Q(x_0,CN)^c),$$
    where $Q(x_0,CN)$ is the cube with center $x_0$, length $CN$. We decompose $I_{t_0}^N\times Q(x_0,CN)$ into cubes $Q$ with length $N^{\delta}$. For each cube $Q\subset I_{t_0}^N\times Q(x_0,CN)$, we decompose $\mathscr{D}_m^{\omega_2}(k) = \mathscr{S}_{1,m}^{\omega_2}(k)\cup \mathscr{S}_{2,m}^{\omega_2}(k)$, where
    \begin{equation*}
    	\mathscr{S}_{1,m}^{\omega_2}(k) = \{\tilde{l}\in \mathscr{D}_m^{\omega_2}(k): T_{k,\tilde{l}} \cap 2Q\neq \Phi \}, ~ \mathscr{S}_{2,m}^{\omega_2}(k) = \mathscr{D}_m^{\omega_2}(k)\setminus\mathscr{S}_{1,m}^{\omega_2}(k).
    \end{equation*}
    Then,
    \begin{align*}
    	&\quad\left\|\sum_{\tilde{l}\in \mathscr{S}_{1,m}^{\omega_2}(k)}e^{i(t-t_0)\langle \nabla \rangle} W^{\omega_2}_{k,\tilde{l},t_0}\right\|_{L^\infty_{t,x}(Q)l^2_{k\in K_N}}\lesssim \left\|2^m\#\mathscr{S}_{1,m}^{\omega_2}(k)\right\|_{l^2_{k\in K_N}}\\
    	&\lesssim 2^m\sup_{k}(\#\mathscr{S}_{1,m}^{\omega_2}(k))^\frac{1}{2}(\sum_{k\in K_N} \#\mathscr{S}_{1,m}^{\omega_2}(k))^\frac{1}{2}\\
    	&\lesssim 2^mN^{\frac{d\delta}{2}}(\mu_{m}^{\omega_2})^\frac{1}{2}.
    \end{align*} 
    We use the decay of $e^{i(t-t_0)\langle \nabla \rangle} W^{\omega_2}_{k,\tilde{l},t_0}$ for $\tilde{l}\in \mathscr{S}_{2,m}^{\omega_2}(k)$ in $Q$, for ${h}(\omega_2)\leq M$,
	\begin{align*}
		&\quad\left\|\sum_{\tilde{l}\in \mathscr{S}_{2,m}^{\omega_2}(k)}e^{i(t-t_0)\langle \nabla \rangle} W^{\omega_2}_{k,\tilde{l},t_0}\right\|_{L^\infty_{t,x}(Q)l^2_{k\in K_N}}\\
		&\lesssim_L2^{-m}\|N^{-\delta L}\#\mathscr{S}_{2,m}^{\omega_2}(k)\|f^{\omega_2}_{k,t_0}\|_{L^2_x}\|_{l^2_{k\in K_N}}\\
		&\lesssim_L 2^{-m}N^{-\delta 	L+d}\|f^{\omega_2}_{k}\|_{l^2_{k\in K_N}L^2_x}(\mu_m^{\omega_2})^\frac{1}{2}\\
		&\lesssim_{L}2^{-m}N^{-\delta L + d -s}M(\mu_m^{\omega_2})^\frac{1}{2}\\
		&\lesssim_{L}N^{-\delta L + d-s+C_d}M(\mu_m^{\omega_2})^\frac{1}{2}.
	\end{align*}
	Since these estimates are uniform for $Q\in I_{t_0}^N\times Q(x_0,CN)$, we obtain
	\begin{align*}
		&\quad\left\|\sum_{\tilde{l}\in \mathscr{S}_{2,m}^{\omega_2}(k)}e^{i(t-t_0)\langle \nabla \rangle} W^{\omega_2}_{k,\tilde{l},t_0}\right\|_{L^\infty_{t,x}(I_{t_0}^N\times Q(x_0,CN))l^2_{k\in K_N}}\\
		&\lesssim (2^mN^\frac{d\delta}{2}+ MN^{-L})(\mu^{\omega_2}_m)^\frac{1}{2}.
	\end{align*}
	Thus, note that $2^m>N^{-C_d}$,
    \begin{align*}
    	&\quad\left\|\frac{\chi_{{h}(\omega_2)\leq M}}{2^m(\mu_{m}^{\omega_2})^\frac{1}{2}}\left\|\sum_{\tilde{l}\in \mathscr{D}_m^{\omega_2}(k)}e^{i(t-t_0)\langle \nabla \rangle} W^{\omega_2}_{k,\tilde{l},t_0}\right\|_{L^q_{t,x}(I_{t_0}^N\times Q(x_0,CN))l^2_{k\in K_N}}\right\|_{L^p_{\omega_2}}\\
    	&\lesssim N^\frac{d+1}{q}\left\|\frac{\chi_{{h}(\omega_2)\leq M}}{2^m(\mu_{m}^{\omega_2})^\frac{1}{2}}\left\|\sum_{\tilde{l}\in \mathscr{D}_m^{\omega_2}(k)}e^{i(t-t_0)\langle \nabla \rangle} W^{\omega_2}_{k,\tilde{l},t_0}\right\|_{L^\infty_{t,x}(I_{t_0}^N\times Q(x_0,CN))l^2_{k\in K_N}}\right\|_{L^p_{\omega_2}}\\
    	&\lesssim N^\frac{d+1}{q}\left\|\frac{\chi_{{h}(\omega_2)\leq M}}{2^m(\mu_{m}^{\omega_2})^\frac{1}{2}}(2^m N^\frac{d\delta}{2}+ MN^{-L})(\mu^{\omega_2}_m)^\frac{1}{2}\right\|_{L^p_{\omega_2}}\\
    	&\lesssim N^{\frac{d+1}{q}+\frac{d\delta}{2}}+MN^{\frac{d+1}{q}-L+C_d}.
    \end{align*}
    We use the decay estimate of $e^{i(t-t_0)\langle \nabla \rangle} W^{\omega_2}_{k,\tilde{l},t_0}$  in $I_{t_0}^N\times Q(x_0,CN)^c$,
    \begin{align*}
	    &\quad\left\|\frac{\chi_{{h}(\omega_2)\leq M}}{2^m(\mu_{m}^{\omega_2})^\frac{1}{2}}\left\|\sum_{\tilde{l}\in \mathscr{D}_m^{\omega_2}(k)}e^{i(t-t_0)\langle \nabla \rangle} W^{\omega_2}_{k,\tilde{l},t_0}\right\|_{L^q_{t,x}(I_{t_0}^N\times Q(x_0,CN)^c)l^2_{k\in K_N}}\right\|_{L^p_{\omega_2}}\\
	    &\lesssim_L 2^{-m}\left\|{\chi_{{h}(\omega_2)\leq M}}\left\|\#\mathscr{D}_m^{\omega_2}(k)\|d(x,x_0)^{-L}\|_{L^q_{t,x}(V_2)}\|f_{k,t_0}^{\omega_2}\|_{L^2_x}\right\|_{l^2_{k\in K_N}}\right\|_{L^p_{\omega_2}}\\
	    &\lesssim_L 2^{-m} N^{-L+d+d+\frac{1}{q}}\left\|{\chi_{{h}(\omega_2)\leq M}}\|f^{\omega_2}_{k}\|_{l^2_{k\in K_N}L^2_x}\right\|_{L^p_{\omega_2}}\\
	    &\lesssim_{L}N^{-L+2d+\frac{1}{q}-s+C_d}M.
    \end{align*}
	Take $q$ such that $\frac{2(d+1)}{q} = \frac{d\delta}{2}$, $L = 2d+C_d$. Thus, we obtain (\ref{subgaussianr}).
	\end{proof}
	Define $B_{t_0,x_0,j,m,N}^K(\omega)=B_{t_0,x_0,j,m,N}(\omega)\chi_{{h}(\omega_2)\leq M}$
	$$W_{1,N}^K(\omega):=\sup_{2^m>N^{-C_d}}\sup_{t_0=0,N,\cdots,\lfloor N^\theta\rfloor N}\sup_{x_0\in N\mathbb{Z}^d}\sup_{j = 1, 2,\cdots,N^\frac{6-d}{4}}B_{t_0,x_0,j,m,N}^K,$$
	$$W^K_{2,N}(\omega):= \sup_{2^m>N^{-C_d}}\sup_{t_0=0,N,\cdots,\lfloor N^\theta\rfloor N}\sup_{x_0\in N\mathbb{Z}^d}D^K_{t_0,x_0,m,N}.$$
	
	$\mu_{m}^{\omega_2}\geq N^{\frac{d-6}{4}}\#\mathscr{A}_m^{\omega_2}$, thus, $\#J_{j,m}^{\omega_2}\leq \frac{\#\mathscr{A}_m^{\omega_2}}{\mu_{m}^{\omega_2}}\leq N^{\frac{6-d}{4}}$. $2^m\leq \|W_{k,\tilde{l},t_0}^{\omega_2}\|_{L^2}\lesssim \|f_k^{\omega_2}\|_{L^2}\leq N^{-s}h(\omega_2)\lesssim N^{-s}K$, thus, $\#\{m:B_{t_0,x_0,j,m,N}^K\neq 0\}\lesssim \log \frac{N^{-s}K}{N^{-C_d}}$. For fixed $N,m,t_0$, for the integral is not zero,  the set of $x_0$ is referred $X$, then
	\begin{equation*}
		\sum_{x_0\in X} \sum_{m\in \mathbb{Z}}2^{2m}\#\mathscr{A}_{m,t_0,x_0,N}^{\omega_2} \lesssim N^{-2s}{h}(\omega_2)^2\lesssim N^{-2s}K^2.
	\end{equation*} 
	Thus, $\#X\lesssim  2^{-2m} N^{-2s}K^2 \lesssim N^{2C_d-2s}K$. 
	Denote $S_{B,N}^K:= \{(m,t_0,x_0,j):B_{t_0,x_0,j,m,N}^K\neq 0\}$, $S_{D,N}^K:= \{(m,t_0,x_0): D_{t_0,x_0,m,N}^K\neq 0\}$,  then
	\begin{align*}
		\#S_{B,N}^K&\lesssim \log\frac{N^{-s}K}{N^{-C_d}}N^\theta N^{2C_d-2s}K^2N^{\frac{6-d}{4}},\\
		\#S_{D,N}^K&\lesssim \log\frac{N^{-s}K}{N^{-C_d}}N^\theta N^{2C_d-2s}K^2
	\end{align*}
	By Lemma \ref{supsubgaussianlemmma}, we obtain
	\begin{align*}
		\|W_{1,N}^K\|_{L^1}&\lesssim \log\langle\# S_{B,N}^K\rangle\sup_{(m,t_0,x_0,j)\in S_{B,N}}\|B_{t_0,x_0,j,m,N}^K\|_{\Psi}\\
		&\lesssim N^{d\delta}(\log N + \log K)\\
		\|W_{2,N}^K\|_{L^1}&\lesssim \log\langle\# S_{D,N}^K\rangle\sup_{(m,t_0,x_0)\in S_{D,N}^K}\|D_{t_0,x_0,m,N}^K\|_{\Psi}\\
		&\lesssim KN^{d\delta}(\log N + \log K).
	\end{align*}
	Thus, we obtain $W^K(\omega):=\sum_{N\in 2^{\mathbb{N}_0}}N^{-2d\delta}(W^K_{1,N}+W^K_{2,N})\in L^1(\Omega)$. Note that $W(\omega)\chi_{{h}(\omega_2)\leq K}\leq W^K(\omega)$. By ${h}(\omega_2)<\infty, W^K(\omega)<\infty, ~\forall~ K$, $a.e.$ $\omega$, we obtain $W(\omega)<\infty, ~a.e.$ $\omega$.
\end{proof}
	\begin{proof}[\textbf{Proof of Lemma \ref{fastdecay}}]
		Similar to the proof of Lemma \ref{decomposebushandremainder}, we only need to show that $W_R(\omega)\chi_{h(\omega_2)\leq K} \in 
		L^1(\Omega)$. Define
		\begin{align*}
			\tilde{B}_{t_0,x_0,j,m,N}(\omega):=\frac{\chi_{j\leq J_{j,m}^{\omega_2}}}{2^{m}\#\mathscr{B}_{j,m}^{\omega_2}}\left\|\sum_{(k,l)\in \mathscr{B}^{\omega_2}_{j,m}}X_k(\omega_1)e^{i(t-t_0)\langle\nabla\rangle} W_{k,\tilde{l},t_0}^{\omega_2}\right\|_{L^\infty_{t,x}(\tilde{T}_{j,m,\delta}^{\omega_2,c})}.
		\end{align*}
	By the argument of Lemma \ref{decomposebushandremainder}, we only need to show that for any given $L$,
	\begin{equation}\label{lll}
		\|\tilde{B}_{t_0,x_0,j,m,N}\|_{\Psi}\leq CN^{-L}.
	\end{equation}
	The constant $C$ is independent to $t_0,x_0,j,m, N$. By non-stationary argument in Lemma \ref{outdecay}, for any $L_0>0$, we have 
	$$\|e^{i(t-t_0)\langle \nabla \rangle}W_{k,\tilde{l},t_0}^{\omega}\|_{L^\infty_{t,x}(T_{j,m,\delta}^{\omega_2,c})}\lesssim N^{-\delta L_0} \|W_{k,\tilde{l},t_0}^{\omega_2}\|_{L^2}\lesssim N^{-\delta L_0}2^m.$$
	Thus,
	\begin{align*}
		\|\tilde{B}_{t_0,x_0,j,m,N}\|_{L^p}&\lesssim \left\|2^{-m}(\#\mathscr{B}_{j,m}^{\omega_2})^{-1}\sum_{(k,l)\in \mathscr{B}_{j,m}^{\omega_2}}|X_k(\omega_1)|2^mN^{-\delta L_0}\right\|_{L^p_{\omega_1,\omega_2}}\\
		&\lesssim 2^{-\delta L_0}\max_k{\|X_k\|_{L^p}}\\
		&\lesssim p^\frac{1}{2}2^{-\delta L_0}\max_k\|X_k\|_{\Psi} .
	\end{align*}
	We obtain $\|\tilde{B}_{t_0,x_0,j,m,N}\|_{\Psi}\lesssim N^{-\delta L_0}\max_{k}\|X_k\|_{\Psi}$. Choosing $L_0>\frac{L}{\delta}$, we have (\ref{lll}). Then, following the argument of almost sure finiteness of $W(\omega)$, we have $W_R(\omega)\chi_{h(\omega_2)\leq K} \in 
	L^1(\Omega)$. Thus, we obtain that $W_R(\omega)<\infty$, $a.e.$ $\omega$.
	\end{proof}
	
	\textbf{Acknowledgements:} J. Chen thanks Mingjuan Chen for her detailed  explanation of \cite{Chen_2020}, also thanks Bjoern Bringmann for detailed explanation of his randomization in \cite{Bringmann1}. Furthermore, J. Chen thanks Minjie Shan, Jia Shen and Liangchuan Wu for some helpful discussions on the wave packet decomposition, etc.
    
    \bibliographystyle{amsplain}
    \bibliography{ref}
    
    \scriptsize\textsc{Jie Chen: School of Mathematical Sciencs, Peking University, No 5. Yiheyuan Road, Beijing 100871, P.R.China}.
    
    \textit{E-mail address}: \textbf{jiechern@pku.edu.cn}
    \vspace{20pt}
    
    \scriptsize\textsc{Baoxiang Wang: School of Mathematical Sciencs, Peking University, No 5. Yiheyuan	Road, Beijing 100871, P.R.China}.
    
    \textit{E-mail address}: \textbf{wbx@pku.edu.cn}
\end{document}